\theoremstyle{plain}
\newtheorem{lemma}{Lemma}[section]
\newtheorem{theorem}[lemma]{Theorem}
\newtheorem{proposition}[lemma]{Proposition}
\newtheorem{corollary}[lemma]{Corollary}
\theoremstyle{definition}
\newtheorem{definition}[lemma]{Definition}
\newtheorem{remark}[lemma]{Remark}
\numberwithin{equation}{section}
\newcommand{\dom}{{\textrm{Dom\,}}}
\newcommand{\R}{\mathbb{R}}
\newcommand{\N}{\mathbb{N}}
\newcommand{\dive}{\text{\rm div}}
\newcommand{\supp}{\text{\rm supp}}
\newcommand{\Lip}{\mathrm{Lip}}
\newcommand{\diam}{{\rm{diam\,}}}
\newcommand{\ve}{\varepsilon}
\newcommand{\erre}{\mathbb{R}}
\newcommand{\cI}{\mathcal{I}}
\newcommand{\f}{\varphi}
\renewcommand{\r}{\varrho}
\renewcommand{\L}{\mathcal{L}}
\newcommand{\RCD}{\mathsf{RCD}}
\newcommand{\CD}{\mathsf{CD}}
\newcommand{\Geo}{{\rm Geo}}
\newcommand{\mm}{\mathfrak m}
\newcommand{\qq}{\mathfrak q}
\newcommand{\ee}{{\rm e}}
\newcommand{\sfd}{\mathsf d}
\newcommand{\Opt}{\mathrm{OptGeo}}
\newcommand{\bigslant}[2]{{\raisebox{.2em}{$#1$}\left/\raisebox{-.2em}{$#2$}\right.}}
\begin{document}

\title[Sharp geometric and functional inequalities for $\CD^*(K,N)$ spaces]
{Sharp geometric and functional inequalities \\ in metric measure spaces with lower ricci curvature bounds}
\author{Fabio Cavalletti} 
\address{Universit\`a di Pavia}
\email{fabio.cavalletti@unipv.it}

\author{Andrea Mondino}
\address{ETH - Zurich}
\email{andrea.mondino@math.uzh.ch}

\keywords{optimal transport; Ricci curvature lower bounds; metric measure spaces; Brunn-Minkowski inequality; log-Sobolev inequality; spectral gap; Sobolev inequality; Talagrand inequality}

\bibliographystyle{plain}

\begin{abstract}
For metric measure spaces satisfying the reduced curvature-dimension condition $\CD^*(K,N)$ we prove a series of sharp functional inequalities under the 
additional assumption of essentially non-branching. Examples of spaces entering this framework are (weighted) Riemannian manifolds satisfying lower Ricci curvature bounds and their measured Gromov Hausdorff limits, Alexandrov spaces satisfying lower curvature bounds and more generally $\RCD^*(K,N)$-spaces, Finsler manifolds endowed with a strongly convex norm and satisfying lower Ricci curvature bounds.

In particular we prove the  Brunn-Minkowski inequality, the $p$-spectral gap (or equivalently the $p$-Poincar\'e inequality) for any $p\in [1,\infty)$,  the log-Sobolev inequality, the Talagrand inequality and finally  the Sobolev inequality. 

All the  results are proved in a sharp form involving an upper bound on the diameter of the space; all our inequalities for essentially non-branching $\CD^*(K,N)$ spaces take the same form as the corresponding sharp ones known for a weighted  Riemannian manifold satisfying the curvature-dimension condition $\CD(K,N)$ in the sense of Bakry-\'Emery. In this sense inequalities are sharp.   We also discuss the rigidity and almost rigidity statements associated to the $p$-spectral gap. 

Finally let us mention that for essentially non-branching metric measure spaces, the local curvature-dimension condition $\CD_{loc}(K,N)$ is equivalent to the reduced curvature-dimension condition $\CD^*(K,N)$. 
Therefore we also have shown that the  sharp  Brunn-Minkowski inequality in the \emph{global} form can be deduced from the \emph{local} curvature-dimension condition, providing a step towards 
(the long-standing problem of) globalization for the curvature-dimension condition $\CD(K,N)$.
 
To conclude, some of the results can be seen as answers to open problems proposed in the Optimal Transport book of Villani \cite{Vil}. 

\end{abstract}

\maketitle


\section{Introduction}

The theory of metric measure spaces satisfying a synthetic version of lower curvature and upper dimension bounds is nowadays 
a rich and well-established theory; nevertheless some important functional and geometric inequalities are in some cases 
still not proven and in others not proven in a sharp form. 
The scope of this note is to generalize several functional inequalities known for Riemannian manifolds satisfying a lower bound on the Ricci curvature 
to the more general case of metric measure spaces satisfying the so-called curvature-dimension condition $\CD(K,N)$
as defined by Lott-Villani \cite{lottvillani:metric} and Sturm \cite{sturm:I, sturm:II}. 
More precisely our results will hold under the \emph{reduced} curvature dimension condition $\CD^*(K,N)$ introduced by Bacher-Sturm \cite{BS10}  
(which is, a priori, a weaker assumption than the classic  $\CD(K,N)$) coupled with an essentially non-branching assumption on geodesics. 
We refer  to Section \ref{Ss:geom} for the precise definitions; here let us recall that remarkable examples of essentially 
non-branching $\CD^*(K,N)$ spaces are  (weighted) Riemannian manifolds satisfying lower Ricci curvature bounds 
and their measured Gromov Hausdorff limits, Alexandrov spaces satisfying lower curvature bounds 
and more generally $\RCD^*(K,N)$-spaces, Finsler manifolds endowed with a strongly convex norm and satisfying lower Ricci curvature bounds.

\begin{remark}\label{R:intro}
To avoid technicalities in the introduction, all the results will be stated for $N>1$; nevertheless everything holds (and will be proved in the paper)  
also for  $N=1$, but in this case $\CD^{*}(K,N)$ has to be replaced by $\CD_{loc}(K,N)$. The two conditions are equivalent for $N>1$ 
and for $N=1, K \geq 0$, but in case $N=1, K< 0$ the $\CD_{loc}(K,N)$ condition is  strictly stronger (see Section  \ref{Ss:geom} for more details).
\end{remark}

Before committing a paragraph to each of the functional inequalities we will consider in this note, 
we underline that most of the proofs contained in this note are based on $L^{1}$ optimal transportation theory and in particular 
on  one-dimensional localization.  This  technique, having its roots  in a work of   Payne-Weinberger \cite{PW} and developed by Gromov-Milman \cite{GrMi}, Lov\'asz-Simonovits \cite{LoSi} and Kannan-Lov\'asz-Simonovits \cite{KaLoSi},  consists in reducing an $n$-dimensional problem to a one dimensional one  via tools of convex geometry. Recently Klartag \cite{klartag}  found an $L^1$-optimal transportation approach leading to a  generalization of these ideas to  Riemannian manifolds;   the authors  \cite{CM1}, via a careful analysis avoiding any smoothness assumption,  generalized this approach to metric measure spaces.

It is also convenient to introduce here the family of one-dimensional measures that will be used several times for  comparison: 
$$
\mathcal{F}_{K,N,D} : = \left\{ \mu \in \mathcal{P}(\R) \colon \supp(\mu) \subset [0,D], 
\,  \mu = h_{\mu} \cdot \mathcal{L}^{1}, \,h_{\mu} \in C^{2}((0,D)), \, (\R, |\cdot |, \mu) \in \CD(K,N)  \right\}, 
$$
where $(\R, |\cdot |, \mu) \in \CD(K,N)$ stands for: the metric measure space $(\R, |\cdot |, \mu)$ verifies $\CD(K,N)$ or equivalently 
$$
\left( h_{\mu}^{\frac{1}{N-1}} \right)^{''} + \frac{K}{N-1} \, h_{\mu}^{\frac{1}{N-1}} \leq 0.
$$

\subsection{Brunn-Minkowski inequality}

The celebrated Brunn-Minkowski inequality estimates from below the measure of the 
$t$-intermediate points between two given subsets $A_{0}$ and $A_{1}$ of $X$, for $t\in [0,1]$. 
For metric measure spaces satisfying the reduced curvature-dimension condition $\CD^{*}(K,N)$ 
(see Section \ref{Ss:geom} for a brief account of different versions of the curvature-dimension condition)
almost by definition for any $A_{0}, A_{1} \subset X$
\begin{equation}\label{E:BMreduced}
\mm(A_{t})^{1/N}  \geq \sigma_{K,N}^{(1-t)}(\theta)\, \mm(A_{0})^{1/N} + \sigma_{K,N}^{(t)}(\theta) \, \mm(A_{1})^{1/N},
\end{equation}
where $A_{t}$ is the set of $t$-intermediate points between $A_{0}$ and $A_{1}$, that is 
$$
A_{t} = \ee_{t} \Big(  \{ \gamma \in \Geo(X) \colon \gamma_{0} \in A_{0}, \gamma_{1} \in A_{1}   \}  \Big),
$$
(see Section \ref{S:pre} for the definition of $\ee$) $\theta$ is the minimal/maximal length of geodesics from $A_{0}$ to $A_{1}$: 
$$
\theta:= 
\begin{cases}
\inf_{(x_{0},x_{1}) \in A_{0} \times A_{1}} \sfd(x_{0},x_{1}), & \textrm{if } K \geq 0, \\
\sup_{(x_{0},x_{1}) \in A_{0} \times A_{1}} \sfd(x_{0},x_{1}), & \textrm{if } K < 0,
\end{cases}
$$
and $\sigma_{K,N}^{(t)}(\theta)$ is defined in \eqref{E:tau}. Nevertheless \eqref{E:BMreduced} is not sharp. 
Indeed if $(X,\sfd,\mm)$ is a weighted Riemannian manifold satisfying $\CD^{*}(K,N)$, then \eqref{E:BMreduced} holds but with better interpolation coefficients, 
that is with $\tau_{K,N}^{(t)}(\theta), \tau_{K,N}^{(1-t)}(\theta)$ replacing $\sigma_{K,N}^{(t)}(\theta)$ and $\sigma_{K,N}^{(1-t)}(\theta)$, respectively. 
Indeed for  a weighted Riemannian manifold the two (a priori) different definitions of $\CD^{*}(K,N)$ and $\CD(K,N)$ 
coincide and then again almost by definition \cite{sturm:II} one can obtain the improved (and sharp) Brunn-Minkowski inequality (let us mention that a direct proof of the Brunn-Minkowski inequality in the smooth setting was done earlier by  Cordero-Erausquin, McCann and Schmuckenschl\"ager  \cite{CMS}).

A first main result of this paper is to establish the  sharp inequality for essentially non-branching  $\CD^*(K,N)$ metric measure spaces.  

\begin{theorem}[Theorem \ref{T:BM}]\label{T:introBM}
Let $(X,\sfd,\mm)$ with $\mm(X)<\infty$ verify $\CD^{*}(K,N)$ for some $K,N \in \R$ and $N \in (1, \infty)$. Assume moreover $(X,\sfd,\mm)$ to be essentially non-branching.
Then it satisfies the following sharp Brunn-Minkowski inequality: 

for any $A_{0}, A_{1} \subset X$
$$
\mm(A_{t})^{1/N}  \geq \tau_{K,N}^{(1-t)}(\theta)\, \mm(A_{0})^{1/N} + \tau_{K,N}^{(t)}(\theta) \,\mm(A_{1})^{1/N},
$$
where $A_{t}$ is the set of $t$-intermediate points between $A_{0}$ and $A_{1}$
and $\theta$ the minimal/maximal length of geodesics from $A_{0}$ to $A_{1}$. 
\end{theorem}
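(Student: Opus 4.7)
The plan is to use the $L^1$-optimal transportation / one-dimensional localization technique developed by the authors in [CM1]: we disintegrate $\mm$ along the transport rays of a Kantorovich potential adapted to $A_0, A_1$, so that the global $N$-dimensional Brunn--Minkowski reduces to the sharp one-dimensional Brunn--Minkowski on each needle. Concretely, assume without loss of generality that $A_0, A_1$ are Borel with $\mm(A_i) > 0$ (by inner regularity, even compact), and consider the balanced density
$$
f \;:=\; \frac{\ind_{A_0}}{\mm(A_0)} - \frac{\ind_{A_1}}{\mm(A_1)}, \qquad \int f \, d\mm = 0.
$$
The $L^1$-localization theorem of [CM1] applied to $f$ then yields a transport set $\mathcal{T}$ with $\mm(A_i \setminus \mathcal{T})=0$, a quotient $(Q,\qq)$, and a disintegration $\mm \llcorner \mathcal{T} = \int_Q \mm_q \,\qq(dq)$ such that for $\qq$-a.e.\ $q$: (i) $\mm_q$ is concentrated on a transport geodesic $X_q$; (ii) $(X_q,|\cdot|,\mm_q)$ belongs to $\mathcal{F}_{K,N,L_q}$ for some $L_q$; and (iii) the balance $\mm_q(A_0)/\mm(A_0) = \mm_q(A_1)/\mm(A_1) =: r_q$ holds.

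On each needle, the density $h_q$ of $\mm_q$ satisfies $(h_q^{1/(N-1)})'' + \tfrac{K}{N-1} h_q^{1/(N-1)} \leq 0$, and Jacobi ODE comparison yields the sharp one-dimensional Brunn--Minkowski
$$
\mm_q(A_t \cap X_q)^{1/N} \;\geq\; \tau_{K,N}^{(1-t)}(\theta_q)\,\mm_q(A_0)^{1/N} + \tau_{K,N}^{(t)}(\theta_q)\,\mm_q(A_1)^{1/N},
$$
where $\theta_q$ is the length on $X_q$ between $A_0 \cap X_q$ and $A_1 \cap X_q$, and where the essentially non-branching hypothesis is exactly what guarantees that $\ee_t$-images of geodesics between $A_0\cap X_q$ and $A_1\cap X_q$ stay inside $A_t \cap X_q$. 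The sign-dependent monotonicity of $\theta\mapsto\tau_{K,N}^{(t)}(\theta)$ (increasing for $K\geq 0$, decreasing for $K<0$), together with the inf/sup dichotomy in the definition of $\theta$, gives $\tau_{K,N}^{(t)}(\theta_q)\geq \tau_{K,N}^{(t)}(\theta)$ in all cases. Substituting $\mm_q(A_i) = r_q\,\mm(A_i)$, raising the needle-wise inequality to the $N$-th power, and integrating against $\qq$, while using $\int_Q r_q\,\qq(dq) = \mm(A_0)^{-1}\int_Q \mm_q(A_0)\,\qq(dq) = 1$, produces
$$
\mm(A_t) \;\geq\; \int_Q \mm_q(A_t\cap X_q)\,\qq(dq) \;\geq\; \Bigl[\tau_{K,N}^{(1-t)}(\theta)\,\mm(A_0)^{1/N} + \tau_{K,N}^{(t)}(\theta)\,\mm(A_1)^{1/N}\Bigr]^N,
$$
and an $N$-th root concludes.

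The main obstacle, and what the machinery of [CM1] is designed to overcome, is the simultaneous production of a disintegration into rays with all three properties above (ray structure, one-dimensional $\CD(K,N)$ marginals, and $f$-balance) -- note in particular that the jump from the \emph{reduced} condition $\CD^*(K,N)$ to the stronger $\CD(K,N)$ on the needles is where the improvement from $\sigma$ to $\tau$ coefficients will ultimately come from, and is possible precisely because one-dimensional $\CD^*(K,N)$ and $\CD(K,N)$ coincide. Once the disintegration is granted, the remaining delicate steps are: (a) verifying $\mm(A_i \setminus \mathcal{T})=0$ so that no mass escapes the needle picture; (b) using essentially non-branching to place intermediate points on each needle inside $A_t$, preventing $A_t \cap X_q$ from being strictly smaller than the 1-D interpolation of $A_0\cap X_q$ and $A_1\cap X_q$; and (c) confirming the direction of monotonicity of $\tau$ against $\theta_q$ vs. $\theta$, which matches precisely because of the inf/sup split in the definition of $\theta$.
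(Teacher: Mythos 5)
Your overall strategy is exactly the paper's: localize along the needles generated by $f=\chi_{A_0}/\mm(A_0)-\chi_{A_1}/\mm(A_1)$, apply the one-dimensional Borell--Brascamp--Lieb inequality on each $\CD(K,N)$ needle, exploit the balance $\mm_q(A_{0,q})/\mm(A_0)=\mm_q(A_{1,q})/\mm(A_1)$, and integrate. However, your step (a) contains a genuine gap: the claim $\mm(A_i\setminus\mathcal T)=0$ is \emph{false} in general. The localization theorem only gives $f=0$ $\mm$-a.e.\ on the complement $Z$ of the transport set, and $f$ vanishes not only outside $A_0\cup A_1$ but also on $A_0\cap A_1$ whenever $\mm(A_0)=\mm(A_1)$. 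So in the case $\mm(A_0)=\mm(A_1)$ with $\mm(A_0\cap A_1)>0$, a positive-measure portion of both $A_0$ and $A_1$ can sit in $Z$, outside every needle, and your integration over $Q$ then only recovers $\int_Q r_q\,\qq(dq)=\mm(A_0\cap\mathcal T)/\mm(A_0)<1$, which breaks the final normalization.

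The paper closes this by extending the disintegration to $Z$ with degenerate needles $\hat\mm_q=\delta_q$ and checking the needle-wise inequality there by hand: for $\mm$-a.e.\ $q\in Z$ either $q\notin A_0\cup A_1$ (trivial, $0\geq 0$) or $q\in A_0\cap A_1$, in which case $q$ is itself a $t$-intermediate point and one must verify
$$
\bigl( \tau_{K,N}^{(1-t)}(\theta)+\tau_{K,N}^{(t)}(\theta) \bigr)^{N}\leq 1 .
$$
For $K\geq 0$ this follows because $\mm(A_0\cap A_1)>0$ forces $\theta=0$, whence $\tau_{K,N}^{(t)}(0)=t$; for $K<0$ it follows from the monotonicity of $K\mapsto\sigma_{K,N}^{(t)}(\theta)$ together with H\"older's inequality. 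You should add this case analysis (or restrict your argument to $\mm(A_0)\neq\mm(A_1)$ and treat the balanced case separately). Two minor further remarks: the inclusion of the one-dimensional interpolants of $A_{0,q}$ and $A_{1,q}$ into $A_t\cap X_q$ needs only that $X_q$ is a geodesic of $X$ (sub-segments of geodesics are geodesics), not essential non-branching, which is instead what makes the localization theorem available; and your monotonicity comparison of $\tau_{K,N}^{(t)}(\theta_q)$ versus $\tau_{K,N}^{(t)}(\theta)$ is correct and matches what the paper implicitly uses.
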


\begin{remark}
The remarkable feature of Theorem \ref{T:introBM} is  that the sharp Brunn-Minkowski inequality in the \emph{global} form can be deduced from the \emph{local} curvature-dimension condition, providing a step towards 
(the long-standing problem of) globalization for the curvature-dimension condition $\CD(K,N)$. For an account and for partial results  about  this problem we refer to \cite{AMSLocToGlob, BS10, cava:decomposition,  cavasturm:MCP, RajalaLocToGlob, Vil}. 
\end{remark}

\medskip

\subsection{$p$-Spectral gap}

In the smooth setting, a spectral gap inequality establishes a bound from below on the first eigenvalue of the Laplacian. 
More generally, for any $p \in  (1,\infty)$ one can define the positive real number $\lambda^{1,p}_{(X,\sfd,\mm)}$ as follows
$$
\lambda_{(X,\sfd,\mm)}^{1,p} 
: = \inf \left\{ \frac{\int_{X} |\nabla f |^{p} \, \mm}{\int_{X} |f|^{p}\,\mm} \colon f \in \Lip(X) \cap L^{p}(X,\mm), \ f \neq 0, \ \int_{X} f|f|^{p-2} \, \mm  = 0\right\},
$$
where $|\nabla f|$ is the slope (also called local Lipshitz constant) of the Lipschitz function $f$. The name is motivated by the fact that in case $(X,\sfd,\mm)$ is the m.m.s corresponding to a smooth compact Riemannian manifold then 
$\lambda^{1,p}_{(X,\sfd,\mm)}$ coincides with the first  positive eigenvalue of the problem 
$$
\Delta_p f= \lambda |f|^{p-2} f, 
$$
on $(X,\sfd,\mm)$, where $\Delta_p f:= -\dive (|\nabla f|^{p-2} \nabla f)$ is the 
so called $p$-Laplacian.

We now state  the main theorem of this paper on $p$-spectral gap inequality. 

\begin{theorem}[Theorem \ref{T:spectralgap}]\label{T:introSpectral}
Let $(X,\sfd,\mm)$ be a metric measure space satisfying $\CD^{*}(K,N)$ for some $K,N \in \R$ and $N \in (1, \infty)$ and assume moreover it is essentially non-branching. 
Let $D \in (0,\infty)$ be the diameter of $X$. 

Then for any $p \in (1,\infty)$ it holds 
$$
\lambda^{1,p}_{(X,\sfd,\mm)} \ \geq \ \lambda^{1,p}_{K,N,D}, 
$$
where $\lambda^{1,p}_{K,N,D}$ is defined by 
$$
\lambda_{K,N,D}^{1,p}: =  \inf_{\mu \in \mathcal{F}_{K,N,D}} \inf \left\{ \frac{\int_{\R} |u'|^{p}\, \mu }{\int_{\R} |u|^{p}\, \mu } : 
u \in \Lip(\R) \cap L^{p}(\mu), \, \int_{\R} u |u|^{p-2}\mu =0,\, u\neq 0  \right\}.
$$

\noindent
In other terms for any Lipschitz function $f \in L^{p}(X,\mm)$ with $\int_{X} f |f|^{p-2} \, \mm(dx) = 0$ it holds
$$
\lambda^{1,p}_{K,N,D} \, \int_{X} |f(x)|^{p}\, \mm(dx) \leq \int_{X} |\nabla f|^{p} (x) \, \mm(dx).
$$
\end{theorem}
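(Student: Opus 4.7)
The plan is to prove this via $L^1$-optimal transport localization (the needle decomposition), reducing the inequality to the one-dimensional model measures defining $\lambda^{1,p}_{K,N,D}$.

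First, let $f \in \Lip(X) \cap L^p(X,\mm)$ be non-zero with $\int_X f|f|^{p-2}\,\mm = 0$, and set $g := f|f|^{p-2} \in L^1(X,\mm)$, which has zero mean. I would apply the $L^1$-localization theorem for essentially non-branching $\CD^*(K,N)$ spaces (as developed in \cite{CM1}) to the integrable mean-zero function $g$. This yields a partition $X = T \cup Z$ (mod $\mm$-null sets), where the transport set $T$ is foliated by disjoint maximal transport rays $\{X_q\}_{q \in Q}$, together with a disintegration
$$
\mm \llcorner_T = \int_Q \mm_q \, \qq(dq),
$$
satisfying: (i) for $\qq$-a.e.\ $q$, $\mm_q$ is a probability measure concentrated on $X_q$; (ii) for $\qq$-a.e.\ $q$, the metric measure space $(X_q, \sfd, \mm_q)$ is isomorphic, via arclength parametrization, to a one-dimensional measure $\mu_q \in \mathcal{F}_{K,N,D}$ (noting that the length of $X_q$ is bounded by the diameter $D$ of $X$); (iii) $\int_{X_q} g \, \mm_q = 0$ for $\qq$-a.e.\ $q$; (iv) $g = 0$ $\mm$-a.e.\ on $Z$, which (since $p > 1$) forces $f = 0$ $\mm$-a.e.\ on $Z$, so $\int_Z |f|^p \, \mm = 0$.

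Next, for $\qq$-a.e.\ $q$, set $f_q := f \circ \gamma_q$ where $\gamma_q : [0,L_q] \to X_q$ is the arclength parametrization, $L_q \leq D$. Then $f_q \in \Lip([0,L_q])$, and by (iii), $\int f_q |f_q|^{p-2}\, \mu_q = 0$, so $f_q$ is admissible in the variational problem defining $\lambda^{1,p}_{K,N,D}$ for the measure $\mu_q \in \mathcal{F}_{K,N,D}$. Since $\gamma_q$ is a unit-speed geodesic, $|f_q'(t)| \leq |\nabla f|(\gamma_q(t))$ for a.e.\ $t$. Applying the definition of $\lambda^{1,p}_{K,N,D}$ on the needle gives
$$
\lambda^{1,p}_{K,N,D} \int_{X_q} |f|^p \, \mm_q \leq \int_{X_q} |f_q'|^p \, \mm_q \leq \int_{X_q} |\nabla f|^p \, \mm_q.
$$
Integrating this against $\qq$ and using the disintegration, along with $\int_Z |f|^p \, \mm = 0$, one obtains
$$
\lambda^{1,p}_{K,N,D} \int_X |f|^p \, \mm \leq \int_X |\nabla f|^p \, \mm,
$$
which is the claim; the bound on $\lambda^{1,p}_{(X,\sfd,\mm)}$ follows by taking the infimum over admissible $f$.

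The main obstacle is entirely packaged in the localization step: one must invoke the fact that for essentially non-branching $\CD^*(K,N)$ spaces, the conditional one-dimensional measures $\mm_q$ inherit the full (non-reduced) curvature-dimension condition $\CD(K,N)$ along each needle, so that $\mu_q$ genuinely belongs to $\mathcal{F}_{K,N,D}$ with smooth density; this is the content of \cite{CM1} (coupled with the essentially non-branching hypothesis and the $\CD^*(K,N)$ assumption). The minor technical steps — preservation of the mean-zero condition on each needle, the comparison $|f_q'| \leq |\nabla f|\circ \gamma_q$, and the negligibility of the set $Z$ — are straightforward given the localization output.
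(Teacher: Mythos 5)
Your proposal follows essentially the same route as the paper: localize via the function $f|f|^{p-2}$, use that each needle inherits the mean-zero condition and a one-dimensional $\CD(K,N)$ density, compare $|f_q'|$ with $|\nabla f|$ along the ray, and integrate the one-dimensional spectral gap inequality over the quotient. One point is misstated, though it does not break the argument: the localization of \cite{CM1} does \emph{not} produce needles whose densities are smooth elements of $\mathcal{F}_{K,N,D}$; it only gives densities $h_q$ satisfying the $(K,N)$-concavity inequality \eqref{E:curvdensmm} in the weak sense (hence locally Lipschitz, but in general not $C^2$), i.e.\ measures in the larger synthetic class $\mathcal{F}^{s}_{K,N,D}$. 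To conclude with the constant $\lambda^{1,p}_{K,N,D}$ defined over the smooth class one must additionally know that the synthetic and smooth one-dimensional constants coincide, $^s\lambda^{1,p}_{K,N,D}=\lambda^{1,p}_{K,N,D}$; this is exactly what the paper's Proposition \ref{P:sLa=La} provides via the mollification Lemma \ref{lem:approxh} (which preserves the concavity condition while smoothing, at the cost of slightly enlarging the diameter, handled by continuity of $D\mapsto\lambda^{1,p}_{K,N,D}$). With that bridge supplied, your argument is complete and coincides with the paper's proof.
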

For more about   the quantity $\lambda^{1,p}_{K,N,D}$ the reader is referred to Section \ref{Ss:modelspectral} where the model spaces are discussed in detail.
From the last formulation of the statement, it is  clear that the sharp  $p$-spectral gap above is equivalent to a sharp $p$-Poincar\'e inequality.

Let us now give a brief (and incomplete) account on the huge literature about the spectral gap. 
\\When the ambient metric measure space is a smooth Riemannian manifold equipped with the volume measure, 
the study of the first eigenvalue of the Laplace-Beltrami operator has a long history going back to Lichnerowicz \cite{Lich}, Cheeger \cite{Chee},
Li-Yau \cite{LY79}, etc. For an overview the reader can consult for instance the book by Chavel \cite{ChavEigen}, the  survey by Ledoux \cite{Led}, or  Chapter 3 in Shoen-Yau's book \cite{SYLect}, and references therein. 

We mention that the estimate of Theorem \ref{T:introSpectral} in the case $p=2$ started with 
Payne-Weinberger \cite{PW} for convex domains in $\R^n$ where diameter-improved spectral gap inequality for the 
Laplace operator was originally proved.
Later it was generalized to Riemannian manifolds with non-negative Ricci curvature by Yang-Zhong \cite{YZ}, 
and by Bakry-Qian \cite{BQ} for manifolds with densities. The generalization to arbitrary $p \in (1,\infty)$ has been proved by Valtorta \cite{Val} for $K=0 $ and Naber-Valtorta \cite{NaVal} for any $K\in \R$. 
All of these results hold for Riemannian manifolds. 

Regarding metric measure spaces,  the sharp Lichnerowitz spectral gap for $p=2$ was proved by Lott-Villani \cite{LV} under the $\CD(K,N)$ condition.  Jiang-Zhang \cite{JZ1} recently showed, still for $p=2$, that the improved version under an upper diameter bound holds for $\RCD^*(K,N)$ metric measure  spaces.   For Ricci limit spaces, in the case $K>0$ and   $D=\pi \sqrt{(N-1)/K}$, the $p$-spectral 
gap above has been recently obtained by Honda \cite{Honda} via proving  the stability of $\lambda^{1,p}$ 
under mGH convergence of compact Riemannian manifolds; this approach  was inspired by  the celebrated work of Cheeger-Colding \cite{CC3} where, in particular, it was shown the stability of $\lambda^{1,2}$ under mGH convergence. 
We also obtain the \emph{almost rigidity} for the $p$-spectral gap:  if an almost equality in the $p$-spectral gap holds, then the space must have almost maximal diameter.

\begin{theorem}[Theorem \ref{T:almostrigid}]\label{T:introalmost}
Let  $N> 1$, and $p \in (1,\infty)$ be fixed. Then for every $\ve>0$ there exists $\delta=\delta(\ve, N,p)$ such that the following holds. 

Let $(X,\sfd,\mm)$ be   an essentially non-branching metric measure space satisfying $\CD^{*}(N-1-\delta,N+\delta)$. If  $\lambda^{1,p}_{(X,\sfd,\mm)}\leq \lambda^{1,p}_{N-1,N,\pi} + \delta$, then  ${\diam}(X)\geq \pi -\ve$.
\end{theorem}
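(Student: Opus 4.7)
The plan is to argue by contradiction and compactness, reducing the statement to a strict monotonicity/continuity property of the one-dimensional model $\lambda^{1,p}_{K,N,D}$. Suppose the conclusion fails: there exist $\ve>0$, a sequence $\delta_n\downarrow 0$, and essentially non-branching $\CD^*(N-1-\delta_n, N+\delta_n)$ spaces $(X_n,\sfd_n,\mm_n)$ with
$$
\lambda^{1,p}_{(X_n,\sfd_n,\mm_n)}\ \leq\ \lambda^{1,p}_{N-1,N,\pi}+\delta_n,\qquad D_n:=\diam(X_n)<\pi-\ve.
$$

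First, I apply the sharp spectral gap Theorem \ref{T:introSpectral} to each $X_n$:
$$
\lambda^{1,p}_{(X_n,\sfd_n,\mm_n)}\ \geq\ \lambda^{1,p}_{N-1-\delta_n,\,N+\delta_n,\,D_n}.
$$
Next, since $\mathcal{F}_{K,N,D_1}\subset\mathcal{F}_{K,N,D_2}$ whenever $D_1\leq D_2$, the quantity $\lambda^{1,p}_{K,N,D}$ is non-increasing in $D$, and therefore
$$
\lambda^{1,p}_{N-1-\delta_n,\,N+\delta_n,\,D_n}\ \geq\ \lambda^{1,p}_{N-1-\delta_n,\,N+\delta_n,\,\pi-\ve}.
$$
Combining these two inequalities with the hypothesis and letting $n\to\infty$ yields
$$
\lambda^{1,p}_{N-1,N,\pi}\ \geq\ \limsup_{n\to\infty}\lambda^{1,p}_{N-1-\delta_n,\,N+\delta_n,\,\pi-\ve},
$$
provided I know the model eigenvalue is (lower semi)continuous in the parameters $(K,N)$ at $(N-1,N)$ with $D=\pi-\ve$ fixed. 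This continuity should follow from a direct argument at the level of the one-dimensional variational problem defining $\lambda^{1,p}_{K,N,D}$: uniform bounds on densities $h_\mu$ in $\mathcal{F}_{K,N,D}$ (coming from the differential inequality in the definition) give compactness of the family of model densities, and the test function class $\Lip(\R)\cap L^p(\mu)$ is stable under small perturbations of $\mu$.

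The key point is then strict monotonicity of $D\mapsto \lambda^{1,p}_{N-1,N,D}$ at $D=\pi$, namely
$$
\lambda^{1,p}_{N-1,N,\pi-\ve}\ >\ \lambda^{1,p}_{N-1,N,\pi},
$$
which would contradict the displayed limit inequality and finish the proof. To establish this strict inequality I would use the explicit analysis of the model (referenced as Section \ref{Ss:modelspectral}): the infimum defining $\lambda^{1,p}_{N-1,N,\pi}$ is attained by a density proportional to $\sin^{N-1}(\cdot)$ on $[0,\pi]$, and the corresponding extremal eigenfunction is unique up to sign and nontrivial precisely on a set of full diameter $\pi$. Any measure $\mu\in\mathcal{F}_{N-1,N,\pi-\ve}$ has support strictly inside $[0,\pi]$, so by a rescaling/truncation argument no such $\mu$ can realize the value $\lambda^{1,p}_{N-1,N,\pi}$; a standard compactness argument on the class $\mathcal{F}_{N-1,N,\pi-\ve}$, together with the continuity discussed above, upgrades this to the desired strict gap.

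The main obstacle is precisely this last step: proving the strict monotonicity (equivalently, a rigidity statement for the one-dimensional model saying that the extremal density for $\lambda^{1,p}_{N-1,N,\pi}$ is unique and has full support on $[0,\pi]$). The continuity in the parameters $(K,N)$ is technical but routine given the explicit ODE characterization of $\mathcal{F}_{K,N,D}$; the delicate point is that the extremal problem for the model is a nonlinear $p$-eigenvalue problem (for $p\neq 2$ one cannot invoke spectral theory of self-adjoint operators) so uniqueness of the extremal must be established by a more ad hoc argument, e.g.\ via the Naber--Valtorta/Valtorta gradient comparison in the one-dimensional setting.
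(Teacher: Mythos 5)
Your overall strategy (contradiction, reduce to the one--dimensional model via the sharp spectral gap, use monotonicity and continuity of $\lambda^{1,p}_{K,N,D}$ in the parameters, then appeal to strictness of the inequality $\lambda^{1,p}_{N-1,N,D}>\lambda^{1,p}_{N-1,N,\pi}$ for $D<\pi$) is the same as the paper's. The logic is sound, and the reduction you make using $\mathcal{F}_{K,N,D_1}\subset\mathcal{F}_{K,N,D_2}$ for $D_1\leq D_2$ (hence $D\mapsto\lambda^{1,p}_{K,N,D}$ non-increasing) is a clean way to collapse everything to the single diameter $\pi-\ve$; the paper instead establishes a uniform lower bound over all $D\in[0,\pi-\ve_0]$ by combining continuity on a compact interval $[D_0,2\pi]$ with the blow-up $\lambda^{1,p}_{K,N,D}\to\infty$ as $D\downarrow 0$. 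Either route works.

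The one substantive issue is in how you handle what you call the ``main obstacle.'' You treat the strict monotonicity $\lambda^{1,p}_{N-1,N,\pi-\ve}>\lambda^{1,p}_{N-1,N,\pi}$ (and the continuity of $(K,N)\mapsto\lambda^{1,p}_{K,N,D}$) as something to be re-derived by an ad hoc compactness/rigidity argument on the one-dimensional variational problem, and you flag it as the delicate point because for $p\neq 2$ one lacks spectral theory. In fact both facts are already part of Theorem~\ref{thm:La1pComp}: item (2) is exactly the joint continuity of $(K,N,D)\mapsto \lambda^{1,p}_{K,N,D}$, and item (3) states, for $K>0$, that $\lambda^{1,p}_{K,N,D}\geq\lambda^{1,p}_{K,N,\pi\sqrt{(N-1)/K}}$ with equality \emph{if and only if} $D=\pi\sqrt{(N-1)/K}$ --- which with $K=N-1$ is precisely your strict gap at $D<\pi$. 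These are the results of Matei, Valtorta and Naber--Valtorta (references \cite{Matei,Val,NaVal}), established via the ODE characterization \eqref{eq:defbarla} and the gradient comparison technique; the passage from the smooth class $\mathcal{F}_{K,N,D}$ to the synthetic class $\mathcal{F}^{s}_{K,N,D}$ is handled in Proposition~\ref{P:sLa=La}. So the ``delicate'' step you worry about is a cited ingredient, not a gap to fill. Your sketched alternative derivation also has a hidden difficulty: the infimum over $\mu\in\mathcal{F}_{K,N,D}$ need not be attained within that class (the extremal density $\sin^{N-1}$ vanishes at the endpoints), so the ``uniqueness of the extremal'' you invoke would itself require care; this is another reason to use the already-available Theorem~\ref{thm:La1pComp} rather than re-prove it.

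In short: modulo citing Theorem~\ref{thm:La1pComp} for both the continuity and the strict $D$-monotonicity instead of attempting to re-derive them, your proof is correct and follows the same line as the paper's, with a marginally cleaner handling of the small-diameter regime via the inclusion $\mathcal{F}_{K,N,D_1}\subset\mathcal{F}_{K,N,D_2}$.
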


As a consequence, by a compactness argument and using the Maximal Diameter Theorem proved recently for $\RCD^*(K,N)$ by Ketterer \cite{Ket}, we have the following $p$-Obata and almost $p$-Obata Theorems.

\begin{corollary}[$p$-Obata Theorem]\label{Corol:p-Obata}
Let $(X,\sfd,\mm)$ be   an $\RCD^* (N-1,N)$ space for some $N \geq 2,$ and let $1<p<\infty$. If  
$$\lambda^{1,p}_{(X,\sfd,\mm)}= \lambda^{1,p}_{N-1,N,\pi}(=\lambda^{1,p}(S^N)),$$
then $(X,\sfd,\mm)$ is a spherical suspension, i.e. there exists an $\RCD^*(N-2,N-1)$ space $(Y, \sfd_Y, \mm_Y)$  such that $(X,\sfd,\mm)$ is isomorphic to  $[0,\pi] \times_{\sin}^{N-1}  Y$.
\end{corollary}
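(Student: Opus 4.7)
The plan is to combine the almost rigidity statement just proved (Theorem \ref{T:almostrigid}) with Ketterer's Maximal Diameter Theorem for $\RCD^*(N-1,N)$ spaces. The strategy is first to upgrade the hypothesis on $\lambda^{1,p}$ into the sharp conclusion $\diam(X) = \pi$, and then to read off the spherical suspension structure.

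For the upper bound $\diam(X) \le \pi$, I would invoke the Bonnet--Myers type result already available in the synthetic setting: an $\RCD^*(N-1,N)$ space (in fact any $\CD^*(K,N)$ space with $K>0$) has diameter at most $\pi\sqrt{(N-1)/K}$, which here equals $\pi$. For the matching lower bound, I would fix $\ve>0$ arbitrary and let $\delta=\delta(\ve,N,p)>0$ be provided by Theorem \ref{T:almostrigid}. Since $(X,\sfd,\mm)$ is $\RCD^*(N-1,N)$, it is in particular essentially non-branching, and by monotonicity of $\CD^*$ in the parameters $(K,N)$ it also satisfies $\CD^*(N-1-\delta, N+\delta)$. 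The standing assumption $\lambda^{1,p}_{(X,\sfd,\mm)} = \lambda^{1,p}_{N-1,N,\pi}$ trivially implies $\lambda^{1,p}_{(X,\sfd,\mm)} \le \lambda^{1,p}_{N-1,N,\pi}+\delta$, so Theorem \ref{T:almostrigid} yields $\diam(X) \ge \pi - \ve$. Letting $\ve \downarrow 0$ gives $\diam(X) \ge \pi$, hence $\diam(X) = \pi$.

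At this point the proof concludes by applying Ketterer's Maximal Diameter Theorem \cite{Ket}: any $\RCD^*(N-1,N)$ space with $N \ge 2$ realizing the maximal diameter $\pi$ is isomorphic, as a metric measure space, to a spherical suspension $[0,\pi] \times^{N-1}_{\sin} Y$ over some $\RCD^*(N-2,N-1)$ space $(Y,\sfd_Y, \mm_Y)$. This is exactly the desired conclusion.

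There is no real obstacle hidden in this deduction itself: all the substantive work is absorbed into Theorem \ref{T:almostrigid} on one side and into \cite{Ket} on the other. The only minor points to check are the standard facts that $\RCD^*(N-1,N)$ implies essentially non-branching and that $\CD^*(K,N)$ is monotone in $(K,N)$ (so that the parameters can be relaxed to $(N-1-\delta, N+\delta)$). The mention of a "compactness argument" in the statement can be read as shorthand for the fact that Theorem \ref{T:almostrigid} is itself typically proved by compactness/contradiction along mGH-converging sequences; in the deduction of the corollary, however, the direct application of the quantitative almost rigidity with $\delta \to 0^+$ suffices.
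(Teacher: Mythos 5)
Your proof is correct and follows the same route as the paper: deduce $\diam(X)=\pi$ from the almost rigidity (Theorem \ref{T:almostrigid}) by letting $\ve\downarrow 0$, then apply Ketterer's Maximal Diameter Theorem. You are in fact more explicit than the paper's one-line proof in spelling out the Bonnet--Myers upper bound, the monotonicity $\CD^*(N-1,N)\Rightarrow\CD^*(N-1-\delta,N+\delta)$, and the essentially non-branching property of $\RCD^*$ spaces, but the argument is the same.
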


\begin{corollary}[Almost $p$-Obata Theorem]\label{cor:Almostp-Obata}
Let $N\geq 2$, and $p \in (1,\infty)$ be fixed. Then for every $\ve>0$ there exists $\delta=\delta(\ve, N,p)>0$ such that the following holds. 

Let $(X,\sfd,\mm)$ be   an $\RCD^* (N-1-\delta,N+\delta)$ space. If  
$$\lambda^{1,p}_{(X,\sfd,\mm)}\leq \lambda^{1,p}_{N-1,N,\pi} + \delta,$$
then there exists an $\RCD^*(N-2,N-1)$ space $(Y, \sfd_Y, \mm_Y)$  such that 
$$\sfd_{mGH}\left( (X,\sfd,\mm),  [0,\pi] \times_{\sin}^{N-1}  Y \right) \leq \ve.  $$
\end{corollary}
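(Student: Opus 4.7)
The plan is to argue by contradiction and compactness, reducing the statement to Ketterer's exact Maximal Diameter Theorem via the almost-rigidity result of Theorem \ref{T:introalmost}. Suppose the conclusion failed. Then one would produce $\ve_0>0$ and a sequence of $\RCD^*(N-1-\delta_n,N+\delta_n)$ spaces $(X_n,\sfd_n,\mm_n)$ with $\delta_n\downarrow 0$ and $\lambda^{1,p}_{(X_n,\sfd_n,\mm_n)}\leq \lambda^{1,p}_{N-1,N,\pi}+\delta_n$, for which nevertheless $\sfd_{mGH}\bigl(X_n,[0,\pi]\times^{N-1}_{\sin} Y\bigr) > \ve_0$ for every $\RCD^*(N-2,N-1)$ space $(Y,\sfd_Y,\mm_Y)$. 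The target is to build a subsequential limit that is \emph{exactly} a spherical suspension, contradicting this.

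First I would invoke Theorem \ref{T:introalmost}: applying it with parameter $1/n$ and using $\delta_n\downarrow 0$ (so that eventually $\delta_n\leq \delta(1/n,N,p)$) yields $\diam(X_n)\geq \pi-1/n$. On the other hand, the Bonnet--Myers-type bound available under $\RCD^*(K,N)$ with $K>0$ gives $\diam(X_n)\leq \pi\sqrt{(N+\delta_n-1)/(N-1-\delta_n)}$, so $\diam(X_n)\to\pi$.

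Next, after normalizing $\mm_n(X_n)=1$, the sequence lies in the mGH-precompact class of normalized $\RCD^*(K_0,N_0)$ spaces with a uniform upper diameter bound (precompactness follows from Bishop--Gromov volume comparison and Gromov's theorem). Extract a subsequence $(X_n,\sfd_n,\mm_n)\to (X_\infty,\sfd_\infty,\mm_\infty)$ in the mGH topology. By stability of $\RCD^*(K,N)$ under mGH convergence and $\delta_n\to 0$, the limit satisfies $\RCD^*(N-1,N)$; since diameters are continuous along mGH convergence, $\diam(X_\infty)=\pi$. Ketterer's Maximal Diameter Theorem then produces an $\RCD^*(N-2,N-1)$ space $(Y,\sfd_Y,\mm_Y)$ such that $X_\infty$ is isomorphic to $[0,\pi]\times^{N-1}_{\sin} Y$. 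For $n$ large one has $\sfd_{mGH}(X_n,X_\infty)<\ve_0$, contradicting the hypothesis on $X_n$.

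The conceptual crux is Theorem \ref{T:introalmost}, which promotes the spectral-gap-almost-equality into almost-maximal diameter; once that is in hand, only standard stability and compactness arguments remain. The one point deserving attention is that the \emph{Riemannian} part of the curvature-dimension condition must pass to the limit, so that Ketterer's rigidity (stated for $\RCD^*$, not merely $\CD^*$) is applicable. This is ensured by the mGH stability of the $\RCD^*(K,N)$ class together with the convergence $\delta_n\downarrow 0$, so no genuinely new difficulty appears.
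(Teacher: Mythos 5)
Your argument is correct and coincides with the paper's own proof of this corollary (contradiction, Theorem \ref{T:almostrigid} to force $\diam(X_n)\to\pi$, mGH precompactness and stability of $\RCD^*$ from Theorem \ref{thm:CompRCD}, continuity of the diameter, and Ketterer's Maximal Diameter Theorem on the limit). The extra remarks on Bonnet--Myers and on why the Riemannian part of the condition survives the limit are consistent with what the paper implicitly uses.
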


Let us mention that the classical  Obata's Theorem for $\RCD^*(K,N)$-spaces,  i.e. the version of  Corollary \ref{Corol:p-Obata} for $p=2$,  was recently obtained by Ketterer \cite{Ket2} (see also \cite{JZ1}) with different methods.

Finally we  recall that the case $p = 1$ can be attacked using the identity $h_{(X,\sfd,\mm)} = \lambda^{1,1}_{(X,\sfd,\mm)}$, where 
$h_{(X,\sfd,\mm)}$ is the so-called Cheeger isoperimetric constant, see Section \ref{SS:hKND}. 
Therefore Theorem \ref{T:introSpectral}, Theorem \ref{T:introalmost}, Corollary  \ref{Corol:p-Obata} and  Corollary \ref{cor:Almostp-Obata}
for the case  $p=1$ follow from the analogous results proved for the isoperimetric profile in \cite{CM1}. Nevertheless for reader's convenience,  the case $p=1$ will be discussed in detail  in Section \ref{S:p=1}.

\subsection{Log-Sobolev and Talagrand inequality}

Given a m.m.s. $(X,\sfd,\mm)$, we say that it supports the Log-Sobolev inequality with 
constant $\alpha>0$ if for any Lipschitz function $f:X\to [0,\infty)$ with $\int_X f(x)\,  \mm(dx)=1$ it holds
\begin{equation}\label{E:intrologsobolev}
2\alpha \int_X f \log f \, \mm \leq \int_{\{f>0\}} \frac{|\nabla f|^2}{f} \mm.
\end{equation}
The largest constant $\alpha$, such that \eqref{E:intrologsobolev} holds for any  Lipschitz function $f:X\to [0,\infty)$ with $\int_X f(x)\,  \mm(dx)=1$, 
will be called Log-Sobolev constant of $(X,\sfd,\mm)$ and denoted with $\alpha^{LS}_{(X,\sfd,\mm)}$.

Log-Sobolev inequality is already known  \cite[Theorem 30.22]{Vil} for essentially non-branching metric measure spaces satisfying $\CD(K,\infty)$ with $K >0$ with sharp constant $\alpha=K$, but it is an open problem (see for instance \cite[Open Problem 21.6]{Vil}) to get the sharp dimensional  constant $\alpha_{K,N}=\frac{KN}{N-1}$ for metric measure spaces with $N$-Ricci curvature bounded below by $K$. This is the goal of the next result.

As already done above, let us introduce the model constant for the one-dimensional case. 
Given $K\in \R$, $N\geq1$, $D\in (0,+\infty)$ we denote with $\alpha^{LS}_{K,N,D}>0$ the maximal constant $\alpha$ such that 
\begin{equation}\label{eq:intrologsobolevKND}
2\,\alpha \int_\R f \log f \, \mu \leq \int_{\{f>0\}} \frac{|f'|^2}{f} \mu, \quad \forall \mu\in {\mathcal F}_{K,N,D},
\end{equation}
for every Lipschitz $f:\R\to [0,\infty)$  with  $\int f \, \mu=1$.
  
\begin{remark}\label{rem:besta}
If $K>0$ and $D=\pi \sqrt{\frac{N-1}{K} }$, it is known that the corresponding optimal 
Log-Sobolev constant is $\frac{KN}{N-1}$ (for more details see the discussion in Section \ref{SS:Log-Sob}). 
It is an interesting open problem, that we don't address here, 
to give an explicit expression of the quantity  $\alpha^{LS}_{K,N,D}$ for general $K\in \R, N\geq 1$, $D \in (0, \infty)$. 
\end{remark}


\begin{theorem}[Sharp Log-Sobolev inequality, Theorem \ref{thm:compLS}]\label{T:introcompLS}
Let $(X,\sfd,\mm)$ be a metric measure space with  diameter $D \in (0,\infty)$ and satisfying $\CD^{*}(K,N)$ for some $K\in \R, N \in (1,\infty)$.  
Assume moreover it is essentially non-branching. 

Then  for any Lipschitz function $f:X\to [0,\infty)$ with $\int_X f\,  \mm=1$ it holds
$$
2\,\alpha^{LS}_{K,N,D} \int_X f \log f \, \mm \leq \int_{\{f>0\}} \frac{|\nabla f|^2}{f} \mm.
$$
In other terms it holds $\alpha^{LS}_{(X,\sfd,\mm)}\geq \alpha^{LS}_{K,N,D}$.
\\As a consequence, if $K>0$ and no diameter upper bound is assumed or   $D=\pi \sqrt{\frac{N-1}{K} }$, then $\alpha^{LS}_{K,N}=\frac{KN}{N-1}$ i.e.  for any Lipschitz function $f:X\to [0,\infty)$ with $\int_X f\,  \mm=1$ it holds
$$
\frac{2 KN}{N-1}  \int_X f \log f \, \mm \leq \int_{\{f>0\}} \frac{|\nabla f|^2}{f} \mm.
$$
\end{theorem}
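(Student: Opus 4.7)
The plan is to deduce the $n$-dimensional sharp Log-Sobolev inequality from its one-dimensional counterpart by means of the $L^1$-optimal transportation (one-dimensional) localization technique, developed by Klartag \cite{klartag} in the smooth setting and extended to essentially non-branching metric measure spaces by the authors in \cite{CM1}. This mirrors the reduction used above for the Brunn--Minkowski and $p$-spectral gap inequalities.

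\textbf{Step 1: localization with zero-mean balancing.} After rescaling so that $\mm$ is a probability measure, set $g := f - 1$, which is bounded, Lipschitz, and satisfies $\int_X g \, \mm = 0$. Applying the localization theorem of \cite{CM1} to $g$, one obtains a transport set $\T \subseteq X$, a quotient space $(Q,\qq)$ and a disintegration $\mm \llcorner \T = \int_Q \mm_\alpha \, \qq(d\alpha)$ such that, for $\qq$-a.e.\ $\alpha\in Q$, the conditional $\mm_\alpha$ is concentrated on a geodesic $X_\alpha$ of length $D_\alpha \leq D$, the triple $(X_\alpha, \sfd|_{X_\alpha}, \mm_\alpha)$ identifies with an element of $\mathcal F_{K,N,D_\alpha}$, and the balancing
$$\int_{X_\alpha} (f-1)\,\mm_\alpha = 0$$
holds. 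On $X\setminus \T$ one has $g=0$ $\mm$-a.e., i.e.\ $f\equiv 1$, so both sides of the Log-Sobolev inequality vanish there and this region may be discarded.

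\textbf{Step 2: one-dimensional Log-Sobolev and assembly.} Put $\tilde \mm_\alpha := \mm_\alpha / \mm_\alpha(X_\alpha) \in \mathcal F_{K,N,D_\alpha}$. The balancing reads $\int_{X_\alpha} f\, d\tilde \mm_\alpha = 1$, so by the very definition of $\alpha^{LS}_{K,N,D_\alpha}$ in \eqref{eq:intrologsobolevKND} applied to the Lipschitz function $f|_{X_\alpha}$,
$$2\,\alpha^{LS}_{K,N,D_\alpha}\int_{X_\alpha} f\log f\, d\tilde \mm_\alpha \ \leq \ \int_{X_\alpha\cap\{f>0\}}\frac{|f'|^2}{f}\, d\tilde \mm_\alpha,$$
where $f'$ is the derivative of $f$ along the geodesic $X_\alpha$. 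The family $\mathcal F_{K,N,D}$ is monotone in $D$, so $D\mapsto \alpha^{LS}_{K,N,D}$ is non-increasing and hence $\alpha^{LS}_{K,N,D_\alpha} \geq \alpha^{LS}_{K,N,D}$; moreover, since $X_\alpha$ is a geodesic and $f$ is Lipschitz, $|f'|(x)\leq |\nabla f|(x)$ along the needle. Multiplying by $\mm_\alpha(X_\alpha)$ and integrating against $\qq$ yields the global inequality. The case $K>0$ then follows either from the Bonnet--Myers bound $\diam(X)\leq \pi\sqrt{(N-1)/K}$ available under $\CD^*(K,N)$, or directly from $D=\pi\sqrt{(N-1)/K}$, combined with the explicit one-dimensional computation $\alpha^{LS}_{K,N,\pi\sqrt{(N-1)/K}} = KN/(N-1)$ deferred to Section \ref{SS:Log-Sob}.

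The main obstacle is the existence of the localization with all the required features: each $\mm_\alpha$ must be absolutely continuous along the needle with a density satisfying the one-dimensional $\CD(K,N)$ differential inequality, and the balancing $\int_{X_\alpha} g\,\mm_\alpha=0$ must hold $\qq$-a.e. In the present non-smooth, merely essentially non-branching setting, this is precisely the content of the localization theorem of \cite{CM1} under $\CD^*(K,N)$, so it is available here; once it is in hand, the rest of the argument is a routine one-dimensional-to-$n$-dimensional averaging.
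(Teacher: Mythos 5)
Your proof is correct and follows essentially the same route as the paper: reduce the mean to zero (you use $f-1$, the paper uses $1-f$; these are equivalent), invoke the localization theorem to disintegrate $\mm$ into $\CD(K,N)$ needles on which $\int f\,\mm_q=1$, apply the one-dimensional Log-Sobolev inequality with the model constant on each needle using $|f'_q|\le|\nabla f|$, and integrate over the quotient. Two small remarks: the conditional measures $\mm_q$ produced by Theorem \ref{T:localize} are already probability measures, so your normalization $\tilde\mm_\alpha=\mm_\alpha/\mm_\alpha(X_\alpha)$ is vacuous; and the needle densities $h_q$ are only locally Lipschitz (satisfying \eqref{E:curvdensmm} in the weak sense), not $C^2$, so strictly speaking one first gets the inequality with the synthetic constant $\alpha^{LS}$ defined over $\mathcal F^s_{K,N,D}$ and then identifies it with the smooth $\alpha^{LS}_{K,N,D}$ via the mollification argument of Lemma \ref{lem:approxh} (as done for the spectral gap in Proposition \ref{P:sLa=La}) — a detail the paper flags but that you leave implicit.
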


In order to state the Talagrand inequality let us recall that the relative entropy functional $Ent_\mm: {\mathcal P}(X)\to [0, +\infty]$ with respect to a given $\mm\in {\mathcal P}(X)$ is defined to be 
$$
Ent_\mm(\mu)= \int_X \varrho\, \log \varrho \, \mm, \quad \text{ if } \mu=\varrho \mm 
 $$
and $+\infty$ otherwise.
Otto-Villani \cite{OtVil} proved that for smooth Riemannian manifolds the Log-Sobolev inequality with constant $\alpha>0$ 
implies the Talagrand inequality with constant $\frac{2}{\alpha}$ preserving sharpness.  The result was then generalized to arbitrary metric measure spaces by Gigli-Ledoux \cite{GL}.

Combining this result with  Theorem \ref{T:introcompLS} we get the following corollary which improves the 
Talagrand constant $2/K$, which is sharp for $\CD(K,\infty)$ spaces, by a factor $N-1/N$ in case the dimension is bounded above by $N$. 
This constant is sharp for $\CD^*(K,N)$ (or $\CD_{loc}(K,N)$) spaces, indeed it is sharp already in the smooth setting \cite[Remark 22.43]{Vil}.  
Since both our proof of the sharp Log-Sobolev inequality and the proof of Theorem \ref{thm:LST}  
are essentially optimal transport based, the following  can be seen as an answer to \cite[Open Problem 22.44]{Vil}.

\begin{theorem}[Sharp Talagrand inequality]\label{thm:talagrand}
Let $(X,\sfd,\mm)$ be a metric measure space with diameter $D \in (0,\infty)$, satisfying $\CD^{*}(K,N)$ for some $K\in \R, N\in (1,\infty)$,  
and assume moreover it is essentially non-branching and $\mm(X)=1$. 

Then it supports the Talagrand inequality with constant $\frac{2}{\alpha^{LS}_{K,N,D}}$, 
where  $\alpha^{LS}_{K,N,D}$ was defined in \eqref{eq:intrologsobolevKND}, i.e. it holds
$$
W_2^2(\mu, \mm) \leq \frac{2}{\alpha^{LS}_{K,N,D}} Ent_\mm(\mu)  \quad \text{for all }\mu \in \mathcal{P}(X). 
$$
In particular, if $K>0$  and no upper bound on the diameter is assumed or $D=\pi \sqrt{\frac{N-1}{K} }$, then  
$$
W_2^2(\mu, \mm) \leq \frac{2(N-1)}{KN} Ent_\mm(\mu) \quad \text{for all }\mu \in \mathcal{P}(X),
$$
the constant in the last inequality being sharp.
\end{theorem}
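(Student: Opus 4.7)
The plan is to reduce the Talagrand inequality to the sharp Log--Sobolev inequality of Theorem \ref{T:introcompLS} via the Otto--Villani implication, in the metric measure space version due to Gigli--Ledoux \cite{GL}. So the strategy has essentially two ingredients: the sharp dimensional Log--Sobolev constant, which I can assume as already proved, and a general principle asserting that on sufficiently regular metric measure spaces, $\mathrm{LogSob}(\alpha)$ implies $\mathrm{Talagrand}(2/\alpha)$ with no loss.

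First I would record the setting needed to apply the Gigli--Ledoux implication. Their argument relies only on the Hopf--Lax semigroup $Q_t f(x)=\inf_y\{f(y)+\sfd(x,y)^2/(2t)\}$ satisfying the subsolution property of the Hamilton--Jacobi equation $\tfrac{d}{dt}Q_tf+\tfrac12|\nabla Q_tf|^2\leq 0$ almost everywhere, which holds in every length (in particular every geodesic) space, together with the classical Bobkov--Gentil--Ledoux chain based on differentiating $t\mapsto \tfrac{1}{\alpha t}\log\int e^{\alpha t Q_t f}\,d\mm$ and invoking the Log--Sobolev inequality applied to $e^{\alpha t Q_t f}$. Since $(X,\sfd,\mm)$ is an essentially non-branching $\CD^*(K,N)$ space with $\mm(X)=1$, it is in particular a geodesic probability space, and Theorem \ref{T:introcompLS} provides the Log--Sobolev inequality with constant $\alpha^{LS}_{K,N,D}$; these are precisely the hypotheses under which \cite{GL} obtain $\mathrm{Talagrand}(2/\alpha^{LS}_{K,N,D})$ through the Kantorovich duality characterization
\[
\tfrac12 W_2^2(\mu,\mm)=\sup\left\{\int \phi\,d\mu-\int Q_1\phi\,d\mm\right\}.
\]

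Concretely, given $\mu=\varrho\mm\in\mathcal{P}(X)$ with $Ent_\mm(\mu)<\infty$ (the remaining cases being trivial), I would fix a bounded Lipschitz $\phi$, set $f_t=e^{\alpha t Q_t\phi}/Z_t$ with $Z_t=\int e^{\alpha t Q_t\phi}\,d\mm$, and apply the sharp Log--Sobolev inequality of Theorem \ref{T:introcompLS} to $f_t$. Using $|\nabla Q_t\phi|\leq \mathrm{Lip}(\phi)$ and the Hamilton--Jacobi subsolution property, a direct computation shows that $t\mapsto \tfrac{1}{\alpha t}\log Z_t$ is non-increasing on $(0,1]$, which upon sending $t\downarrow 0$ and evaluating at $t=1$ yields
\[
\alpha^{LS}_{K,N,D}\Bigl(\int Q_1\phi\,d\mm-\int \phi\,d\mu\Bigr)+Ent_\mm(\mu)\geq 0.
\]
Taking the supremum over $\phi$ and using Kantorovich duality gives the announced Talagrand inequality with constant $2/\alpha^{LS}_{K,N,D}$.

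For the second assertion, when $K>0$ and either no diameter bound is imposed or $D=\pi\sqrt{(N-1)/K}$, I would simply invoke the explicit value $\alpha^{LS}_{K,N,D}=KN/(N-1)$ recorded in Theorem \ref{T:introcompLS} (and Remark \ref{rem:besta}); sharpness in this regime is inherited from the smooth model, since the round sphere $S^N$ (or, for the dimensionless case, the Gaussian) saturates the inequality, as noted in \cite[Remark 22.43]{Vil}. The main technical point I would expect to pin down carefully is the justification that the Hopf--Lax/Hamilton--Jacobi machinery of \cite{GL} is applicable in our generality (essentially non-branching $\CD^*(K,N)$, no a priori infinitesimal Hilbertianity): this is not an obstacle in substance, since \cite{GL} is formulated for general geodesic metric measure spaces satisfying a Log--Sobolev inequality, but it is the step that deserves the most attention in a complete write-up.
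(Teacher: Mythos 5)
Your proposal is correct and follows exactly the paper's route: the Talagrand inequality is obtained by combining the sharp Log--Sobolev inequality of Theorem \ref{thm:compLS} with the Otto--Villani implication in its metric measure space form due to Gigli--Ledoux (Theorem \ref{thm:LST}), with sharpness for $K>0$ inherited from the smooth model as in \cite[Remark 22.43]{Vil}. The only difference is that you unpack the Hopf--Lax/Hamilton--Jacobi mechanism behind the Gigli--Ledoux theorem, which the paper simply cites as a black box.
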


\subsection{Sobolev inequality}
Sobolev inequalities have been studied in many different contexts and many papers and books are devoted to this family of inequalities. 
Here we only mention two references mainly dealing with them in the Riemannian manifold case and the smooth $\CD$ condition case, 
respectively \cite{Hebey} and \cite{Led-Toul}.

We say that $(X,\sfd,\mm)$ supports a $(p,q)$-Sobolev inequality with constant $\alpha^{p,q}$ if for any $f : X \to \R$ Lipschitz function it holds
\begin{equation}\label{E:introSobolev}
\frac{\alpha^{p,q}}{p-q} \left\{ \left( \int_{X}  |f |^{p} \, \mm \right)^{\frac{q}{p}} -   \int_{X}  |f |^{q} \, \mm \right\} \leq   \int_{X}  |\nabla f |^{q} \, \mm, 
\end{equation}
and the largest constant $\alpha^{p,q}$ such that \eqref{E:introSobolev} holds for any Lipschitz function $f$ will be called the $(p,q)$-Sobolev constant 
of $(X,\sfd, \mm)$and will be denoted by $\alpha^{p,q}_{(X,\sfd,\mm)}$. 

A Sobolev inequality is known to hold for essentially non-branching m.m.s. satisfying $\CD(K,N)$, provided $K<0$, see \cite[Theorem 30.23]{Vil} and other Sobolev-type inequalities have been obtained in \cite{LV} for $\CD(K,N)$ spaces. Let us also mention \cite{Profeta} where the sharp $(2^{*},2)$-Sobolev inequality has been established for $\RCD^{*}(K,N)$-spaces, $K>0$, $N\in(2,\infty)$.
The goal here is to give a Sobolev inequality with sharp constant for essentially non-branching $\CD^*(K,N)$ spaces, $K\in \R, N> 1$,   taking  also into account an upper diameter bound.

\begin{theorem}[Sharp Sobolev inequality, Theorem \ref{thm:compL}]\label{T:introcompL}
Let $(X,\sfd,\mm)$ be a metric measure space with  diameter $D \in (0,\infty)$ and satisfying $\CD^{*}(K,N)$ for some $K\in \R, N\in (1,\infty)$.  
Assume moreover it is essentially non-branching. 

Then  for any Lipschitz function it holds
$$
\frac{\alpha^{p,q}_{K,N,D}}{p-q} \left\{ \left( \int_{X}  |f (x)|^{p} \, \mm(dx) \right)^{\frac{q}{p}} - \int_{X}  |f (x)|^{q} \, \mm(dx)   \right\} 
\leq    \int_{X}  |\nabla f (x)|^{q} \, \mm(dx), 
$$
where $\alpha^{p,q}_{K,N,D}$ is defined as the supremum among $\alpha > 0$ such that 
$$
\frac{\alpha}{p-q} \left\{ \left( \int_{X}  |f |^{p} \, \mu \right)^{\frac{q}{p}} -   \int_{X}  |f |^{q} \, \mu \right\} \leq   \int_{X}  |\nabla f |^{q} \, \mu, \ \forall \ f \in \Lip, \ \forall \ \mu \in \mathcal{F}_{K,N,D} . 
$$
In particular,  if $K>0, N>2$  and no upper bound on the diameter is assumed or $D=\pi \sqrt{\frac{N-1}{K} }$,  then  for any Lipschitz function $f$ it holds
$$
\frac{KN }{(p-2)(N-1)}  \left\{ \left( \int_{X}  |f |^{p} \, \mm \right)^{\frac{2}{p}} -   \int_{X}  |f |^{2} \, \mm \right\} \leq   \int_{X}  |\nabla f |^{2} \, \mm, 
$$
for any $2<  p \leq 2N/(N -2)$;  in other terms it holds $\alpha^{p,2}_{(X,\sfd,\mm)}\geq \frac{K N}{N-1}$.
\end{theorem}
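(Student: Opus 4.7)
The strategy is the $L^{1}$-optimal transport / one-dimensional localization machinery developed in the authors' companion paper \cite{CM1} and used throughout this work for the Brunn-Minkowski, spectral gap, log-Sobolev and Talagrand inequalities. The goal is to reduce the $n$-dimensional Sobolev inequality to a family of one-dimensional Sobolev inequalities on transport rays, and then recombine them. The delicate point is that the left-hand side contains a super-linear term $\bigl(\int f^{p}\,\mm\bigr)^{q/p}$ whose naive averaging over rays goes the wrong way under Jensen's inequality (since $q<p$, the map $x\mapsto x^{q/p}$ is concave). The whole idea is to choose the localization cleverly so that this non-linearity becomes tautologically compatible with averaging.

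First, by $|\nabla|f|| \leq |\nabla f|$ a.e.\ we may assume $f \geq 0$, and by rescaling we take $\mm$ to be a probability measure. Set $c := \int_{X} f^{p} \, \mm$ and $g := f^{p} - c$, which has zero $\mm$-mean; if $g\equiv 0$ the inequality is trivial. Now apply the localization theorem of \cite{CM1} to $g$: under the essentially non-branching $\CD^{*}(K,N)$ hypothesis, this yields a disintegration $\mm = \int_{Q} \mm_{\alpha}\, q(d\alpha)$ along geodesic transport rays $X_{\alpha}$, such that each one-dimensional m.m.s.\ $(X_{\alpha}, \sfd, \mm_{\alpha})$ satisfies $\CD(K,N)$ with diameter at most $D$ (so $\mm_{\alpha}$, up to reparameterization, belongs to $\mathcal{F}_{K,N,D}$), together with the crucial balance property
\begin{equation*}
\int_{X_{\alpha}} g \, \mm_{\alpha} = 0 \quad \text{for } q\text{-a.e.\ } \alpha, \qquad \text{i.e.} \quad \int_{X_{\alpha}} f^{p} \, \mm_{\alpha} \;\equiv\; c \;=\; \int_{X} f^{p}\,\mm.
\end{equation*}

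By the very definition of $\alpha^{p,q}_{K,N,D}$, the one-dimensional Sobolev inequality holds on each $(X_{\alpha}, \sfd|_{X_{\alpha}}, \mm_{\alpha})$ for the restriction of $f$:
\begin{equation*}
\frac{\alpha^{p,q}_{K,N,D}}{p-q}\left[\Bigl(\int_{X_{\alpha}} f^{p} \, \mm_{\alpha}\Bigr)^{q/p} - \int_{X_{\alpha}} f^{q} \, \mm_{\alpha}\right] \;\leq\; \int_{X_{\alpha}} |\nabla f|^{q} \, \mm_{\alpha},
\end{equation*}
where we use the standard fact (from the localization framework of \cite{CM1}) that the one-dimensional slope of $f$ along $X_{\alpha}$ is dominated by the restriction of the ambient slope $|\nabla f|$. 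Substituting the balance identity $\int_{X_{\alpha}} f^{p}\,\mm_{\alpha}=c$ on the left and integrating against $q(d\alpha)$ (a probability on $Q$), the terms $\int_{X_{\alpha}} f^{q}\,\mm_{\alpha}$ and $\int_{X_{\alpha}} |\nabla f|^{q}\,\mm_{\alpha}$ reassemble into $\int_{X} f^{q}\,\mm$ and $\int_{X} |\nabla f|^{q}\,\mm$, while the non-linear term $c^{q/p}$ remains exactly $\bigl(\int_{X} f^{p}\,\mm\bigr)^{q/p}$. This yields the claimed $n$-dimensional Sobolev inequality.

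The main technical obstacle is therefore not the final averaging, which is tautological once $\int_{X_{\alpha}} f^{p}\,\mm_{\alpha}$ has been forced constant across needles by the clever choice of $g$, but rather verifying that the $\CD^{*}(K,N)$ localization theorem of \cite{CM1} applies to the function $g = f^{p} - \int f^{p}\,\mm$ (integrability is automatic since $f$ is Lipschitz on a bounded space) and that each conditional density genuinely lies in $\mathcal{F}_{K,N,D}$, i.e.\ is $C^{2}$ and satisfies the pointwise differential inequality $\bigl(h_{\mm_{\alpha}}^{1/(N-1)}\bigr)'' + \tfrac{K}{N-1} h_{\mm_{\alpha}}^{1/(N-1)} \leq 0$ on the ray. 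These regularity and structural outputs are precisely what the localization results of \cite{CM1} provide, and once invoked the argument proceeds exactly along the lines of the analogous proofs for Theorem \ref{T:introSpectral} and Theorem \ref{T:introcompLS} earlier in the paper. The explicit estimate $\alpha^{p,2}_{(X,\sfd,\mm)} \geq \frac{KN}{N-1}$ in the case $K>0$, $D=\pi\sqrt{(N-1)/K}$ follows by computing (or bounding) $\alpha^{p,2}_{K,N,D}$ on the one-dimensional model densities in $\mathcal{F}_{K,N,D}$, as treated in the model-space section.
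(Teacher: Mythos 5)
Your strategy — localizing along $L^1$-optimal transport rays by applying Theorem~\ref{T:localize} to a function that forces $\int_{X_\alpha} |f|^p\,\mm_\alpha$ to be constant across needles, then summing the one-dimensional Sobolev inequalities — is precisely the argument the paper uses. Up to a harmless affine change (the paper localizes with $\hat f := 1 - c|f|^p$, $c := 1/\int |f|^p\,\mm$, while you localize with $g := f^p - \int f^p\,\mm$; both produce the same partition since one is a negative multiple of the other), your choice of auxiliary function and the observation that this makes the non-linear term $(\int f^p)^{q/p}$ constant across needles is exactly the key idea of the paper's proof.

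There is, however, one genuine omission: Theorem~\ref{T:localize} does \emph{not} disintegrate all of $\mm$ over non-degenerate geodesic needles. It writes $X = Z \cup \mathcal{T}$, with the disintegration $\mm\llcorner_{\mathcal T} = \int_Q \mm_q\,\qq(dq)$ holding only on the transport set $\mathcal T$, and the complementary set $Z$ carries no geodesic structure but satisfies $g = 0$ there $\mm$-a.e. Your statement that ``$\mm = \int_Q \mm_\alpha\, q(d\alpha)$ along geodesic transport rays'' with ``each one-dimensional m.m.s.\ satisfying $\CD(K,N)$'' is therefore not quite what the localization theorem gives, and the reassembly you describe (``integrating against $q(d\alpha)$ $\dots$ reassembles into $\int_X$'') only recovers $\int_{\mathcal T}$, not $\int_X$. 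Since $c^{q/p}$ would integrate to $\mm(\mathcal T)\,c^{q/p}$, a residual term from $Z$ must be accounted for. The fix is what the paper does: on $Z$ one has $g(y) = 0$, i.e.\ $|f(y)|^p = \int |f|^p\,\mm$, hence $|f(y)|^q = (\int |f|^p\,\mm)^{q/p}$, so the pointwise version of the Sobolev inequality on $Z$ reduces to $0 \le |\nabla f|^q(y)$ and is trivially true; integrating this over $Z$ and adding it to the contribution from $\mathcal T$ closes the argument. This is a small step, but without it the proof is incomplete.

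Two further minor remarks. First, the paper treats the case $p < q$ separately (Step 2, by rewriting the Sobolev inequality with the inequality reversed), whereas you only address the case $p > q$ implicitly; since $p - q$ appears in the denominator with a sign, the direction of all inequalities depends on this, and the argument does not literally carry over. For the final explicit statement $\alpha^{p,2}_{(X,\sfd,\mm)} \geq KN/(N-1)$ with $p>2$ only $p>q$ matters, so this does not affect the conclusion you target, but the theorem is stated for general $(p,q)$. Second, your reduction to $f \geq 0$ via $|\nabla|f|| \leq |\nabla f|$ is correct but not needed: the paper works directly with $|f|^p$ and $|f|^q$.
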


This last result can be seen as  a solution to  \cite[Open Problem 21.11]{Vil}.

\section*{Acknowledgements}
The authors  wish to thank the Hausdorff center of Mathematics of Bonn, where part of the work has been developed,  for the excellent working conditions and the stimulating atmosphere during the trimester program ``Optimal Transport''.  A.M. is partly supported by the Swiss National Science Foundation.

\section{Prerequisites}\label{S:pre}

In what follows we say that a triple $(X,\sfd, \mm)$ is a metric measure space, m.m.s. for short, 
if $(X, \sfd)$ is a complete and separable metric space and $\mm$ is positive Radon measure over $X$. 
For this note we will only be concerned with m.m.s. with $\mm$ probability measure, that is $\mm(X) =1$, or at most with $\mm(X)<\infty$ which will be reduced to the probability case by a constant rescaling. 
The space of all Borel probability measure over $X$ will be denoted with $\mathcal{P}(X)$.

A metric space is a geodesic space if and only if for each $x,y \in X$ 
there exists $\gamma \in \Geo(X)$ so that $\gamma_{0} =x, \gamma_{1} = y$, with
$$
\Geo(X) : = \{ \gamma \in C([0,1], X):  \sfd(\gamma_{s},\gamma_{t}) = (s-t) \sfd(\gamma_{0},\gamma_{1}), s,t \in [0,1] \}.
$$
Recall that for complete geodesic spaces local compactness is equivalent to properness (a metric space is proper if every closed ball is compact).
We directly assume the ambient space $(X,\sfd)$ to be proper. Hence from now on we assume the following:
the ambient metric space $(X, \sfd)$ is geodesic, complete, separable and proper and $\mm(X) = 1$.

\medskip

We denote with $\mathcal{P}_{2}(X)$ the space of probability measures with finite second moment  endowed with the $L^{2}$-Wasserstein distance  $W_{2}$ defined as follows:  for $\mu_0,\mu_1 \in \mathcal{P}_{2}(X)$ we set
\begin{equation}\label{eq:Wdef}
  W_2^2(\mu_0,\mu_1) = \inf_{ \pi} \int_X \sfd^2(x,y) \, \pi(dxdy),
\end{equation}
where the infimum is taken over all $\pi \in \mathcal{P}(X \times X)$ with $\mu_0$ and $\mu_1$ as the first and the second marginal.
Assuming the space $(X,\sfd)$ to be geodesic, also the space $(\mathcal{P}_2(X), W_2)$ is geodesic. 

Any geodesic $(\mu_t)_{t \in [0,1]}$ in $(\mathcal{P}_2(X), W_2)$  can be lifted to a measure $\nu \in {\mathcal {P}}(\Geo(X))$, 
so that $({\rm e}_t) \, \sharp \, \nu = \mu_t$ for all $t \in [0,1]$. 
Here for any $t\in [0,1]$,  ${\rm e}_{t}$ denotes the evaluation map: 
$$
  {\rm e}_{t} : \Geo(X) \to X, \qquad {\rm e}_{t}(\gamma) : = \gamma_{t}.
$$

Given $\mu_{0},\mu_{1} \in \mathcal{P}_{2}(X)$, we denote by 
$\Opt(\mu_{0},\mu_{1})$ the space of all $\nu \in \mathcal{P}(\Geo(X))$ for which $({\rm e}_0,{\rm e}_1) \, \sharp\, \nu$ 
realizes the minimum in \eqref{eq:Wdef}. If $(X,\sfd)$ is geodesic, then the set  $\Opt(\mu_{0},\mu_{1})$ is non-empty for any $\mu_0,\mu_1\in \mathcal{P}_2(X)$.
It is worth also introducing the subspace of $\mathcal{P}_{2}(X)$
formed by all those measures absolutely continuous with respect to $\mm$: it is denoted by $\mathcal{P}_{2}(X,\sfd,\mm)$.


\subsection{Geometry of metric measure spaces}\label{Ss:geom}
Here we briefly recall the synthetic notions of lower Ricci curvature bounds, for more detail we refer to  \cite{BS10,lottvillani:metric,sturm:I, sturm:II, Vil}.

In order to formulate curvature properties for $(X,\sfd,\mm)$ we introduce the following distortion coefficients: given two numbers $K,N\in \erre$ with $N\geq1$, we set for $(t,\theta) \in[0,1] \times \erre_{+}$,
\begin{equation}\label{E:sigma}
\sigma_{K,N}^{(t)}(\theta):= 
\begin{cases}
\infty, & \textrm{if}\ K\theta^{2} \geq N\pi^{2}, \crcr
\displaystyle  \frac{\sin(t\theta\sqrt{K/N})}{\sin(\theta\sqrt{K/N})} & \textrm{if}\ 0< K\theta^{2} <  N\pi^{2}, \crcr
t & \textrm{if}\ K \theta^{2}<0 \ \textrm{and}\ N=0, \ \textrm{or  if}\ K \theta^{2}=0,  \crcr
\displaystyle   \frac{\sinh(t\theta\sqrt{-K/N})}{\sinh(\theta\sqrt{-K/N})} & \textrm{if}\ K\theta^{2} \leq 0 \ \textrm{and}\ N>0.
\end{cases}
\end{equation}

We also set, for $N\geq 1, K \in \R$ and $(t,\theta) \in[0,1] \times \erre_{+}$
\begin{equation} \label{E:tau}
\tau_{K,N}^{(t)}(\theta): = t^{1/N} \sigma_{K,N-1}^{(t)}(\theta)^{(N-1)/N}.
\end{equation}

As we will consider only the case of essentially non-branching spaces, we recall the following definition. 
\begin{definition}\label{D:essnonbranch}
A metric measure space $(X,\sfd, \mm)$ is \emph{essentially non-branching} if and only if for any $\mu_{0},\mu_{1} \in \mathcal{P}_{2}(X)$
which are absolutely continuous with respect to $\mm$ any element of $\Opt(\mu_{0},\mu_{1})$ is concentrated on a set of non-branching geodesics.
\end{definition}

A set $F \subset \Geo(X)$ is a set of non-branching geodesics if and only if for any $\gamma^{1},\gamma^{2} \in F$, it holds:
$$
\exists \ \bar t \in (0, 1) \, \colon \ \gamma_{ t}^{1} = \gamma_{t}^{2}, \ \forall \ t \in (0, \bar t) 
\quad 
\Longrightarrow 
\quad 
\gamma^{1}_{s} = \gamma^{2}_{s}, \quad \forall s \in [0,1].
$$

\begin{definition}[$\CD$ condition]\label{D:CD}
An essentially non-branching m.m.s. $(X,\sfd,\mm)$ verifies $\mathsf{CD}(K,N)$  if and only if for each pair 
$\mu_{0}, \mu_{1} \in \mathcal{P}_{2}(X,\sfd,\mm)$ there exists $\nu \in \Opt(\mu_{0},\mu_{1})$ such that
\begin{equation}\label{E:CD}
\r_{t}^{-1/N} (\gamma_{t}) \geq  \tau_{K,N}^{(1-t)}(\sfd( \gamma_{0}, \gamma_{1}))\r_{0}^{-1/N}(\gamma_{0}) 
 + \tau_{K,N}^{(t)}(\sfd(\gamma_{0},\gamma_{1}))\r_{1}^{-1/N}(\gamma_{1}), \qquad \nu\text{-a.e. } \, \gamma \in \Geo(X),
\end{equation}
for all $t \in [0,1]$, where ${\rm e}_{t} \, \sharp \, \nu = \r_{t} \mm$.
\end{definition}

For the general definition of $\CD(K,N)$ see \cite{lottvillani:metric, sturm:I, sturm:II}. It is worth recalling that if $(M,g)$ is a Riemannian manifold of dimension $n$ and 
$h \in C^{2}(M)$ with $h > 0$, then the m.m.s. $(M,g,h \, vol)$ verifies $\CD(K,N)$ with $N\geq n$ if and only if  (see Theorem 1.7 of \cite{sturm:II})
$$
Ric_{g,h,N} \geq  K g, \qquad Ric_{g,h,N} : =  Ric_{g} - (N-n) \frac{\nabla_{g}^{2} h^{\frac{1}{N-n}}}{h^{\frac{1}{N-n}}}.  
$$
In particular if $N = n$ the generalized Ricci tensor $Ric_{g,h,N}= Ric_{g}$ makes sense only if $h$ is constant. In particular, if $I \subset \R$ is any interval, $h \in C^{2}(I)$ 
and $\mathcal{L}^{1}$ is the one-dimensional Lebesgue measure, the m.m.s. $(I ,|\cdot|, h \mathcal{L}^{1})$ verifies $\CD(K,N)$ if and only if  
\begin{equation}\label{E:CD-N-1}
\left(h^{\frac{1}{N-1}}\right)'' + \frac{K}{N-1}h^{\frac{1}{N-1}} \leq 0,
\end{equation}
and verifies $\CD(K,1)$ if and only if $h$ is constant.

We also mention the more recent Riemannian curvature dimension condition $\RCD^{*}$ introduced in the infinite dimensional case in \cite{AGS11b,AGS,AGMR12} 
and in the finite dimensional case in \cite{EKS,AMS}. We refer to these papers and references therein for a general account 
on the synthetic formulation of Ricci curvature lower bounds for metric measure spaces. 
Here we only mention that $\RCD^{*}(K,N)$ condition 
is an enforcement of the so called reduced curvature dimension condition, denoted by $\CD^{*}(K,N)$, that has been introduced in \cite{BS10}: 
in particular the additional condition is that the Sobolev space $W^{1,2}(X,\mm)$ is an Hilbert space, see \cite{AGS11a, AGS11b}.

The reduced $\CD^{*}(K,N)$ condition asks for the same inequality \eqref{E:CD} of $\CD(K,N)$ but  the
coefficients $\tau_{K,N}^{(t)}(\sfd(\gamma_{0},\gamma_{1}))$ and $\tau_{K,N}^{(1-t)}(\sfd(\gamma_{0},\gamma_{1}))$ 
are replaced by $\sigma_{K,N}^{(t)}(\sfd(\gamma_{0},\gamma_{1}))$ and $\sigma_{K,N}^{(1-t)}(\sfd(\gamma_{0},\gamma_{1}))$, respectively.

Hence while the distortion coefficients of the $\CD(K,N)$ condition 
are formally obtained imposing one direction with linear distortion and $N-1$ directions affected by curvature, 
the $\CD^{*}(K,N)$ condition imposes the same volume distortion in all the $N$ directions. 

It was proved in \cite{RS2014} that the $\RCD^*(K,N)$ condition implies the essentially non-branching property, so this is a fairly natural assumption 
in the framework of m.m.s. satisfying lower Ricci bounds.

For both $\CD$-$\CD^*$ definitions there is a local version that is of some relevance for our analysis. Here we state only the local formulation $\mathsf{CD}(K,N)$, the one for  $\mathsf{CD}^{*}(K,N)$
being similar.

\begin{definition}[$\CD_{loc}$ condition]\label{D:loc}
An essentially non-branching m.m.s. $(X,\sfd,\mm)$ satisfies $\CD_{loc}(K,N)$ if for any point $x \in X$ 
there exists a neighborhood $X(x)$ of $x$ such that for each pair 
$\mu_{0}, \mu_{1} \in \mathcal{P}_{2}(X,\sfd,\mm)$ supported in $X(x)$
there exists $\nu \in \Opt(\mu_{0},\mu_{1})$ such that \eqref{E:CD} holds true for all $t \in [0,1]$.
The support of ${\rm e}_{t} \, \sharp \, \nu$ is not necessarily contained in the neighborhood $X(x)$.
\end{definition}

One of the main properties of the reduced curvature dimension condition is the globalization one:  
under the essentially non-branching property,  $\mathsf{CD}^{*}_{loc}(K,N)$ and $\mathsf{CD}^{*}(K,N)$ are equivalent (see \cite[Corollary 5.4]{BS10}).  Let us mention that the local-to-global property is satisfied also by the $\RCD^*(K,N)$ condition, see \cite{AMSLocToGlob}.

We also recall few relations between $\CD$ and $\CD^{*}$.
It is known by \cite[Theorem 2.7]{GigliMap} that, if $(X,\sfd,\mm)$ is a non-branching metric measure space 
satisfying $\CD(K,N)$ and $\mu_{0}, \mu_{1} \in \mathcal{P}(X)$ with $\mu_{0}$ absolutely continuous with respect to $\mm$, 
then there exists a unique optimal map $T : X \to X$ such that $(id, T) \, \sharp\, \mu_{0}$ realizes the minimum in \eqref{eq:Wdef} and the set 
$\Opt(\mu_{0},\mu_{1})$ contains only one element. The same proof holds if one replaces the non-branching assumption with the more general 
one of essentially non-branching, see for instance \cite{RS2014}.

\begin{remark}[$\CD^*(K,N)$ Vs $\CD_{loc}(K,N)$]\label{rk:CDCDs}
Results of \cite{BS10} imply the following chain of implications: if $(X,\sfd, \mm)$ is a proper, essentially non-branching, metric measure space, 
then 
$$
\CD_{loc}(K,N) \iff \CD^{*}_{loc}(K,N) \iff \CD^{*}(K,N), 
$$
provided $K,N \in \R$ with $N > 1$ or $N=1$ and $K \geq 0$. Let us remark  that on the other hand $\CD^*(K,1)$ does not imply $\CD_{loc}(K,1)$ for $K<0$: indeed it is possible to check that $(X,\sfd,\mm)=([0,1], |\cdot|, c\, \sinh(\cdot) \L^1)$ satisfies $\CD^*(-1,1)$ but not $\CD_{loc}(-1,1)$ which would require the density to be constant.
Hence 
 $\CD^{*}(K,N)$ and $\CD_{loc}(K,N)$ are equivalent if  $1 < N <\infty$ or $N =1$ and $K \geq 0$, but for $N =1$ and $K < 0$ the $\CD_{loc}(K,N)$ condition is strictly stronger than $\CD^{*}(K,N)$.


Note also that many results presented in \cite{BS10} are for metric measure spaces verifying $\CD(K-,N)$ (and its local version), that is they verify the 
$\CD(K',N)$ condition for all $K' < K$. Thanks to uniqueness of geodesics in $(\mathcal{P}_{2}(X), W_{2})$ guaranteed by the essentially non-branching assumption,  $\CD(K-,N)$ is equivalent to $\CD(K,N)$.

As a final comment we also mention that, for $K > 0$, $\CD^{*}(K,N)$ implies $\CD(K^{*},N)$ where $K^{*} = K(N-1) / N$.
For a deeper analysis on the interplay between $\CD^{*}$ and $\CD$ we refer to \cite{cava:decomposition, cavasturm:MCP}.
\end{remark}

\subsection{Measured Gromov-Hausdorff convergence and stability of $\RCD^*(K,N)$}\label{SS:mGHConv}

Let us first recall the notion of measured Gromov-Hausdorff convergence, mGH for short. Since in this work we will apply it to compact  m.m. spaces endowed with  probability measures having full support, we will restrict to this framework for simplicity (for a more general treatment see for instance \cite{GMS2013}).
   
 \begin{definition}  
 A sequence $(X_j,\sfd_j,\mm_j)$ of compact m.m. spaces with $\mm_j(X_j)=1$ and $\supp(\mm_j)=X_j$ is said to converge 
in the  measured Gromov-Hausdorff topology (mGH for short) to a compact m.m. space 
$(X_\infty,\sfd_\infty,\mm_\infty)$ with $\mm_\infty(X)=1$ and $\supp(\mm_\infty)=X_\infty$ if and only if there 
exists a separable metric space $(Z,\sfd_Z)$ and isometric embeddings  
$\{\iota_j:(X,\sfd_j)\to (Z,\sfd_Z)\}_{i \in \bar{\N}}$ such that
for every 
$\varepsilon>0$  there exists $j_0$ such that for every $j>j_0$
\[
\iota_\infty(X_\infty) \subset B^Z_{\varepsilon}[\iota_j (X_j)]  \qquad \text{and} \qquad  \iota_j(X_j) \subset B^Z_{\varepsilon}[\iota_\infty(X_\infty)], 
\]
where $B^Z_\varepsilon[A]:=\{z \in Z: \, \sfd_Z(z,A)<\varepsilon\}$ for every subset $A \subset Z$, and 
\[
\int_Z \varphi \,  ((\iota_j)_\sharp(\mm_j))\qquad    \to \qquad  \int_Z \varphi \,  ((\iota_\infty)_\sharp(\mm_\infty)) \qquad \forall \varphi \in C_b(Z), 
\]
where $C_b(Z)$ denotes the set of real valued bounded continuous functions  in $Z$.
 \end{definition}
 
The following theorem summarizes the compactness/stability properties we will use  in the proof of the almost rigidity result (notice these hold more generally for every $K\in \R$ by replacing mGH with  \emph{pointed}-mGH convergence).
\begin{theorem}[Metrizability and Compactness]\label{thm:CompRCD}
Let $K>0, N>1$ be fixed.  Then the mGH convergence restricted to  (isomorphism classes of)  $\RCD^*(K,N)$ spaces is metrizable by a distance function $\sfd_{mGH}$. Furthermore every sequence $(X_j,\sfd_j, \mm_j)$ of $\RCD^*(K,N)$ spaces admits a subsequence which  mGH-converges  to a limit $\RCD^*(K,N)$ space.
\end{theorem}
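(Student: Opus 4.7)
The plan is to combine three ingredients: a uniform diameter bound, uniform doubling from Bishop--Gromov, and stability of the $\RCD^*(K,N)$ condition under measured Gromov--Hausdorff convergence.

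First, since $K>0$, the generalized Bonnet--Myers theorem for $\CD^*(K,N)$ yields a uniform upper bound $D_0=D_0(K,N)$ on $\diam(X)$ for every $(X,\sfd,\mm)\in\RCD^*(K,N)$; one way to see this is to apply the sharp Brunn--Minkowski inequality (Theorem~\ref{T:introBM}) to pairs of small balls centered at almost-diametral points, but it is also a direct consequence of \cite{BS10}. The Bishop--Gromov volume comparison, valid under $\CD^*(K,N)$, then produces a constant $C=C(K,N)$ such that $\mm(B_{2r}(x))\leq C\,\mm(B_r(x))$ for all balls, once the total mass is normalised to $\mm(X)=1$. Combined with the diameter bound, this yields a uniform covering number $N(\ve)\in\N$ controlling the cardinality of $\ve$-nets of any $\RCD^*(K,N)$ space.

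The uniform covering estimate is exactly the hypothesis of Gromov's precompactness theorem at the metric level, and together with $\mm_j(X_j)=1$ it yields tightness of the measures. A simultaneous isometric embedding of the spaces $(X_j,\sfd_j)$ into a common separable ambient space (for instance via a countable dense family into $\ell^\infty$, or through a diagonal $\ve$-net construction) then allows one to extract a subsequence along which supports converge in Hausdorff distance and push-forward measures in Prokhorov distance. This gives the precompactness statement at the level of mm-spaces; the same construction produces an explicit metric $\sfd_{mGH}$ metrizing mGH convergence on the class of compact mm-spaces of uniformly bounded diameter and total mass, adapting the Gromov--Hausdorff--Prokhorov distance of \cite{GMS2013} to the present probability-measure setting.

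It remains to show that the limit $(X_\infty,\sfd_\infty,\mm_\infty)$ still belongs to $\RCD^*(K,N)$. This has two components: (i) stability of the reduced curvature--dimension condition $\CD^*(K,N)$ under mGH convergence, proved in \cite{BS10} by passing to the limit in the defining inequality \eqref{E:CD} along approximating optimal dynamical plans and exploiting the continuity of the coefficients $\tau_{K,N}^{(t)}$ and $\sigma_{K,N}^{(t)}$; and (ii) stability of the infinitesimally Hilbertian condition, i.e.\ of the requirement that $W^{1,2}(X_\infty,\mm_\infty)$ be a Hilbert space. Part (ii) is the \emph{main obstacle}, because the Sobolev spaces live on different ambient spaces and the parallelogram identity for the Cheeger energy must be transferred to the limit; it is handled by a Mosco--$\Gamma$-convergence argument for the Cheeger energies along mGH sequences, developed by Ambrosio--Gigli--Savar\'e and Gigli--Mondino--Savar\'e. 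Once (i) and (ii) are combined, the mGH limit is again an $\RCD^*(K,N)$ space, concluding both halves of the theorem.
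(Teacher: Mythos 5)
Your proposal is correct and follows essentially the same route as the paper, which itself only sketches the argument: uniform diameter bound and uniform doubling give Gromov precompactness and tightness of the normalized measures, metrizability is the Gromov--Hausdorff--Prokhorov construction of \cite{GMS2013}, and closedness of the class follows from the stability of $\CD^*(K,N)$ together with the Mosco/$\Gamma$-convergence of Cheeger energies for the infinitesimally Hilbertian part, as in \cite{BS10,EKS,GMS2013}. Your write-up simply makes explicit the content of the references the paper cites.
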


The compactness follows by the standard argument of Gromov, indeed for fixed $K>0,N>1$, the spaces have uniformly bounded diameter,  moreover  the measures of $\RCD^*(K,N)$  spaces  are uniformly doubling, hence the spaces  are uniformly totally bounded and thus compact in the GH-topology; the weak compactness of the measures follows using the doubling condition again and the fact that they are normalized. For the stability of the $\RCD^*(K,N)$ condition under mGH convergence see for instance   \cite{BS10,EKS,GMS2013}. The metrizability of mGH convergence restricted to a class of  uniformly doubling  normalized m.m. spaces having uniform diameter bounds is also well known, see for instance \cite{GMS2013}.

\subsection{Warped product} 

Given two geodesic m.m.s. $(B,\sfd_{B}, \mm_{b})$ and $(F,\sfd_{F},\mm_{F})$ and a Lipschitz function $f : B \to \R_{+}$ one can define a 
length function on the product $B \times F$: for any absolutely continuous curve $\gamma : [0,1] \to B \times F$ with $\gamma = (\alpha, \beta)$, 
define 
$$
L(\gamma) : = \int_{0}^{1} \left(  |\dot \alpha|^{2}(t) + (f\circ \alpha)^{2}(t) |\dot \beta|^{2}(t) \right)^{1/2} dt
$$
and define accordingly the pseudo-distance 
$$
|(p,x),(q,y)| : = \inf \left\{ L(\gamma) \colon \gamma_{0} = (p,x), \ \gamma_{1} = (q,y) \right\}.
$$
Then the warped product of $B$ with $F$  is defined as 
$$
B \times_{f} F : = \left( \bigslant{B\times F}{ \sim}, |\cdot, \cdot | \right),
$$
where $(p,x) \sim (q,y)$ if and only if $|(p,x), (q,y)| = 0$. One can also associate a measure and obtain the following object
$$
B\times^{N}_{f} F : = (B \times_{f} F, \mm_{C}), \qquad \mm_{C} : = f^{N} \mm_{B} \otimes \mm_{F}. 
$$
Then  $B\times^{N}_{f} F$ will be a metric measure space called measured warped product. For a general picture on the curvature properties of warped products, 
we refer to \cite{Ket}.


\medskip

\subsection{Localization method}

\medskip
The next theorem  represents the key technical tool of the present paper. The roots of such a result, known in literature as localization technique,  can be traced back to a work of   Payne-Weinberger \cite{PW} further developed  in the Euclidean space  by Gromov-Milman \cite{GrMi}, Lov\'asz-Simonovits \cite{LoSi} and Kannan-Lov\'asz-Simonovits \cite{KaLoSi}. The basic idea  consists in reducing an $n$-dimensional problem to a one dimensional one  via tools of convex geometry. Recently Klartag \cite{klartag}  found an $L^1$-optimal transportation approach leading to a  generalization of these ideas to  Riemannian manifolds;   the authors  \cite{CM1}, via a careful analysis avoiding any smoothness assumption,  generalized this approach to metric measure spaces.

\begin{theorem}\label{T:localize}
Let $(X,\sfd, \mm)$ be an essentially non-branching metric measure space with $\mm(X)=1$ satisfying $\CD_{loc}(K,N)$ for some $K,N \in \R$ and $N \in [1, \infty)$. 
Let $f : X \to \R$ be $\mm$-integrable such that $\int_{X} f\, \mm = 0$ and assume the existence of $x_{0} \in X$ such that $\int_{X} | f(x) |\,  \sfd(x,x_{0})\, \mm(dx)< \infty$. 
\medskip

Then the space $X$ can be written as the disjoint union of two sets $Z$ and $\mathcal{T}$ with $\mathcal{T}$ admitting a partition 
$\{ X_{q} \}_{q \in Q}$, where each $X_{q}$ is the image of a geodesic; moreover there exists a family of probability measures $\{\mm_{q} \}_{q \in Q} \subset \mathcal{P}(X)$ with the following properties: 

\begin{itemize}
\item For any $\mm$-measurable set $B \subset \mathcal{T}$ it holds 
$$
\mm(B) = \int_{Q} \mm_{q}(B) \, \qq(dq), 
$$
where $\qq$ is a probability measure over $Q \subset X$.
\medskip
\item For $\qq$-almost every $q \in Q$, the set $X_{q}$ is a geodesic with strictly positive length and $\mm_{q}$ is supported on it. 
Moreover $q \mapsto \mm_{q}$ is a $\CD(K,N)$ disintegration, that is $\mm_{q} = g(q,\cdot) \, \sharp \, \left( h_{q} \cdot \mathcal{L}^{1} \right)$, with 
\begin{equation}\label{E:curvdensmm}
h_{q}( (1-s)  t_{0}  + s t_{1} )^{\frac{1}{N-1}}  
 \geq \sigma^{(1-s)}_{K,N-1}(t_{1} - t_{0}) h_{q} (t_{0})^{\frac{1}{N-1}} + \sigma^{(s)}_{K,N-1}(t_{1} - t_{0}) h_{q} (t_{1})^{\frac{1}{N-1}},
\end{equation}
for all $s\in [0,1]$ and for all $t_{0}, t_{1} \in \dom(g(q,\cdot))$ with  $t_{0} < t_{1}$, where $g(q,\cdot)$ is the isometry with range $X_{q}$.
If $N =1$, for $\qq$-a.e. $q \in Q$ the density $h_{q}$ is constant.
\medskip
\item For $\qq$-almost every $q \in Q$, it holds $\int_{X_{q}} f \, \mm_{q} = 0$ and $f = 0$ $\mm$-a.e. in $Z$.
\end{itemize}
\end{theorem}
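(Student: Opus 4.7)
\emph{Proof plan.}

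The strategy is $L^{1}$-optimal transportation. Since $\int_X f\,\mm = 0$ and $\int_X |f(x)| \sfd(x,x_{0})\,\mm(dx)<\infty$, the positive and negative parts $f^{+}\mm$ and $f^{-}\mm$ have the same finite total mass and finite first moment, so $W_{1}(f^{+}\mm,f^{-}\mm)<\infty$. Kantorovich duality for this $L^{1}$-problem produces a $1$-Lipschitz potential $\varphi:X\to\R$ maximizing $\int_X \varphi\,f\,\mm$. I would then introduce the associated transport order $\Gamma_{\varphi}:=\{(x,y):\varphi(x)-\varphi(y)=\sfd(x,y)\}$, whose maximal chains are geodesics; call them \emph{transport rays}. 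Let $\mathcal{T}$ be the union of the relative interiors of all non-trivial rays and set $Z:=X\setminus\mathcal{T}$. A standard consequence of $L^{1}$-duality is that mass cannot be transported through $Z$, forcing $f=0$ $\mm$-a.e.\ on $Z$.

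Next I would apply a measurable selection to produce a Borel cross-section $Q\subset X$ meeting each ray exactly once, giving a quotient map $\pi:\mathcal{T}\to Q$ with $\pi^{-1}(q)=X_{q}$. The essentially non-branching assumption ensures the rays can intersect only at their endpoints, so $\pi$ is well defined $\mm$-almost everywhere, and the disintegration theorem yields $\mm\llcorner\mathcal{T}=\int_{Q}\mm_{q}\,\qq(dq)$ with $\mm_{q}$ concentrated on $X_{q}$. Writing $\mm_{q}=g(q,\cdot)_{\sharp}(h_{q}\mathcal{L}^{1})$ for the arclength parametrization $g(q,\cdot)$, the balance $\int_{X_{q}} f\,\mm_{q}=0$ for $\qq$-a.e.\ $q$ follows by disintegrating the identity $\int_{X}\varphi f\,\mm=W_{1}(f^{+}\mm,f^{-}\mm)$: optimality localizes the transport ray by ray, and each ray must transport all its $f^{+}$-mass exactly onto its $f^{-}$-mass.

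The substantive step is the curvature estimate \eqref{E:curvdensmm} for the conditional densities $h_{q}$. I would argue it by selecting two disjoint absolutely continuous marginals supported on the interior of a single ray and considering the (unique) $W_{2}$-optimal plan between them. The essentially non-branching hypothesis, combined with the fact that $\varphi$ already realises cyclical monotonicity along the ray, forces this plan to be concentrated on geodesics whose images remain inside that ray. Applying $\CD_{loc}(K,N)$ to such transports in small neighbourhoods and letting the marginals concentrate yields, in the interior of $X_{q}$, the one-dimensional differential inequality $(h_{q}^{1/(N-1)})'' + \frac{K}{N-1} h_{q}^{1/(N-1)}\leq 0$, which upgrades to the global concavity \eqref{E:curvdensmm}. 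The case $N=1$ collapses the inequality to constancy of $h_{q}$ on each ray.

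The hard part is the final step: propagating an $N$-dimensional \emph{local} curvature-dimension inequality to a genuinely one-dimensional \emph{global} concavity for the conditional densities without any smoothness on $(X,\sfd,\mm)$. The technical difficulties reside in (i) selecting $Q$ in a measurable way so that the disintegration is regular enough to be differentiated along each ray, (ii) showing that the $W_{2}$-optimal couplings used to test $\CD_{loc}$ actually stay on a single ray in the absence of branching, and (iii) transferring the infinitesimal condition into the integrated form \eqref{E:curvdensmm}. These measurability and regularity arguments were carried out in detail by the authors in \cite{CM1}, and the present theorem is essentially the output of that construction; I would therefore only sketch the above steps and invoke \cite{CM1} for the full verification.
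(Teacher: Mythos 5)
The paper does not actually prove Theorem~\ref{T:localize}: it is stated in the Prerequisites section as the main technical input and attributed to the authors' earlier work \cite{CM1} (and, in the smooth case, to \cite{klartag}). Your sketch correctly reproduces the strategy of \cite{CM1} — Kantorovich duality for the $L^{1}$-problem between $f^{+}\mm$ and $f^{-}\mm$, transport rays of the potential $\varphi$, disintegration over a measurable cross-section, ray-by-ray mass balance, and propagation of $\CD_{loc}(K,N)$ to a one-dimensional $(K,N)$-concavity for the conditional densities — and you correctly identify the genuinely hard points (measurability of the ray decomposition, negligibility of branching under the essentially non-branching hypothesis, and the upgrade from the infinitesimal to the integrated convexity inequality) as those resolved in \cite{CM1}. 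This is essentially the same approach as the one the paper relies on.
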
 
 
\medskip

\begin{remark}
Inequality \eqref{E:curvdensmm} is the weak formulation of the following differential inequality on $h_{q,t_{0},t_{1}}$: 
\begin{equation}\label{eq:hqt0t1}
\left(h_{q,t_{0},t_{1}}^{\frac{1}{N-1}}\right)'' + (t_{1}-t_{0})^{2} \frac{K}{N-1}h_{q,t_{0},t_{1}}^{\frac{1}{N-1}} \leq 0,
\end{equation}
for all $t_{0}<t_{1} \in \dom(g(q,\cdot))$, where $h_{q,t_{0},t_{1}} (s) : = h_{q} ((1-s)t_{0} + st_{1})$. It is easy to observe that the differential inequality  \eqref{eq:hqt0t1}  on $h_{q,t_{0},t_{1}}$ is equivalent to the following differential inequality on $h_q$:
$$
\left(h_{q}^{\frac{1}{N-1}}\right)'' + \frac{K}{N-1}h_{q}^{\frac{1}{N-1}} \leq 0,
$$
that is precisely \eqref{E:CD-N-1}. Then Theorem \ref{T:localize} can be alternatively stated as follows. \\ 
\emph{
If $(X,\sfd,\mm)$ is an essentially non-branching m.m.s. verifying $\CD_{loc}(K,N)$ 
and $\f : X \to \R$ is a 1-Lipschitz function, then the corresponding decomposition of the space 
in maximal rays $\{ X_{q}\}_{q\in Q}$ produces a disintegration $\{\mm_{q} \}_{q\in Q}$ of $\mm$ so that  for $\qq$-a.e. $q\in Q$, 
$$
\textrm{the m.m.s. }(  \dom(g(q,\cdot)), |\cdot|, h_{q} \mathcal{L}^{1}) \quad \textrm{verifies} \quad \CD(K,N).
$$
}
Accordingly, from now on we will say that the disintegration $q \mapsto \mm_{q}$ is a $\CD(K,N)$ disintegration.
\end{remark}

Few comments on Theorem \ref{T:localize} are in order.
From \eqref{E:curvdensmm} it follows that
\begin{equation}\label{E:regularityh}
\{ t \in \dom(g(q,\cdot))  \colon  h_{q}(t) > 0 \}  \textrm{ is convex and} \quad t \mapsto h_{q}(t) \textrm{ is locally Lipschitz continuous}.
\end{equation}
The measure $\qq$ is the quotient measure associated to the partition $\{ X_{q} \}_{q\in Q}$ of $\mathcal{T}$ and $Q$ its quotient set, see \cite{CM1} for details.

\bigskip
\bigskip

\section{Sharp Brunn-Minkowski inequality}

In this section we prove sharp Brunn-Minkowski inequality for m.m.s. satisfying $\CD_{loc}(K,N)$. 
It follows  from Remark \ref{rk:CDCDs} that the same result holds under $\CD^{*}(K,N)$ for any $K,N \in \R$, provided $N \in (1,\infty)$ or $N = 1$ and $K \geq 0$. See also Remark \ref{R:intro}.
The same will hold for all the inequalities proved in the paper.

\begin{theorem}\label{T:BM}
Let $(X,\sfd,\mm)$ with $\mm(X)<\infty$ verify $\CD_{loc}(K,N)$ for some $N,K \in \R$ and $N \in [1, \infty)$. Assume moreover $(X,\sfd,\mm)$ to be essentially non-branching.
Then it satisfies the following sharp Brunn-Minkowski inequality: for any $A_{0}, A_{1} \subset X$
\begin{equation}\label{E:BM}
\mm(A_{t})^{1/N}  \geq \tau_{K,N}^{(1-t)}(\theta) \mm(A_{0})^{1/N} + \tau_{K,N}^{(t)}(\theta) \mm(A_{1})^{1/N},
\end{equation}
where $A_{t}$ is the set of $t$-intermediate points between $A_{0}$ and $A_{1}$, that is 
$$
A_{t} = \ee_{t} \Big(  \{ \gamma \in \Geo(X) \colon \gamma_{0} \in A_{0}, \gamma_{1} \in A_{1}   \}  \Big),
$$
and $\theta$ the minimal/maximal length of geodesics from $A_{0}$ to $A_{1}$: 
$$
\theta:= 
\begin{cases}
\inf_{(x_{0},x_{1}) \in A_{0} \times A_{1}} \sfd(x_{0},x_{1}), & \textrm{if } K \geq 0, \\
\sup_{(x_{0},x_{1}) \in A_{0} \times A_{1}} \sfd(x_{0},x_{1}), & \textrm{if } K < 0.
\end{cases}
$$
\end{theorem}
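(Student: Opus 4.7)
The strategy is to invoke the localization Theorem~\ref{T:localize} to reduce the statement to the one-dimensional situation, where it becomes a direct consequence of the $\CD(K,N)$ ODE on the conditional densities.

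First I would make some standard reductions: rescaling $\mm$ we may assume $\mm(X)=1$; the inequality is trivial if $\mm(A_{0})$ or $\mm(A_{1})$ vanishes, so assume both are positive, and by replacing $A_{1}$ with $A_{1}\setminus A_{0}$ (which only enlarges $A_{t}$) assume $A_{0}\cap A_{1}=\emptyset$. Intersecting with large balls and passing to the monotone limit, I may further assume $A_{0},A_{1}$ bounded so that the integrability hypothesis of Theorem~\ref{T:localize} is met. I would then apply the localization to the balanced function
\[
f := \frac{\mathbf{1}_{A_{0}}}{\mm(A_{0})} - \frac{\mathbf{1}_{A_{1}}}{\mm(A_{1})},\qquad \int_{X} f\,\mm = 0,
\]
obtaining a partition $\{X_{q}\}_{q\in Q}$ of $\T := X\setminus Z$ into geodesics, a probability $\qq$ on $Q$ and a $\CD(K,N)$ disintegration $\{\mm_{q}\}_{q\in Q}$ along them. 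Since $f$ is $\mm$-a.e.\ nonzero on $A_{0}\cup A_{1}$ and vanishes on $Z$, we have $A_{0},A_{1}\subset \T$ up to $\mm$-null sets, and the condition $\int f\,\mm_{q}=0$ yields, for $\qq$-a.e.\ $q$,
\[
\frac{\mm_{q}(A_{0})}{\mm(A_{0})} = \frac{\mm_{q}(A_{1})}{\mm(A_{1})} =: c_{q} \geq 0.
\]

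Next I would work on each ray. With $g(q,\cdot)$ the isometric parametrization of $X_{q}$ and $h_{q}$ the one-dimensional density, so that \eqref{E:curvdensmm} holds, set $B_{i}^{q}:=g(q,\cdot)^{-1}(A_{i}\cap X_{q})$ and let $B_{t}^{q}\subset\R$ be the set of $t$-intermediate points between $B_{0}^{q}$ and $B_{1}^{q}$. The isometry property gives $g(q,B_{t}^{q})\subset A_{t}$, whence $\mm_{q}(A_{t})\geq \int_{B_{t}^{q}} h_{q}\,d\mathcal{L}^{1}$, and the one-dimensional Brunn--Minkowski inequality for a density satisfying \eqref{E:CD-N-1} (proved directly from the $\sigma_{K,N-1}$-concavity of $h_{q}^{1/(N-1)}$, or from the well-known equivalence between the ODE and the $\CD(K,N)$ condition in dimension one) gives
\[
\mm_{q}(A_{t})^{1/N} \geq \tau_{K,N}^{(1-t)}(\theta_{q})\,\mm_{q}(A_{0})^{1/N} + \tau_{K,N}^{(t)}(\theta_{q})\,\mm_{q}(A_{1})^{1/N},
\]
where $\theta_{q}$ is the infimum (if $K\geq 0$) or supremum (if $K<0$) of $|s_{0}-s_{1}|$ over $(s_{0},s_{1})\in B_{0}^{q}\times B_{1}^{q}$; as $g(q,\cdot)$ is an isometry, this equals the corresponding extremum of $\sfd$ on $(A_{0}\cap X_{q})\times(A_{1}\cap X_{q})$. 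Monotonicity of $\theta\mapsto\tau_{K,N}^{(s)}(\theta)$ (increasing for $K\geq 0$, decreasing for $K<0$) together with the definition of $\theta$ in the statement gives $\tau_{K,N}^{(s)}(\theta_{q})\geq \tau_{K,N}^{(s)}(\theta)$. Substituting $\mm_{q}(A_{i})=c_{q}\mm(A_{i})$ and raising to the $N$-th power,
\[
\mm_{q}(A_{t}) \geq c_{q} \Bigl[\tau_{K,N}^{(1-t)}(\theta)\,\mm(A_{0})^{1/N} + \tau_{K,N}^{(t)}(\theta)\,\mm(A_{1})^{1/N}\Bigr]^{N}.
\]
Integrating against $\qq$ and using $\int c_{q}\,\qq(dq) = \mm(A_{0}\cap \T)/\mm(A_{0}) = 1$ produces
\[
\mm(A_{t}) \geq \int_{Q}\mm_{q}(A_{t})\,\qq(dq) \geq \Bigl[\tau_{K,N}^{(1-t)}(\theta)\,\mm(A_{0})^{1/N} + \tau_{K,N}^{(t)}(\theta)\,\mm(A_{1})^{1/N}\Bigr]^{N},
\]
and \eqref{E:BM} follows by taking $1/N$-th powers.

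The main technical point is the one-dimensional Brunn-Minkowski for the $h_{q}$; this reduces to the ODE \eqref{E:CD-N-1} and is standard (for $N=1$ the densities are constant and the claim reduces to the classical $\mathcal{L}^{1}(B_{t})\geq (1-t)\mathcal{L}^{1}(B_{0})+t\mathcal{L}^{1}(B_{1})$). Minor technicalities such as the analyticity and universal measurability of $A_{t}$ when $A_{0},A_{1}$ are Borel, and the relaxation of the boundedness hypothesis, are handled by inner regularity and monotone convergence.
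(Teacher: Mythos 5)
Your overall strategy—localize with $f = \chi_{A_0}/\mm(A_0) - \chi_{A_1}/\mm(A_1)$, apply the one-dimensional $\CD(K,N)$ Brunn--Minkowski inequality on each ray (the paper's Lemma~\ref{L:1dBM}), use the balance condition $\int f\,\mm_q=0$ to extract the proportion $c_q$, and integrate—is exactly the paper's argument. However, the preliminary reduction ``by replacing $A_1$ with $A_1\setminus A_0$ (which only enlarges $A_t$) assume $A_0\cap A_1=\emptyset$'' contains a genuine gap. First, passing to the smaller set $A_1\setminus A_0$ shrinks the intermediate set $A_t$, it does not enlarge it. More importantly, even granting $A_t(A_0,A_1\setminus A_0)\subset A_t(A_0,A_1)$ and the favorable behaviour of $\theta$, the inequality for the reduced pair carries $\mm(A_1\setminus A_0)^{1/N}$ on the right-hand side, which is strictly smaller than the desired $\mm(A_1)^{1/N}$ whenever $\mm(A_0\cap A_1)>0$; no monotonicity allows you to upgrade one to the other, so the conclusion for $(A_0,A_1)$ does not follow from the conclusion for $(A_0,A_1\setminus A_0)$.

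This matters precisely because the reduction is what your proof relies on to ensure $A_0\cup A_1\subset\T$ up to null sets, hence $\int c_q\,\qq(dq)=\mm(A_0\cap\T)/\mm(A_0)=1$. When $\mm(A_0)=\mm(A_1)$, the residual set $Z=\{f=0\}$ can intersect $A_0\cap A_1$ in positive measure, and that mass would be silently dropped. The paper handles this directly: it extends the disintegration over $Z$ by Dirac masses $\hat\mm_q=\delta_q$, notes that $\mm$-a.e.\ $q\in Z$ lies either in $X\setminus(A_0\cup A_1)$ (where the fibrewise inequality reads $0\geq 0$) or in $A_0\cap A_1$, and in the latter case reduces the fibrewise inequality to the elementary estimate $\bigl(\tau_{K,N}^{(1-t)}(\theta)+\tau_{K,N}^{(t)}(\theta)\bigr)^N\leq 1$, which it verifies separately for $K\geq 0$ (where $\theta=0$, so $\tau^{(s)}_{K,N}(0)=s$) and $K<0$ (via H\"older together with monotonicity of $K\mapsto\sigma^{(t)}_{K,N}$). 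Adding this case analysis for $Z$, and deleting the incorrect disjointness reduction, repairs your argument and brings it in line with the paper's.
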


Before starting the proof of Theorem \ref{T:BM} we recall the classical result of Borell \cite{Borell} and Brascamp-Lieb \cite{BraLi} 
characterizing one-dimensional measures satisfying Brunn-Minkowski inequality.  

\begin{lemma}\label{L:1dBM}
Let $\eta$ be a Borel measure defined on $\R$  admitting the following representation:  $\eta =  h \cdot \mathcal{L}^{1}$. 
The following are equivalent: 
\begin{itemize}
\item[$i)$] The density $h$ is $(K,N)$-concave on its convex support, that is 
$$
\left( h^{\frac{1}{N-1}} \right)'' + \frac{K}{N-1} h^{\frac{1}{N-1}} \leq 0, 
$$ 
in the weak sense, see \eqref{E:curvdensmm}.

\item[$ii)$] For any $A_{0}, A_{1}$ subsets of $\R$
$$
\eta(A_{t}) \geq \tau_{K,N}^{(1-t)}(\theta) \, \eta(A_{0})^{1/N} + \tau_{K,N}^{(t)}(\theta) \, \eta(A_{1})^{1/N},
$$
where $A_{t} : = \{ (1-t)x + t y : x \in A_{0}, \, y \in A_{1} \}$ and $\theta$ is the minimal/maximal length of geodesics from $A_{0}$ to $A_{1}$: 
$$
\theta:= 
\begin{cases}
{\rm ess}\inf_{(x_{0},x_{1}) \in A_{0} \times A_{1}} \sfd(x_{0},x_{1}), & \textrm{if } K \geq 0, \\
{\rm ess}\sup_{(x_{0},x_{1}) \in A_{0} \times A_{1}} \sfd(x_{0},x_{1}), & \textrm{if } K < 0.
\end{cases}
$$
\end{itemize}

\end{lemma}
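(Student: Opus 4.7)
The statement is the classical Borell--Brascamp--Lieb characterization adapted to the $(K,N)$ setting, and I would prove the two implications separately, using one-dimensional monotone transport for $(i)\Rightarrow(ii)$ and pointwise localization combined with a Lagrange optimization for $(ii)\Rightarrow(i)$. (The left-hand side of item $(ii)$ is of course meant to be $\eta(A_t)^{1/N}$.)

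For $(i) \Rightarrow (ii)$, reducing to $0 < \eta(A_0),\eta(A_1) < \infty$, I would introduce the nondecreasing rearrangement $T:A_0\to A_1$ transporting the normalized restriction of $\eta$ to $A_0$ onto the normalized restriction to $A_1$, which satisfies $h(x)\eta(A_1) = h(T(x))T'(x)\eta(A_0)$ a.e., together with its displacement interpolant $T_t(x):=(1-t)x+tT(x)$, so that $T_t'(x)=(1-t)+tT'(x)$ and $T_t(A_0)\subset A_t$. Writing $\theta_x:=|T(x)-x|$, multiplying the synthetic $(K,N-1)$-concavity inequality for $h^{1/(N-1)}$ raised to the $(N-1)$-th power by $T_t'(x)$, and then applying H\"older in the form $(a+b)^{N-1}(c+d)\geq(a^{(N-1)/N}c^{1/N}+b^{(N-1)/N}d^{1/N})^N$ with $a:=\sigma^{(1-t)}_{K,N-1}(\theta_x)h(x)^{1/(N-1)}$, $b:=\sigma^{(t)}_{K,N-1}(\theta_x)h(T(x))^{1/(N-1)}$, $c:=1-t$, $d:=tT'(x)$, combined with the identity $\tau^{(s)}_{K,N}(\theta)^N = s\,\sigma^{(s)}_{K,N-1}(\theta)^{N-1}$, delivers the pointwise bound
\[
[h(T_t(x))T_t'(x)]^{1/N} \geq \tau^{(1-t)}_{K,N}(\theta_x)h(x)^{1/N} + \tau^{(t)}_{K,N}(\theta_x)[h(T(x))T'(x)]^{1/N}.
\]
Using the monotonicity of $\theta\mapsto\tau^{(s)}_{K,N}(\thee)$ — nondecreasing on its admissible range for $K>0$ and nonincreasing on $[0,\infty)$ for $K<0$, which accounts for the two cases in the definition of $\theta$ — to replace $\theta_x$ by $\theta$, integrating over $A_0$, applying the reverse Minkowski (triangle) inequality in $L^N$, and identifying $\int_{A_0}h(T)T'\,dx = \eta(A_1)$ closes $(i)\Rightarrow(ii)$ via $T_t(A_0)\subset A_t$.

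For $(ii)\Rightarrow(i)$, I would localize at approximate continuity points $x_0<x_1$ of $h$ with $h(x_i)>0$, fix $t\in(0,1)$ and free parameters $\alpha,\beta>0$, and apply $(ii)$ to $A_0^\varepsilon:=[x_0,x_0+\alpha\varepsilon]$, $A_1^\varepsilon:=[x_1,x_1+\beta\varepsilon]$. The set $A_t^\varepsilon$ is an interval of length $((1-t)\alpha+t\beta)\varepsilon$ around $(1-t)x_0+tx_1$, and the corresponding $\theta$ tends to $|x_1-x_0|$, so dividing by $\varepsilon^{1/N}$ and letting $\varepsilon\downarrow 0$ yields, for every $\alpha,\beta>0$,
\[
[((1-t)\alpha+t\beta)\,h((1-t)x_0+tx_1)]^{1/N} \geq \tau^{(1-t)}_{K,N}(|x_1-x_0|)(\alpha h(x_0))^{1/N} + \tau^{(t)}_{K,N}(|x_1-x_0|)(\beta h(x_1))^{1/N}.
\]
The next step is to maximize the right-hand side under $(1-t)\alpha+t\beta=1$: a Lagrange-multiplier calculation, combined with the dual identity $\tau^{(s)}_{K,N}(\theta)^{N/(N-1)} = s^{1/(N-1)}\sigma^{(s)}_{K,N-1}(\theta)$, gives the optimum $\bigl[\sigma^{(1-t)}_{K,N-1}(|x_1-x_0|)h(x_0)^{1/(N-1)} + \sigma^{(t)}_{K,N-1}(|x_1-x_0|)h(x_1)^{1/(N-1)}\bigr]^{(N-1)/N}$; raising to the $N$-th power and then to the $1/(N-1)$-th reproduces exactly \eqref{E:curvdensmm}, and positivity of the right-hand side when $h(x_0),h(x_1)>0$ yields convexity of $\{h>0\}$.

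The main obstacle is the direction $(ii)\Rightarrow(i)$, where the correct dual identity $\tau^{(s)}_{K,N}(\theta)^{N/(N-1)} = s^{1/(N-1)}\sigma^{(s)}_{K,N-1}(\theta)$ must be exploited to produce the synthetic $\sigma$-coefficients from the optimized $\tau$-bounds, bridging the $(N-1)$-dimensional concavity formulation of $(i)$ with the $N$-dimensional Brunn--Minkowski formulation of $(ii)$. In the other direction the only carefully calibrated step is the choice of H\"older exponents: the affine Jacobian $T_t'$ contributes a first-power factor while the concave density $h(T_t)^{1/(N-1)}$ contributes an $(N-1)$-th power, and their product is handled by H\"older to land at an $N$-th power, precisely the scale on which Minkowski closes the integration.
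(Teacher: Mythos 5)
Your argument for $i)\Rightarrow ii)$ is, at its core, the same as the paper's: the paper phrases it as displacement convexity of the $N$-R\'enyi entropy $\mathcal{S}_{N}(\cdot\,|\,\eta)$ along the monotone rearrangement, factors the Jacobian as $J_{t}=J^{G}_{t}\cdot J^{W}_{t}$ with $J^{G}_{t}$ affine in $t$, and closes with H\"older plus Jensen; your pointwise inequality
$[h(T_{t}(x))T'_{t}(x)]^{1/N}\geq \tau^{(1-t)}_{K,N}(\theta_{x})h(x)^{1/N}+\tau^{(t)}_{K,N}(\theta_{x})[h(T(x))T'(x)]^{1/N}$
is exactly the explicit form of that computation, and your use of the monotonicity of $\theta\mapsto\sigma^{(s)}_{K,N-1}(\theta)$ to pass from $\theta_{x}$ to $\theta$ is a detail the paper leaves implicit. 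You additionally supply the converse $ii)\Rightarrow i)$ (small-interval localization at Lebesgue points plus optimization over $\alpha,\beta$ with $(1-t)\alpha+t\beta=1$, using $\tau^{(s)}_{K,N}(\theta)^{N/(N-1)}=s^{1/(N-1)}\sigma^{(s)}_{K,N-1}(\theta)$), which the paper explicitly omits since only $i)\Rightarrow ii)$ is used later; your sketch of that direction is correct, modulo the standard remark that the limiting inequality is first obtained at triples of Lebesgue points and then upgraded to all points by continuity of the resulting version of $h^{1/(N-1)}$.

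One justification in $i)\Rightarrow ii)$ must be repaired: there is no ``reverse Minkowski inequality in $L^{N}$'' for $N>1$ --- the inequality $\|F+G\|_{L^{N}}\geq\|F\|_{L^{N}}+\|G\|_{L^{N}}$ is false in general in that range (it holds only for exponents in $(0,1)$). The integration step nevertheless goes through, but for a different reason which you already have in hand: by the Monge--Amp\`ere relation $h(T(x))T'(x)=h(x)\,\eta(A_{1})/\eta(A_{0})$, the two integrands $F(x)=\tau^{(1-t)}_{K,N}(\theta)h(x)^{1/N}$ and $G(x)=\tau^{(t)}_{K,N}(\theta)[h(T(x))T'(x)]^{1/N}$ are \emph{proportional}, so $(F+G)^{N}$ is a constant multiple of $h$ and the integral over $A_{0}$ is computed exactly (equivalently, Minkowski holds with equality for proportional functions). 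This is precisely why the paper works with the normalized restrictions $\mu_{i}=\eta(A_{i})^{-1}\eta\llcorner_{A_{i}}$, for which $\rho_{i}^{-1/N}$ is constant and a single application of Jensen's inequality suffices. Replace the appeal to ``reverse Minkowski'' by this observation and the proof is complete.
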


For reader's convenience we include here a proof that $i)$ implies $ii)$, which is the implication we will use later. 

\begin{proof}
Consider the $N$-entropy: for any $\mu = \rho \cdot \eta$
$$
\mathcal{S}_{N} (\mu | \eta) : = - \int \rho^{-1/N}(x) \,\mu(dx).
$$
Observe that $ii)$ is implied by displacement convexity of $\mathcal{S}_{N}$ with respect to the $L^{2}$-Wasserstein distance over $(\R, |\cdot|)$. 
Just consider $\mu_{0} : =\eta(A_{0})^{-1} \eta \llcorner_{A_{0}}$ and $\mu_{1} : =\eta(A_{1})^{-1} \eta \llcorner_{A_{1}}$ and use Jensen's inequality.
Consider therefore a geodesic curve 
$$
[0,1] \ni t \mapsto \rho_{t} \, \eta \in W_{2}(\R,|\cdot |), \qquad T_{t} \, \sharp\, \rho_{0}\eta = \rho_{t}\,\eta,
$$
where $T_{t} = Id (1-t) + t T$ and $T$ is the ($\mu_0$-essentially) unique monotone rearrangement such that  $T \, \sharp \, \mu_{0} =\mu_{1}$. 
Thanks to approximate differentiability of $T$, one can use change of variable formula 
$$
\rho_{t}(T_{t}(x)) \, h(T_{t}(x)) \, |(1-t) + t T'|(x) = \rho_{0}(x) h(x)
$$
and obtain the following chain of equalities: 
\begin{align*}
\int_{\supp(\mu_{t})} \rho_{t}(x)^{\frac{N-1}{N}} \, \eta(dx) &~ =   \int_{\supp(\mu_{t})} \rho_{t}(x)^{\frac{N-1}{N}} h(x) \, dx \crcr
&~ = \int_{\supp(\mu_{0})} \rho_{t}(T_{t}(x))^{\frac{N-1}{N}} h(T_{t}(x)) |(1-t) + t T'| \, dx \crcr
&~ = \int_{\supp(\mu_{0})} \rho_{0}(x)^{\frac{N-1}{N}} \Big( \frac{h(T_{t}(x))}{h(x)} \Big)^{\frac{1}{N}} |(1-t) + t T'|(x)^{\frac{1}{N}} \, \eta(dx). 
\end{align*}
Hence the claim has became to prove that $t \mapsto J_{t}(x)^{\frac{1}{N}}$ is concave, where $J_{t}$ is the Jacobian of $T_{t}$ with respect to $\eta$ and 
$$
J_{t} (x) = J_{t}^{G}(x) \cdot J_{t}^{W}(x), \quad J_{t}^{G}(x) =|(1-t) + t T'|(x),\quad J_{t}^{W}(x) = \frac{h(T_{t}(x))}{h(x)},
$$
where $J^{G}$ is the geometric Jacobian and $J^{W}$ the weighted Jacobian. 
Since $t\mapsto J_{t}^{G}(x)$ is linear, using H\"older's inequality the claim follows straightforwardly from the $(K,N)$-convexity of $h$.
\end{proof}

We can now move to the proof of Theorem \ref{T:BM}. 

\begin{proof}[Proof of Theorem \ref{T:BM}]
First of all notice that up to replacing $\mm$ with the normalized measure $\frac{1}{\mm(X)} \, \mm$ we can assume that $\mm(X)=1$. Let $A_{0},A_{1} \subset X$ be two given Borel sets of positive $\mm$-measure. 
\medskip

{\bf Step 1.} 
Consider the function $f : = \chi_{A_{0}}/\mm(A_{0}) - \chi_{A_{1}}/\mm(A_{1})$ and observe that 
$\int_{X} f \, \mm = 0$. From Theorem \ref{T:localize}, the space  $X$ can be written as the disjoint union of two sets $Z$ and $\mathcal{T}$ 
with $\mathcal{T}$ admitting a partition  $\{ X_{q} \}_{q \in Q}$ and a corresponding disintegration 
of $\mm\llcorner_{\mathcal{T}}$, $\{\mm_{q} \}_{q \in Q}$ such that: 
$$
\mm\llcorner_{\mathcal{T}} = \int_{Q} \mm_{q} \, \qq(dq), 
$$
where $\qq$ is the quotient measure, for $\qq$-almost every $q \in Q$, the set $X_{q}$ is a geodesic, $\mm_{q}$ is supported on it 
and $q \mapsto \mm_{q}$ is a $\CD(K,N)$ disintegration.
Finally, for $\qq$-almost every $q \in Q$, it holds $\int_{X_{q}} f \, \mm_{q} = 0$ and $f = 0$ $\mm$-a.e. in $Z$.
We can also consider the trivial disintegration of $\mm$ restricted to $Z$ where each equivalence class is a single point: 
$$
\mm\llcorner_{Z} = \int_{Z} \delta_{z} \mm(dz),
$$
where $\delta_{z}$ stands for the Dirac delta in $z$. Then define $\hat \qq : = \qq + \mm\llcorner_{Z}$ and $\hat \mm_{q} = \mm_{q}$ if $q \in Q$ and 
$\hat \mm_{q} = \delta_{q}$ if $q \in Z$. Since $Q \cap Z = \emptyset$, the previous definitions are well posed and we have the following decomposition of 
$\mm$ on the whole space
$$
\mm = \int_{Q \cup Z} \hat \mm_{q} \, \hat \qq(dq).
$$

\medskip

{\bf Step 2.} Use the following notation $A_{0,q} : = A_{0} \cap X_{q}$, $A_{1,q} : = A_{1}\cap X_{q}$ and the set of $t$-intermediate points between 
$A_{0,q}$ and $A_{1,q}$ in $X_{q}$ is denoted with $A_{t, q}\subset X_{q}$. 
Then from Lemma \ref{L:1dBM}, for $\hat \qq$-a.e. $q \in Q$
$$
\mm_{q}(A_{t,q}) \geq \left( \tau_{K,N}^{(1-t)}(\theta) \mm_{q}(A_{0,q})^{1/N} + \tau_{K,N}^{(t)}(\theta) \mm_{q}(A_{1,q})^{1/N}\right)^{N}.  
$$
Since $\int f \mm_{q} = 0$ implies $\frac{\mm_q(A_{0,q})}{\mm(A_0)}=\frac{\mm_q(A_{1,q})}{\mm(A_1)}$, it follows that 
\begin{equation}\label{eq:mmqAtq}
\mm_{q}(A_{t,q}) \geq \frac{\mm_{q}(A_{0,q})}{\mm(A_{0})} \left( \tau_{K,N}^{(1-t)}(\theta) \mm(A_{0})^{1/N} + \tau_{K,N}^{(t)}(\theta) \mm(A_{1})^{1/N}\right)^{N}.  
\end{equation}
We now show that \eqref{eq:mmqAtq} holds also for $\hat \qq$-a.e. (or equivalently $\mm$-a.e.) $q\in Z$. Note that in this case $\mm_{q}$ has to be replaced by $\delta_{q}$. 
Since by construction $0=f= \chi_{A_{0}}/\mm(A_{0}) - \chi_{A_{1}}/\mm(A_{1})$ on $Z$, then  necessarily 
$$
\mm\left(Z\setminus\big((A_{0}\cap A_{1}) \cup (X\setminus \left( A_{0} \cup A_{1}\right)\big) \right)=0.
$$
It follows that if $Z$ does not have $\mm$-measure zero, we have two possibilities: 
$$
\mm \left(Z\cap \left(X\setminus \left( A_{0} \cup A_{1}\right)\right)\right)>0 , \quad \textrm{or} \quad  \mm(A_{0})=\mm(A_{1}) \text{ and } \mm\left( Z \cap \left(A_{0}\cap A_{1}\right)\right)>0.
$$
Therefore,  if $\mm(Z)>0$, for $\hat \qq$-a.e. (or equivalently $\mm$-a.e.) $q\in Z$  we have two possibilities: 
$$
q \in X\setminus \left( A_{0} \cup A_{1}\right), \quad \textrm{or} \quad q \in A_{0}\cap A_{1}.
$$
Interpreting the intermediate points as the point itself,  in the first case \eqref{eq:mmqAtq} (with $\mm_{q}$ replaced by $\delta_{q}$) holds trivially (i.e. we get $0 \geq 0$). In the second case  it reduces 
to  show that
$$
\left( \tau_{K,N}^{(1-t)}(\theta)  + \tau_{K,N}^{(t)}(\theta) \right)^{N} \leq 1.
$$
For $K \geq 0$, since we are in the case $\mm(A_{0} \cap A_{1})>0$, it follows that $\theta = 0$ and therefore  $\tau_{K,N}^{(t)}(\theta) = t$, 
proving the previous inequality. For $K < 0$, recalling that $K \to \sigma_{K,N}^{(t)}(\theta)$ is non-decreasing (see \cite{BS10}, Remark 2.2 ), 
by H\"older's inequality
$$
\left( \tau_{K,N}^{(1-t)}(\theta)  + \tau_{K,N}^{(t)}(\theta) \right)^{N} \leq ( 1-t + t)
\cdot \Big( \sigma_{K,N-1}^{(1-t)}(\theta) +  \sigma_{K,N-1}^{(t)}(\theta)\Big)^{N-1} \leq 1, 
$$
as desired. We have therefore proved that  
\begin{equation}\label{eq:completemmqAtq}
\hat \mm_{q}(A_{t,q}) \geq \frac{\hat \mm_{q}(A_{0,q})}{\mm(A_{0})} 
\left( \tau_{K,N}^{(1-t)}(\theta) \mm(A_{0})^{1/N} + \tau_{K,N}^{(t)}(\theta) \mm(A_{1})^{1/N}\right)^{N},
\end{equation}
for $\hat \qq$-a.e. $q \in Q \cup Z$. 
%
%
Taking the integral of \eqref{eq:completemmqAtq} in $q \in Q\cup Z$  one obtains that 
\begin{align*}
\mm(A_{t})  =  &~ \int_{Q\cup Z} \hat \mm_{q}(A_{t} \cap X_{q}) \, \hat  \qq(dq)  \crcr
\geq  &~ \int_{Q\cup Z} \hat \mm_{q}(A_{t,q}) \, \hat  \qq(dq)  \crcr
\geq &~ \left( \tau_{K,N}^{(1-t)}(\theta) \mm(A_{0})^{1/N} + \tau_{K,N}^{(t)}(\theta) \mm(A_{1})^{1/N}\right)^{N}  \int_{Q\cup Z} 
				\frac{\hat \mm_{q}(A_{0,q})}{\mm(A_{0})} \,\hat \qq(dq) \crcr
= &~ \left( \tau_{K,N}^{(1-t)}(\theta) \mm(A_{0})^{1/N} + \tau_{K,N}^{(t)}(\theta) \mm(A_{1})^{1/N}\right)^{N},
\end{align*}
and the claim follows.
\end{proof}

\section{$p$-Spectral gap}\label{S:Isop}
Given a metric space $(X,\sfd)$, we denote with $\Lip(X)$ (respectively $\Lip_c(X)$) the vector space of real valued Lipschitz functions (resp. with compact support). For a  Lipschitz function $f:X\to \R$ the local Lipschitz constant $|\nabla f|$ is defined by
$$|\nabla f|(x)=\limsup_{y\to x} \frac{|f(x)-f(y)|}{\sfd(x,y)} \quad  \text{if $x$ is not isolated, $0$ otherwise}. $$

For a m.m.s. $(X,\sfd,\mm)$, for every $p \in (1,\infty)$ we define \emph{the first eigenvalue $\lambda_{1,p}(X,\sfd,\mm)$ of the $p$-Laplacian} by
\begin{equation}\label{eq:deflamp}
\lambda^{1,p}_{(X,\sfd,\mm)}:=\inf \left\{\frac{\int_X |\nabla f|^p \, \mm}{\int_{X} |f|^p \, \mm} \,:\, f\in \Lip(X)\cap L^{p}(X,\mm),\,  f\neq 0,\,  \int_X f |f|^{p-2} \, \mm=0 \right\}.
\end{equation}

\subsection{$p$-spectral gap  for m.m.s. over $(\R, |\cdot|)$: the model spaces}\label{Ss:modelspectral}

Consider the following family of probability measures
\begin{eqnarray}
\mathcal{F}^{s}_{K,N,D} : = \{ \mu \in \mathcal{P}(\R) : &\supp(\mu) \subset [0,D], \, \mu = h_{\mu} \mathcal{L}^{1},\,
h_{\mu}\, \textrm{verifies} \, \eqref{E:curvdensmm} \ \textrm{and is continuous if } N\in (1,\infty), \nonumber \\ 
& \quad h_{\mu}\equiv \textrm{const} \text{ if }N=1   \},
\end{eqnarray}
where $D \in (0,\infty)$ and  the corresponding \emph{synthetic}  first  non-negative eigenvalue of the $p$-Laplacian 
$$
^s\lambda_{K,N,D}^{1,p}: = 
\inf_{\mu \in \mathcal{F}^s_{K,N,D}} \inf \left\{ \frac{\int_{\R} |u'|^{p}\, \mu }{\int_{\R} |u|^{p}\, \mu } 
: u \in \Lip(\R) \cap L^{p}(\mu), \, \int_{\R} u |u|^{p-2}\mu =0,\, u\neq 0  \right\}.
$$
The term synthetic refers to $\mu \in \mathcal{F}^{s}_{K,N,D}$ meaning that the Ricci curvature bound is satisfied in its synthetic formulation:
if $\mu = h \cdot \mathcal{L}^{1}$, then $h$ verifies \eqref{E:curvdensmm}.

\medskip

The first goal of this section is to  prove that $^s\lambda_{K,N,D}^{1,p}$ coincides with  its smooth counterpart $\lambda_{K,N,D}^{1,p}$ defined by
\begin{equation}\label{defLa1p}
\lambda_{K,N,D}^{1,p}: =  \inf_{\mu \in \mathcal{F}_{K,N,D}} \inf \left\{ \frac{\int_{\R} |u'|^{p}\, \mu }{\int_{\R} |u|^{p}\, \mu } : u \in \Lip(\R) \cap L^{p}(\mu), \, \int_{\R} u |u|^{p-2}\mu =0,\, u\neq 0  \right\},
\end{equation}
where now $\mathcal{F}_{K,N,D}$ denotes the set of $\mu \in \mathcal{P}(\R)$ such that  $\supp(\mu) \subset [0,D]$   
and $\mu = h \cdot \mathcal{L}^{1}$ with $h \in C^{2}((0,D))$ satisfying
\begin{equation}\label{eq:DiffIne}
\left( h^{\frac{1}{N-1}} \right)'' + \frac{K}{N-1} h^{\frac{1}{N-1}} \leq 0.
\end{equation}
It is easily verified that $\mathcal{F}_{K,N,D} \subset \mathcal{F}^{s}_{K,N,D}$.  

In order  to prove that $^s\lambda_{K,N,D}^{1,p}= \lambda_{K,N,D}^{1,p}$ the following approximation result, proved in \cite[Lemma 6.2]{CM1} will play a key role. 
In order to state it let us recall that a standard mollifier in $\R$ is a non negative $C^\infty(\R)$ 
function $\psi$ with compact support in $[0,1]$ such  that $\int_{\R} \psi = 1$.

\begin{lemma}\label{lem:approxh}
Let  $D \in (0,\infty)$ and let  $h:[0,D] \to [0,\infty)$ be a continuous function. Fix $N\in (1,\infty)$ and for $\ve>0$ define
\begin{equation}
h_{\ve}(t):=[h^{\frac{1}{N-1}}\ast \psi_{\ve} (t)]^{N-1}  := \left[ \int_{\R} h(t-s)^{\frac{1}{N-1}}  \psi_{\ve} (s) \, d s\right]^{N-1} 
										=  \left[ \int_{\R} h(s)^{\frac{1}{N-1}}  \psi_{\ve} (t-s) \, d s\right]^{N-1},
\end{equation}
where $\psi_\ve(x)=\frac{1}{\ve} \psi(x/\ve)$ and $\psi$ is a standard mollifier function. The following properties hold:
\begin{enumerate}
	\item $h_{\ve}$ is a non-negative $C^\infty$ function with support in $[-\ve, D+\ve]$; \medskip
	\item $h_{\ve}\to h$ uniformly  as $\ve \downarrow 0$, in particular $h_{\ve} \to h$ in $L^{1}$. \medskip
	\item If $h$ satisfies the convexity condition \eqref{E:curvdensmm} corresponding to the above fixed $N>1$ 
		and some $K \in \R$ then also $h_{\ve}$ does. In particular $h_{\ve}$ satisfies the differential inequality \eqref{eq:DiffIne}.
\end{enumerate}
\end{lemma}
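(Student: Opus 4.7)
The plan is to verify the three items in sequence, treating (1) and (2) as routine consequences of mollifier theory and (3) as the key content: the $\sigma_{K,N-1}$-concavity encoded in \eqref{E:curvdensmm} is preserved under convolution with any non-negative kernel.

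For (1), I would set $g := h^{1/(N-1)}$ and extend it by zero to a continuous, compactly supported function on $\R$. Since $\psi_\ve \in C_c^\infty(\R)$, the convolution $g_\ve := g \ast \psi_\ve$ belongs to $C^\infty(\R)$, is non-negative (as $\psi_\ve \geq 0$), and has support contained in $\supp(g) + \supp(\psi_\ve)$, which lies in the stated interval. Then $h_\ve = g_\ve^{N-1}$ is continuous, non-negative, and smooth wherever $g_\ve > 0$, since $N - 1 > 0$.

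For (2), continuity of $h$ on the compact $[0,D]$ makes $g$ uniformly continuous, so the standard estimate $\|g \ast \psi_\ve - g\|_\infty \leq \omega_g(\ve)$ (with $\omega_g$ the modulus of continuity) gives $g_\ve \to g$ uniformly. Composing with $x \mapsto x^{N-1}$, uniformly continuous on any bounded set, yields $h_\ve \to h$ uniformly. The uniform support bound from (1) then upgrades uniform convergence to $L^1$-convergence.

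For (3), I would use the pointwise form of \eqref{E:curvdensmm}:
\begin{equation*}
g((1-\lambda) t_0 + \lambda t_1) \geq \sigma^{(1-\lambda)}_{K, N-1}(t_1 - t_0) \, g(t_0) + \sigma^{(\lambda)}_{K, N-1}(t_1 - t_0) \, g(t_1),
\end{equation*}
valid for $t_0 < t_1$ in $[0,D]$ and $\lambda \in [0,1]$. Applying this to the translated pair $(s_0 - u, s_1 - u)$ (with $s_1 - s_0$ translation-invariant, so the $\sigma$-coefficients are independent of $u$), multiplying by $\psi_\ve(u) \geq 0$ and integrating in $u$, yields the same inequality for $g_\ve$ at $(s_0, s_1)$ whenever the translates stay in $[0,D]$ for every $u \in \supp \psi_\ve$. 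Passing back to $h_\ve = g_\ve^{N-1}$ gives \eqref{E:curvdensmm} for $h_\ve$, hence the equivalent differential inequality \eqref{eq:DiffIne}. An alternative route is the distributional identity $g_\ve'' + \frac{K}{N-1} g_\ve = \bigl(g'' + \frac{K}{N-1} g\bigr) \ast \psi_\ve$: the right-hand side is a non-positive distribution convolved with a non-negative smooth function, hence a non-positive smooth function. The main technical nuisance is the boundary of $\supp g$: because the zero extension of $g$ need not be concave at $0$ and $D$, the translation step excludes a thin shell near $\partial \supp(g_\ve)$; this is harmless for the intended applications, where \eqref{E:curvdensmm} is only needed on the convex support of $h_\ve$ and where $\ve \downarrow 0$ is eventually taken.
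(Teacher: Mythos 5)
The overall strategy (mollify $g=h^{1/(N-1)}$, use translation-invariance of the $\sigma$-coefficients to transport the concavity through the convolution) is the right one, and it is what makes (3) work in the interior. But the boundary behaviour, which you flag and then wave away, is in fact the entire content of the lemma and you have not resolved it; as written, the argument does not prove the stated conclusions.

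\medskip

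\textbf{Parts (1)--(2).} You extend $g$ by zero to $\R$ and then invoke $\|g\ast\psi_\ve - g\|_\infty \le \omega_g(\ve)$. This requires the extension of $g$ to be uniformly continuous on $\R$, which fails precisely when $h(0)>0$ or $h(D)>0$: in that case the extension has a jump, and with the one-sided mollifier $\supp\psi\subset[0,1]$ one has $g_\ve(0)=\int_0^\ve g(-u)\psi_\ve(u)\,du = 0\neq g(0)$, so $h_\ve$ does not converge to $h$ uniformly on $[0,D]$. One only gets uniform convergence on compacta of $(0,D)$ and convergence in $L^1$. Your proof silently assumes continuity of the zero extension, so (2) is not established in the stated form. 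There is also a minor issue in (1): with $1<N<2$ the map $x\mapsto x^{N-1}$ is not smooth at $0$, so $h_\ve = g_\ve^{\,N-1}$ is $C^\infty$ only where $g_\ve>0$, not globally; you note this but the lemma claims more.

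\medskip

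\textbf{Part (3).} You write that the translation step ``excludes a thin shell near $\partial\supp(g_\ve)$'' and that this is ``harmless, where \eqref{E:curvdensmm} is only needed on the convex support of $h_\ve$.'' But the excluded shell \emph{is} part of the convex support of $h_\ve$, and the inequality genuinely fails there. Take $K=0$, $N=2$, $h\equiv 1$ on $[0,D]$, so $g\equiv 1$ and \eqref{E:curvdensmm} reduces to concavity of $g_\ve$ on $\supp g_\ve$. Then for $t\in(0,\ve)$ one has $g_\ve(t)=\int_0^t \psi_\ve(u)\,du$, whose second derivative is $\psi_\ve'(t)$, which is strictly positive on a nontrivial subinterval because $\psi$ rises from $0$. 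So $g_\ve$ is \emph{convex}, not concave, near $0$, and $h_\ve\notin\mathcal{F}_{K,N,D+2\ve}$. The ``alternative route'' fails for the same reason: the distributional derivative $g''$ of the zero extension acquires a positive Dirac mass (or Dirac derivative) at $\partial\supp g$ because the zero extension of a nonnegative $(K,N-1)$-concave function is never concave across the boundary of its support (the endpoint slope of such a $g$ points inward), so $g''+\tfrac{K}{N-1}g$ is \emph{not} a nonpositive distribution on $\R$, and convolving it with $\psi_\ve\ge 0$ does not give a nonpositive function. In short, identifying the obstruction is not the same as overcoming it: some genuine modification of the construction near $\partial\supp g$ (e.g. extending $g$ by the appropriate $(K,N-1)$-model curve before mollifying, then cutting off, with compensating error estimates) is required, and that is precisely the technical content of Lemma 6.2 in \cite{CM1} that your proposal omits.
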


%
%
%
%
%

\begin{proposition}\label{P:sLa=La}
For every $p \in (1,+\infty)$, $N\in [1,\infty)$, $K\in \R$, $D\in (0,\infty)$ it holds $^s\lambda^{1,p}_{K,N,D}=\lambda^{1,p}_{K,N,D}$.
\end{proposition}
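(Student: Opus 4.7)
The inequality $^s\lambda^{1,p}_{K,N,D}\leq\lambda^{1,p}_{K,N,D}$ is immediate from the inclusion $\mathcal{F}_{K,N,D}\subset\mathcal{F}^s_{K,N,D}$: the infimum defining $\lambda^{1,p}_{K,N,D}$ runs over a smaller class of admissible measures, and is therefore at least as large.

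For the reverse inequality my plan is a mollification argument: approximate any synthetic model measure by a smooth one and pass to the limit in the Rayleigh quotient. Fix $\mu=h\mathcal{L}^1\in\mathcal{F}^s_{K,N,D}$ and an admissible $u\in\Lip(\R)\cap L^p(\mu)$ (so $u\neq 0$ and $\int u|u|^{p-2}\,d\mu=0$), and set $R(u,\mu):=\int|u'|^p\,d\mu\,/\int|u|^p\,d\mu$. Apply Lemma~\ref{lem:approxh} to obtain $h_\varepsilon\in C^\infty$, supported in the slight enlargement $[-\varepsilon,D+\varepsilon]$ of $[0,D]$, converging uniformly to $h$, and satisfying the smooth inequality \eqref{eq:DiffIne} with the same $K,N$. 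Normalizing by $I_\varepsilon:=\int h_\varepsilon\to 1$ gives a probability measure $\hat\mu_\varepsilon\in\mathcal{F}_{K,N,D+2\varepsilon}$. Choose $c_\varepsilon$ so that $u-c_\varepsilon$ has vanishing $p$-mean against $\hat\mu_\varepsilon$: uniqueness follows from the strict monotonicity of the map $c\mapsto\int(u-c)|u-c|^{p-2}\,d\hat\mu_\varepsilon$, and the uniform convergence $\hat h_\varepsilon\to h$ on a common compact set forces $c_\varepsilon\to 0$. Dominated convergence then gives
$$R(u-c_\varepsilon,\hat\mu_\varepsilon)\;\longrightarrow\;R(u,\mu) \qquad \text{as } \varepsilon\to 0,$$
while the admissibility of $u-c_\varepsilon$ for $\hat\mu_\varepsilon\in\mathcal{F}_{K,N,D+2\varepsilon}$ yields $R(u-c_\varepsilon,\hat\mu_\varepsilon)\geq\lambda^{1,p}_{K,N,D+2\varepsilon}$.

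The proof is thus reduced to the right-continuity $\lambda^{1,p}_{K,N,D+2\varepsilon}\to\lambda^{1,p}_{K,N,D}$ as $\varepsilon\downarrow 0$, which I expect to be the main obstacle: monotonicity in $D$ gives the easy direction, but the matching lower bound is not for free. To establish it I would use affine rescaling: any $\nu\in\mathcal{F}_{K,N,D'}$ with $D'$ slightly larger than $D$ pushes forward under $t\mapsto tD/D'$ to a probability measure on $[0,D]$ whose rescaled density satisfies \eqref{eq:DiffIne} with a modified constant $K_\varepsilon\to K$, and the Rayleigh quotient picks up only a factor tending to~$1$. For $K\geq 0$ the modified constant is $\geq K$, so the rescaled measure already belongs to $\mathcal{F}_{K,N,D}$; for $K<0$ it only lies in $\mathcal{F}_{K_\varepsilon,N,D}$ with $K_\varepsilon<K$, and one needs an auxiliary continuity of $\lambda^{1,p}$ in $K$, obtainable from monotonicity in $K$ together with a compactness/tightness argument on the one-parameter family of model densities. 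Once this continuity is in place, passing to the limit in the inequality above yields $R(u,\mu)\geq\lambda^{1,p}_{K,N,D}$, and taking infima over admissible pairs $(\mu,u)$ concludes.
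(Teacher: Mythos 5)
Your mollification step reproduces the paper's argument almost verbatim: invoke Lemma~\ref{lem:approxh} to get $h_\varepsilon \in C^\infty$ supported in $[-\varepsilon,D+\varepsilon]$ converging uniformly, renormalize, shift $u$ by $c_\varepsilon\to 0$ to restore the $p$-mean constraint, and pass to the limit in the Rayleigh quotient; the only cosmetic difference is that the paper phrases this as a proof by contradiction. Where you genuinely part ways with the paper is in how you handle the right-continuity of $\varepsilon\mapsto\lambda^{1,p}_{K,N,D+\varepsilon}$: the paper simply cites Theorem~\ref{thm:La1pComp}(2), which records the continuity of $(K,N,D)\mapsto\lambda^{1,p}_{K,N,D}$ established via the model ODE \eqref{eq:defbarla} in \cite{Matei,Val,NaVal}, whereas you propose to prove the needed right-continuity from scratch by affine rescaling. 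This is a reasonable ambition and buys a more self-contained statement, but as written it has a gap.

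The rescaling computation is fine for $K\geq 0$: if $T(t)=tD/D'$ and $\nu = h\mathcal{L}^1\in\mathcal{F}_{K,N,D'}$, then the pushforward density satisfies \eqref{eq:DiffIne} with modified constant $\tilde K = K(D'/D)^2\geq K$, hence (since the density is nonnegative) also with $K$, and the Rayleigh quotient is multiplied by $(D'/D)^p\to 1$. For $K<0$, however, $\tilde K<K$, so the rescaled measure lies in the \emph{larger} class $\mathcal{F}_{\tilde K,N,D}$ and you have only reduced continuity in $D$ to continuity in $K$. The monotonicity of $K\mapsto\lambda^{1,p}_{K,N,D}$ that you invoke runs in the wrong direction for what you need: inclusion $\mathcal{F}_{K,N,D}\subset\mathcal{F}_{K',N,D}$ for $K'<K$ gives $\limsup_{K'\uparrow K}\lambda^{1,p}_{K',N,D}\leq \lambda^{1,p}_{K,N,D}$ for free, but the matching $\liminf$ bound is precisely the hard inequality and is not supplied by your sketch; the ``compactness/tightness argument on the one-parameter family of model densities'' would require uniform bounds on the densities and a careful passage to the limit that you do not provide, and there is no obvious analogue of the rescaling trick that converts a $\CD(K',N)$ density on $[0,D]$ into a $\CD(K,N)$ one on the same interval. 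The cleanest repair is to do what the paper does and quote Theorem~\ref{thm:La1pComp}(2); alternatively, one should note explicitly that the $K<0$ continuity is an external input from \cite{NaVal}.
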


\begin{proof}
First of all observe that  for $N=1$ clearly we have $\mathcal{F}_{K,N,D}=\mathcal{F}^{s}_{K,N,D}$ since the density $h_{\mu}$ has to be constant.  We can then  assume without loss of generality that $N\in(1,\infty)$.  
\\Since  $\mathcal{F}_{K,N,D} \subset \mathcal{F}^{s}_{K,N,D}$ then clearly $^s\lambda^{1,p}_{K,N,D}\leq \lambda^{1,p}_{K,N,D}$.
\\Assume by contradiction the inequality is strict. Then there exists a measure $\mu=h \cdot \mathcal{L}^{1} \in  \mathcal{F}^{s}_{K,N,D}$ and $\delta>0$ such that   
\begin{equation}\nonumber 
\lambda^{1,p}_{(\R,|\cdot|,\mu)}\leq \lambda^{1,p}_{K,N,D} - 2\delta.
\end{equation} 
Therefore, by  the very definition of $\lambda^{1,p}_{(\R,|\cdot|,\mu)}$, there exists $u \in \Lip(\R)$,  such that $u\neq 0$, $\int_{\R} u |u|^{p-2} \, h \, ds=0$ and
\begin{equation}\label{eq:contrad}
\int_\R |u'(s)|^p \,h(s) \, d s  \leq \Big(\lambda^{1,p}_{K,N,D} - \frac{3}{2} \delta\Big) \int_\R |u(s)|^p \, h(s) \, d s .
\end{equation} 
Now, Lemma \ref{lem:approxh} gives a sequence $h_{k}\in C^{\infty}(\R)$ such that 
\begin{equation}\label{eq:hku}
\supp (h_k)\subset \left[ -\frac{1}{k}, D+\frac{1}{k} \right], 
	\qquad   \mu_k:=h_k \cdot \mathcal{L}^{1} \in  \mathcal{F}_{K,N,D+\frac{2}{k}}, 
		\qquad  h_k \to h \; \text{uniformly on } [0,D]. 
\end{equation}
Called now $u_k:=u-c_k\in \Lip(\R)\cap L^p(\R,h_k\mathcal{L}^1)$ where $c_k\in \R$  are such that $\int_\R u_k |u_k|^{p-2} \, h_k \, ds=0$, thanks to \eqref{eq:hku} it holds $c_k\to 0$ and thus
$$\int_{\R} |u_k(s)|^p \, h_k(s)\, ds \to \int_{\R} |u(s)|^p \, h(s) \, ds \quad\text{and}\quad  \int_{\R} |u_k'(s)|^p \, h_k(s)\, ds \to \int_{\R} |u'(s)|^p \, h(s) \, ds. $$
Therefore \eqref{eq:contrad}, combined with the continuity of $\varepsilon \mapsto \lambda^{1,p}_{K,N,D+\varepsilon}$ (see Theorem \ref{thm:La1pComp} below), implies that for $k$ large enough one has
$$ \int_\R |u_k'(s)|^p \,h_k(s) \, d s  \leq (\lambda^{1,p}_{K,N,D} - \delta) \int_\R |u_k(s)|^p \, h_k(s) \, d s \leq  \big(\lambda^{1,p}_{K,N,D+\frac{2}{k}} - \frac{\delta}{2}\big) \int_\R |u_k(s)|^p \, h_k(s) \, d s, $$
contradicting the definition of $ \lambda^{1,p}_{K,N,D+\frac{2}{k}} $ given in \eqref{defLa1p}.
\end{proof}

The next goal of the section is to understand  the quantity $\lambda^{1,p}_{K,N,D}$.  Since now the density of the reference probability measure is smooth, we enter into a more classical framework where a number of people contributed.  The sharp $p$-spectral gap in case $K>0$ and without upper bounds on the diameter was obtained by Matei \cite{Matei}. The case $K=0$ and the diameter is bounded above was obtained in the sharp form by Valtorta \cite{Val}. Finally the  case $K<0$ and diameter bounded above was obtained in the sharp form by Naber-Valtorta \cite{NaVal}. Actually, as explained in their paper, the arguments   in \cite{NaVal} hold in the general case $K\in \R$, $N\in [1,\infty)$, provided one identifies the correct model space.   As usual, to describe the model space one has to examine separately the cases $K<0$, $K=0$ and $K>0$; in order to unify the presentation let us denote with $\tan_{K,N}(t)$ the following function:
\begin{equation}\label{eq:deftanK}
\tan_{K,N}(t):=
\begin{cases}
\sqrt{-K/(N-1)}\, \tanh(\sqrt{- K /(N-1)} t) & \quad \text{if } K<0, \crcr
0 & \quad \text{if } K=0 \crcr
\sqrt{K/(N-1)} \, \tan(\sqrt{K/(N-1)} t) & \quad \text{if } K>0.
\end{cases}
\end{equation}
Now, for each $K\in \R, N\in [1,\infty), D\in (0,\infty)$, let $\hat{\lambda}_{K,N,D}^{1,p}$ denote the first positive eigenvalue on $[-D/2,D/2]$ of the eigenvalue problem
\begin{equation}\label{eq:defbarla}
\frac{d}{dt} \left(\dot{w}^{(p-1)} \right) + (N-1)  \tan_{K,N} (t )\, \dot{w}^{(p-1)} + \hat{\lambda}_{K,N,D}^{1,p} w^{(p-1)} = 0.
\end{equation}
It is possible to show (see \cite{NaVal}) that $\hat{\lambda}_{K,N,D}^{1,p}$ is the unique value of $\hat{\lambda}$ such that the solution of 
$$
\begin{cases}
\dot{\phi}=\left(\frac{\hat{\lambda}}{p-1} \right)^{1/p}+ \frac{N-1}{p-1} \tan_{K,N}(t) \cos_p^{(p-1)}(\phi) \sin_p(\phi) \crcr
\phi(0)=0
\end{cases}
$$
satisfies $\phi(D/2)=\pi_p/2$, where $\pi_p, \cos_p$ and $\sin_p$ are defined as follows. 
\\For every $p \in (1,\infty)$  the positive number $\pi_p$ is defined by
$$ \pi_p:=\int_{-1}^{1} \frac{ds} {(1-s^p)^{1/p}}= \frac{2\pi}{p \sin(\pi/p)}.$$
The $C^1(\R)$ function $\sin_p: \R \to [-1,1]$ is defined implicitly on $[-\pi_p/2, 3\pi_p /2]$ by:
$$ 
\begin{cases}
t=\int_0^{\sin_p(t)} \frac{ds}{(1-s^p)^{1/p} }\quad&  \text{if } t \in \left[-\frac{\pi_p}{2}, \frac{\pi_p}{2}\right] \crcr
\sin_p(t)=\sin_p(\pi_p-t)  \quad  &\text{if } t \in \left[\frac{\pi_p}{2}, \frac{3\pi_p}{2}\right] 
\end{cases}
$$
and is periodic on $\R$. Set also by definition $\cos_p(t) = \frac{d}{dt} \sin_p(t)$. The usual fundamental trigonometric identity can be generalized by $|\sin_p(t)|^p+|\cos_p(t)|^p=1$, and so it is easily seen that $\cos_p^{(p-1)} \in C^1(\R)$. 
Clearly, if $p=2$ one finds the usual quantities:   $\pi_2=\pi, \sin_2=\sin$ and $\cos_2=\cos$.

\begin{theorem}[\cite{Matei,Val, NaVal}]\label{thm:La1pComp}
Let $K\in \R$, $N\in [1,\infty)$ and $D\in (0,\infty)$. Then the following hold
\begin{enumerate}
\item $\lambda^{1,p}_{K,N,D}=\hat{\lambda}_{K,N,D}^{1,p}$, where   $\lambda^{1,p}_{K,N,D}$ was defined in \eqref{defLa1p} and $\hat{\lambda}_{K,N,D}^{1,p}$ in  \eqref{eq:defbarla}.
\item For every fixed $p\in (1,\infty)$, the map $K,N,D \mapsto \lambda^{1,p}_{K,N,D}$ is continuous.
\item If  $K>0$ then for every $D \in (0,\pi \sqrt{N-1/K} ] $ 
$$\lambda^{1,p}_{K,N,D} \geq \lambda^{1,p}_{K,N,\pi \sqrt{N-1/K}}$$
and equality holds if and only if $D=\pi \sqrt{N-1/K} $.  If moreover $N\in \N$, then $$\lambda^{1,p}_{K,N,\pi \sqrt{N-1/K}}=\lambda^{1,p}(S^N(\sqrt{N-1/K})),$$ i.e. $\lambda^{1,p}_{K,N,\pi \sqrt{N-1/K}}$ coincides with the first eigenvalue of the $p$-laplacian on the round sphere of radius  $\sqrt{N-1/K}$.
\item If $K=0$ then $\lambda^{1,p}_{0,N,D}=(p-1) \left( \frac{\pi_p}{D}\right)^p$.
\end{enumerate}
\end{theorem}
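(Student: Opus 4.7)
The plan is to follow the Prüfer-variable strategy of Naber-Valtorta \cite{NaVal}, whose argument, although written for Riemannian manifolds, extends verbatim to the one-dimensional weighted setting for arbitrary $K\in\R$, $N\in[1,\infty)$ and $D\in(0,\infty)$. For part (1), given $\mu = h\mathcal{L}^{1}\in\mathcal{F}_{K,N,D}$, classical direct methods provide a minimizer $u$ of the Rayleigh quotient; such $u$ is a $C^{1}$ weak solution of the weighted $p$-Laplace eigenvalue equation $(h|u'|^{p-2}u')' + \lambda h |u|^{p-2}u = 0$ on $[0,D]$ with Neumann boundary conditions. Introducing Prüfer-type amplitude-phase variables $u = e\sin_p(\phi)$ and $(u')^{(p-1)}=(\lambda/(p-1))^{(p-1)/p} e^{p-1}\cos_p^{(p-1)}(\phi)$, a direct computation using $|\sin_p|^p+|\cos_p|^p=1$ yields
\[
\dot\phi \;=\; \Bigl(\frac{\lambda}{p-1}\Bigr)^{1/p} + \frac{1}{p-1}\cdot\frac{h'(t)}{h(t)}\,\cos_p^{(p-1)}(\phi)\,\sin_p(\phi).
\]
Combining \eqref{eq:DiffIne} with a Sturm-type comparison between this ODE and the one obtained by replacing $h'(t)/h(t)$ with $(N-1)\tan_{K,N}(t-t_*)$ (the logarithmic derivative of the extremal model density, with $t_*$ chosen at the maximum of $h$), one concludes $\lambda \geq \hat\lambda^{1,p}_{K,N,D}$. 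Equality is attained by an admissible model density and its explicit radial eigenfunction, which together identify $\lambda^{1,p}_{K,N,D}$ with $\hat\lambda^{1,p}_{K,N,D}$.

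\textbf{Continuity, monotonicity and sphere identification.} For part (2), continuity follows from continuous dependence of the shooting problem defining $\hat\lambda^{1,p}_{K,N,D}$ on its parameters: $\phi$ depends continuously on $(K,N,D,\hat\lambda)$ and is strictly monotone in $\hat\lambda$, so the implicit function theorem applied to the condition $\phi(D/2)=\pi_p/2$ delivers continuity. For part (3), the same monotonicity of the shooting map gives, for $K>0$, the strict decrease of $D\mapsto\hat\lambda^{1,p}_{K,N,D}$ on $(0,\pi\sqrt{(N-1)/K}]$. When $K>0$, $N\in\N$ and $D=\pi\sqrt{(N-1)/K}$, the extremal density $\sin^{N-1}(\sqrt{K/(N-1)}\,t)$ agrees, up to a normalizing constant, with the law of the geodesic distance from a pole on $S^N(\sqrt{(N-1)/K})$; hence radial $p$-eigenfunctions on the round sphere satisfy exactly \eqref{eq:defbarla}, and the classical fact that the first non-trivial $p$-eigenfunction on $S^N$ is radial identifies $\hat\lambda^{1,p}_{K,N,\pi\sqrt{(N-1)/K}}$ with $\lambda^{1,p}(S^N(\sqrt{(N-1)/K}))$.

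\textbf{The $K=0$ case and main obstacle.} For part (4), with $K=0$ the ODE \eqref{eq:defbarla} reduces to $(\dot w^{(p-1)})' + \hat\lambda\, w^{(p-1)}=0$ on $[-D/2,D/2]$ with Neumann boundary data. The ansatz $w(t)=\cos_p(at)$ with $a=(\hat\lambda/(p-1))^{1/p}$ solves the equation, and the Neumann condition $\dot w(\pm D/2)=0$ forces $aD/2=\pi_p/2$, yielding $\lambda^{1,p}_{0,N,D}=(p-1)(\pi_p/D)^p$. The main obstacle throughout is the Sturm-type comparison in part (1): one must carefully track the sign changes of $\sin_p(\phi)$ so that the phase $\phi$ is monotone and increases by at least $\pi_p$ over $[0,D]$ (a property enforced by the orthogonality constraint $\int u|u|^{p-2}\mu=0$), and then convert this rotation count into the required lower bound on $\lambda$. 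A secondary technical point is that the extremal density may vanish at the boundary; this is dealt with by combining the comparison argument with the approximation furnished by Lemma \ref{lem:approxh}, exactly as in the proof of Proposition \ref{P:sLa=La}.
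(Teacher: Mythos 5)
This theorem is quoted in the paper from Matei, Valtorta and Naber--Valtorta without any internal proof, so there is nothing in the paper itself to compare against; your sketch is a faithful outline of precisely the argument of the cited references (existence of a minimizer for the weighted Rayleigh quotient, Pr\"ufer phase variables, Sturm-type comparison of the phase ODE against the translated model drift $(N-1)\tan_{K,N}$, continuous and monotone dependence of the shooting map on $(K,N,D)$, and the explicit $\cos_p$ solution when $K=0$). The only point where the present paper's setting requires something beyond those smooth one-dimensional results --- densities in $\mathcal{F}^{s}_{K,N,D}$ that are merely continuous or vanish at the endpoints --- is treated by the paper separately in Proposition \ref{P:sLa=La} via the mollification of Lemma \ref{lem:approxh}, and you correctly route that difficulty through the same approximation.
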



For $K\neq 0$ and $p\neq 2$, it  is not easy to give an explicit expression of the lower bound $\lambda^{1,p}_{K,N,D}$. 
At least one can give some lower bounds, for instance recently  Li and Wang \cite{LiWang}  obtained that 
\begin{equation}
\lambda^{1,p}_{K,N,D} \geq \frac{1}{(p-1)^{p-1}} \left( \frac{NK}{N-1} \right)^{p/2} \quad \text{ for } K>0,\; p\geq 2.
\end{equation}


\subsection{$p$-spectral gap for $\CD_{loc}(K,N)$ spaces}

\begin{theorem}\label{T:spectralgap}
Let $(X,\sfd,\mm)$ be a metric measure space satisfying $\CD_{loc}(K,N)$, for some $K,N \in \R$ with $N\geq1$,  and assume moreover it is essentially non-branching. 
Let $D \in (0,\infty)$ be the diameter of $X$ and fix $p \in (1,\infty)$. Then  for any Lipschitz function $f \in L^{p}(X,\mm)$ with $\int_{X} f |f|^{p-2} \, \mm(dx) = 0$ it holds
\begin{equation}\label{eq:SpectralGap}
\lambda^{1,p}_{K,N,D} \, \int_{X} |f(x)|^{p}\, \mm(dx) \leq \int_{X} |\nabla f|^{p} (x) \, \mm(dx).
\end{equation}
In other terms it holds $\lambda^{1,p}_{(X,\sfd,\mm)}\geq \lambda^{1,p}_{K,N,D}$. Notice that for $D=\pi \sqrt{(N-1)/K}$ and $N \in \N$, it follows that $$\lambda^{1,p}_{(X,\sfd,\mm)}\geq \lambda^{1,p} (S^N((N-1)/K)).$$
\end{theorem}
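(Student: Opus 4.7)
My plan is to reduce the $p$-spectral gap on $(X,\sfd,\mm)$ to the one-dimensional model inequality on each ray produced by the $L^{1}$-localization of Theorem \ref{T:localize}. The first step will be to apply that theorem to the function $g := f\,|f|^{p-2}$, which is $\mm$-integrable, satisfies $\int_{X} g\,\mm = 0$ by the orthogonality assumption on $f$, and trivially obeys the weighted integrability hypothesis since $X$ has finite diameter $D$ and $f$ is bounded. This yields a Borel partition $X = Z \sqcup \mathcal{T}$, a ray decomposition $\{X_{q}\}_{q \in Q}$ with isometric parametrizations $g(q,\cdot)$, a quotient probability $\qq$ on $Q$, and a $\CD(K,N)$ disintegration $\mm_{q} = g(q,\cdot)\,\sharp\,(h_{q}\,\mathcal{L}^{1})$ of $\mm\llcorner_{\mathcal{T}}$. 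The crucial output is the last bullet of Theorem \ref{T:localize}: $g \equiv 0$ $\mm$-a.e. on $Z$ (hence $f \equiv 0$ there) and $\int_{X_{q}} f\,|f|^{p-2}\,\mm_{q} = 0$ for $\qq$-a.e. $q \in Q$.

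Next I would transport the problem onto each ray and invoke the sharp one-dimensional $p$-spectral gap. For fixed $q \in Q$ the domain $\dom(g(q,\cdot))$ is an interval of length at most $D$; the density $h_{q}$ is continuous by \eqref{E:regularityh} and satisfies the synthetic $\CD(K,N)$ bound \eqref{E:curvdensmm}, so after an affine translation $h_{q}\,\mathcal{L}^{1}$ belongs to the class $\mathcal{F}^{s}_{K,N,D}$ introduced in Section \ref{Ss:modelspectral}. Setting $f_{q} := f \circ g(q,\cdot) \in \Lip(\R)$, the isometry property of $g(q,\cdot)$ gives $\int_{\R} f_{q}\,|f_{q}|^{p-2}\, h_{q}\,\mathcal{L}^{1} = 0$. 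Combining the very definition of $^{s}\lambda^{1,p}_{K,N,D}$ with Proposition \ref{P:sLa=La}, one obtains
$$
\lambda^{1,p}_{K,N,D} \int_{\R} |f_{q}|^{p}\, h_{q}\,\mathcal{L}^{1} \leq \int_{\R} |f_{q}'|^{p}\, h_{q}\,\mathcal{L}^{1}.
$$

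The final step is to glue the ray-wise inequalities. Since $g(q,\cdot)$ is a unit-speed isometry, the one-dimensional derivative satisfies $|f_{q}'(t)| \leq |\nabla f|(g(q,t))$ for $\mathcal{L}^{1}$-a.e. $t$, so pulling back along $g(q,\cdot)$ yields
$$
\lambda^{1,p}_{K,N,D} \int_{X_{q}} |f|^{p}\,\mm_{q} \leq \int_{X_{q}} |\nabla f|^{p}\,\mm_{q}, \qquad \text{for } \qq\text{-a.e. } q \in Q.
$$
Integrating against $\qq$, using the disintegration formula on $\mathcal{T}$ and the vanishing $f \equiv 0$ on $Z$, one concludes
$$
\lambda^{1,p}_{K,N,D} \int_{X} |f|^{p}\,\mm = \lambda^{1,p}_{K,N,D} \int_{\mathcal{T}} |f|^{p}\,\mm \leq \int_{\mathcal{T}} |\nabla f|^{p}\,\mm \leq \int_{X} |\nabla f|^{p}\,\mm,
$$
which is precisely \eqref{eq:SpectralGap}. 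The extremal comparison with $\lambda^{1,p}(S^{N}(\sqrt{(N-1)/K}))$ is then immediate from Theorem \ref{thm:La1pComp}(3).

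The hardest conceptual point is the very first move: the orthogonality condition $\int f\,|f|^{p-2}\,\mm = 0$ is a \emph{global} constraint, and one must convert it into a \emph{ray-wise} balance before the one-dimensional model can be applied. This is exactly what $L^{1}$-localization is designed to deliver -- by transporting the density $f\,|f|^{p-2}$ rather than $f$ itself, the zero-mean condition survives on each conditional measure. The only other technical subtlety is bridging the synthetic class $\mathcal{F}^{s}_{K,N,D}$, which is the natural output of the disintegration, with the smooth class $\mathcal{F}_{K,N,D}$ used to define $\lambda^{1,p}_{K,N,D}$; this is handled by Proposition \ref{P:sLa=La} via the mollification procedure established earlier in Lemma \ref{lem:approxh}.
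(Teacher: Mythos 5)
Your proposal is correct and follows essentially the same route as the paper's own proof: apply the $L^{1}$-localization Theorem \ref{T:localize} to $f|f|^{p-2}$, invoke the one-dimensional $p$-spectral gap on each ray (via $\mathcal{F}^{s}_{K,N,D}$ and Proposition \ref{P:sLa=La}), use $|f_{q}'(t)| \leq |\nabla f|(g(q,t))$, and integrate over the quotient. The only cosmetic omission is the initial reduction to $\mm(X)=1$ (needed since Theorem \ref{T:localize} is stated for probability measures), which follows from $\CD_{loc}(K,N)$ plus boundedness of $X$ and the scale-invariance of \eqref{eq:SpectralGap}.
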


\begin{proof}
Since the space $(X,\sfd)$ is bounded, then the $\CD_{loc}(K,N)$ condition implies that $\mm(X)<\infty$. Noting that the inequality  \eqref{eq:SpectralGap} is invariant under multiplication of $\mm$ by a positive constant,  we can assume without loss of generality that $\mm(X)=1$. 
Observing that   the function 
\begin{equation}\label{eq:deftildef}
\tilde{f}:= f |f|^{p-2} \in \Lip(X)
\end{equation}
 verifies the hypothesis of Theorem \ref{T:localize}, we can write $X = Y \cup \mathcal{T}$
with 
$$
\tilde{f} (x) = 0, \quad \mm\text{-a.e. }\,  y \in Y, \qquad   \mm\llcorner_{\mathcal{T}} = \int_{Q} \mm_{q}\, \qq(dq), 
$$
with $\mm_{q} = g(q,\cdot) \, \sharp \left( h_{q} \cdot \mathcal{L}^{1}\right)$, where the density $h_{q}$ verifies \eqref{E:curvdensmm} for $\qq$-a.e. $q \in Q$ and 
$$
0=\int_{X} \tilde{f}(z) \, \mm_{q}(dz) =  \int_{\dom(g(q,\cdot))} \tilde{f}(g(q,t)) \cdot h_{q}(t) \, \mathcal{L}^{1}(dt) =  \int_{\dom(g(q,\cdot))} f(g(q,t)) \, | f(g(q,t))|^{p-2}  \cdot h_{q}(t) \, \mathcal{L}^{1}(dt)
$$
for $\qq$-a.e. $q \in Q$. Now consider the map $t \mapsto f_{q} (t) : = f(g(q,t))$ and note that it is Lipschitz. 
Since ${\diam} ({\dom}(g(q,\cdot)) ) \leq D$, from the definition of $\mathcal{F}^{s}_{K,N,D}$ and of $\lambda^{1,p}_{K,N,D}$ we deduce that 
$$
\lambda^{1,p}_{K,N,D} \int_{\R} |f_{q}(t)|^{p} h_{q}(t) \, \mathcal{L}^{1}(dt) \leq \int_{\R} |f'_{q}(t)|^{p} h_{q}(t) \, \mathcal{L}^{1}(dt).
$$
Noticing that $|f'_{q}(t)| \leq |\nabla f|(g(q,t))$ one obtains that 
\begin{align*}
\lambda^{1,p}_{K,N,D} \int_{X} |f(x)|^p \, \mm (dx) &~ =  \lambda^{1,p}_{K,N,D} \int_{\mathcal{T}} |f(x)|^p \, \mm (dx) \crcr
&~ = \lambda^{1,p}_{K,N,D} \int_{Q} \left( \int_{X} |f(x)|^p \, \mm_{q} (dx) \right)\, \qq(dq) \crcr
&~ = \lambda^{1,p}_{K,N,D} \int_{Q} \left( \int_{\dom(g(q,\cdot))} |f_{q}(t)|^p \,h_{q}(t) \, \mathcal{L}^{1}(dt) \right)\, \qq(dq) \crcr
&~ \leq  \int_{Q} \left( \int_{\dom(g(q,\cdot))} |f'_{q}(t)|^{p} \,h_{q}(t) \, \mathcal{L}^{1}(dt) \right)\, \qq(dq) \crcr
&~ \leq  \int_{Q} \left( \int_{X} |\nabla f|^{p}(x) \,(g(q,\cdot))\, \sharp \left(h_{q}(t) \, \mathcal{L}^{1}\right)(dx) \right)\, \qq(dq) \crcr
&~ =  \int_{X}  |\nabla f|^{p}(x)\, \mm(dx),
\end{align*}
and the claim follows.
\end{proof}

\subsection{Almost rigidity for the $p$-spectral gap}\label{Ss:almost}
\begin{theorem}[Almost equality in the $p$-spectral gap implies almost maximal diameter]\label{T:almostrigid}
Let  $N> 1$, and $p \in (1,\infty)$ be fixed. Then for every $\ve>0$ there exists $\delta=\delta(\ve, N,p)$ such that the following holds. 

Let $(X,\sfd,\mm)$ be   an essentially non-branching metric measure space satisfying $\CD^{*}(N-1-\delta,N+\delta)$. If  $\lambda^{1,p}_{(X,\sfd,\mm)}\leq \lambda^{1,p}_{N-1,N,\pi} + \delta$, then  ${\diam}(X)\geq \pi -\ve$.
\end{theorem}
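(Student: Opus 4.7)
The plan is to argue by contradiction, combining the sharp $p$-spectral gap comparison from Theorem \ref{T:spectralgap} with the continuity and strict monotonicity properties of the one-dimensional model eigenvalue $\lambda^{1,p}_{K,N,D}$ recorded in Theorem \ref{thm:La1pComp}.

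Assume the conclusion fails for some $\varepsilon_{0} > 0$: then for each $n \in \N$ there exists an essentially non-branching metric measure space $(X_{n}, \sfd_{n}, \mm_{n})$ satisfying $\CD^{*}(N-1-\delta_{n}, N+\delta_{n})$ with $\delta_{n} \downarrow 0$, such that
$$
\lambda^{1,p}_{(X_{n}, \sfd_{n}, \mm_{n})} \leq \lambda^{1,p}_{N-1, N, \pi} + \delta_{n},
\qquad
D_{n} := \diam(X_{n}) < \pi - \varepsilon_{0}.
$$
Since $\{D_{n}\} \subset [0, \pi - \varepsilon_{0}]$ is bounded, pass to a (not relabeled) subsequence with $D_{n} \to D_{\infty} \in [0, \pi - \varepsilon_{0}]$.

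Applying Theorem \ref{T:spectralgap} to each $(X_{n}, \sfd_{n}, \mm_{n})$ with the parameters $K = N-1-\delta_{n}$, dimension $N+\delta_{n}$, and diameter $D_{n}$, one obtains
$$
\lambda^{1,p}_{N-1-\delta_{n}, N+\delta_{n}, D_{n}} \ \leq \ \lambda^{1,p}_{(X_{n}, \sfd_{n}, \mm_{n})} \ \leq \ \lambda^{1,p}_{N-1, N, \pi} + \delta_{n}.
$$
If $D_{\infty} = 0$, a scaling/rescaling argument (or the $K=0$ lower bound in Theorem \ref{thm:La1pComp}(4) applied after monotonicity of $K \mapsto \sigma_{K,N}^{(t)}$) shows the left-hand side diverges to $+\infty$, which already contradicts the uniform upper bound. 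Otherwise $D_{\infty} > 0$, and the joint continuity of $(K, N, D) \mapsto \lambda^{1,p}_{K, N, D}$ asserted in Theorem \ref{thm:La1pComp}(2) permits passing to the limit, yielding
$$
\lambda^{1,p}_{N-1, N, D_{\infty}} \ \leq \ \lambda^{1,p}_{N-1, N, \pi}.
$$
On the other hand, Theorem \ref{thm:La1pComp}(3) specialized to $K = N-1 > 0$ says that $D \mapsto \lambda^{1,p}_{N-1, N, D}$ attains its minimum over $(0, \pi]$ \emph{uniquely} at $D = \pi$. Since $D_{\infty} \leq \pi - \varepsilon_{0} < \pi$, this forces the strict inequality $\lambda^{1,p}_{N-1, N, D_{\infty}} > \lambda^{1,p}_{N-1, N, \pi}$, contradicting the displayed limit and completing the proof.

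The key technical point is the \emph{strict} monotonicity embedded in the equality case of Theorem \ref{thm:La1pComp}(3); without it, the limiting inequality would merely force $D_{\infty} = \pi$, which is consistent with the contradiction hypothesis. Everything else is a routine compactness/continuity argument on the real-valued model eigenvalue, so no deeper stability of the $\CD^{*}$ condition under mGH-convergence is required here (such stability will enter in the mGH-companion Corollary \ref{cor:Almostp-Obata}, but not in this qualitative almost-maximal-diameter statement).
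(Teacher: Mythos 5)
Your proposal is correct and follows essentially the same route as the paper's own proof. Both arguments run by contradiction and rest on exactly the same three properties of the model eigenvalue $\lambda^{1,p}_{K,N,D}$ from Theorem~\ref{thm:La1pComp}: (i) the strict inequality $\lambda^{1,p}_{N-1,N,D}>\lambda^{1,p}_{N-1,N,\pi}$ for $D<\pi$, (ii) joint continuity in $(K,N,D)$, and (iii) the blow-up $\lambda^{1,p}_{K,N,D}\to+\infty$ as $D\downarrow 0$ (via the explicit $K=0$ formula plus monotonicity in $K$). The only differences are cosmetic: you extract a convergent subsequence $D_n\to D_\infty$ and pass to the limit, whereas the paper directly establishes the uniform quantitative gap $\lambda^{1,p}_{N-1-\delta,N+\delta,D}\geq\lambda^{1,p}_{N-1,N,\pi}+\eta$ over the compact range $D\in[0,\pi-\ve_0]$ for $\delta$ small — an equivalent framing of the same compactness reasoning; and you cite Theorem~\ref{T:spectralgap} as a black box, whereas the paper re-runs the localization step inline, which changes nothing since the 1D model quantity is monotone in $D$ anyway. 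One small imprecision in your closing remark: with only the non-strict inequality $\lambda^{1,p}_{N-1,N,D}\geq\lambda^{1,p}_{N-1,N,\pi}$, the limiting inequality would give equality of eigenvalues but would \emph{not} force $D_\infty=\pi$ (that inference is exactly what the equality characterization provides); without the equality case one would simply have no contradiction, rather than a consistent $D_\infty=\pi$.
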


\begin{proof}
As above, without loss of generality we can assume $\mm(X)=1$.
Assume by contradiction that there exists $\ve_0>0$ such that for every $\delta>0$ we can find an  essentially non-branching metric measure space $(X,\sfd,\mm)$ satisfying $\CD^{*}(N-1-\delta,N+\delta)$, with $\mm(X)=1$, such that   ${\diam}(X)\leq \pi -\ve_0$ but  $\lambda^{1,p}_{(X,\sfd,\mm)}\leq \lambda^{1,p}_{N-1,N,\pi} + \delta$.
\\The very definition of  $\lambda^{1,p}_{(X,\sfd,\mm)}$ implies that there exists a function $f \in \Lip(X)$,  with $\int_X f |f|^{p-2} \mm=0$ and $\int_X |f|^p \, \mm(dx)=1$, such that  
\begin{equation}\label{eq:rigcontr}
\int_X |\nabla f|^p(x) \, \mm(dx) \leq \lambda^{1,p}_{(X,\sfd,\mm)}+ \delta \leq \lambda^{1,p}_{N-1,N,\pi} + 2\delta.
\end{equation}
On the other hand, Theorem \ref{thm:La1pComp} ensures that there exists $\eta>0$ such that 
$$ \lambda^{1,p}_{N-1,N,D}\geq  \lambda^{1,p}_{N-1,N,\pi} + 2\eta,  \quad \forall D \in [0,\pi-\ve_0]. $$
Moreover, the continuity of $K,N,D\mapsto \lambda^{1,p}_{K,N,D}$  guarantees that, for every $D_0\in (0,1)$ there exists $\delta_0=\delta_0(N,D_0)$ such that 
 $$\lambda^{1,p}_{N-1-\delta, N+\delta, D} \geq \lambda^{1,p}_{N-1, N, D}-\eta  \quad \forall \delta\in [0,\delta_0], \; \forall D \in [D_0, 2\pi]. $$
Since clearly by definition we have that  $\lambda^{1,p}_{K,N,D} \geq  \lambda^{1,p}_{0,N,D}$ for every $K>0, N\geq1, p \in (1,\infty)$, Theorem \ref{thm:La1pComp} gives  that
 $$\lim_{D\downarrow 0} \lambda^{1,p}_{N-1-\delta,N+\delta,D}  \geq  \lim_{D\downarrow 0} \lambda^{1,p}_{0,N+\delta,D} = +\infty$$ 
 uniformly for $\delta \in [0, \delta_0(N)]$. The combination of the last two estimates  yields
\begin{equation}
\lambda^{1,p}_{N-1-\delta, N+\delta, D} \geq  \lambda^{1,p}_{N-1,N,\pi} + \eta \quad \forall D \in [0,\pi-\ve_0], \; \forall \delta \in [0, \delta_0(N)].
\end{equation}  
By repeating the proof of Theorem \ref{T:spectralgap}, and observing that by construction it holds $\diam(\dom(g(q,\cdot)) \leq \pi-\ve_0$,  we then obtain
\begin{align*}
\int_{X}  |\nabla f|^{p}(x)\, \mm(dx)&~ = \int_{Q} \left( \int_{X} |\nabla f|^{p}(x) \,(g(q,\cdot))\, \sharp \left(h_{q}(t) \, \mathcal{L}^{1}\right)(dx) \right)\, \qq(dq) \crcr
&~ \geq   \int_{Q} \left( \int_{\dom(g(q,\cdot))} |f'_{q}(t)|^{p} \,h_{q}(t) \, \mathcal{L}^{1}(dt) \right)\, \qq(dq) \crcr
&~ \geq \int_{Q}   \lambda^{1,p}_{N-1-\delta,N+\delta,\diam(\dom(g(q,\cdot))}  \left( \int_{\dom(g(q,\cdot))}   |f_{q}(t)|^p \,h_{q}(t) \, \mathcal{L}^{1}(dt) \right)\, \qq(dq) \crcr
&~ \geq   (\lambda^{1,p}_{N-1,N,\pi} + \eta)     \int_{Q}    \left( \int_{\dom(g(q,\cdot))}   |f_{q}(t)|^p \,h_{q}(t) \, \mathcal{L}^{1}(dt) \right)\, \qq(dq) \crcr
&~ =  \lambda^{1,p}_{N-1,N,\pi} + \eta.
\end{align*}
Contradicting  \eqref{eq:rigcontr},   once chosen $\delta<\eta/2$.
\end{proof}

\begin{corollary}[Almost equality in the $p$-spectral gap implies mGH-closeness to a spherical suspension]\label{cor:pSpecDiam}
Let $N\geq 2$, and $p \in (1,\infty)$ be fixed. Then for every $\ve>0$ there exists $\delta=\delta(\ve, N,p)>0$ such that the following holds. 

Let $(X,\sfd,\mm)$ be   an $\RCD^* (N-1-\delta,N+\delta)$ space. If  
$$\lambda^{1,p}_{(X,\sfd,\mm)}\leq \lambda^{1,p}_{N-1,N,\pi} + \delta,$$
then there exists an $\RCD^*(N-2,N-1)$ space $(Y, \sfd_Y, \mm_Y)$  such that 
$$\sfd_{mGH}\left( (X,\sfd,\mm),  [0,\pi] \times_{\sin}^{N-1}  Y \right) \leq \ve.  $$
\end{corollary}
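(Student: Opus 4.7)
The plan is a standard compactness/contradiction argument that combines Theorem \ref{T:almostrigid} (almost-maximal diameter), the compactness/stability for $\RCD^{*}$ spaces recorded in Theorem \ref{thm:CompRCD}, and Ketterer's Maximal Diameter Theorem \cite{Ket} for $\RCD^{*}(N-1,N)$-spaces.

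Suppose the corollary fails. Then there exist $\ve_{0}>0$, a sequence $\delta_{n}\downarrow 0$, and a sequence of $\RCD^{*}(N-1-\delta_{n},N+\delta_{n})$ spaces $(X_{n},\sfd_{n},\mm_{n})$ with $\mm_{n}(X_{n})=1$ such that
$$
\lambda^{1,p}_{(X_{n},\sfd_{n},\mm_{n})}\leq \lambda^{1,p}_{N-1,N,\pi}+\delta_{n},
$$
but
$$
\sfd_{mGH}\bigl((X_{n},\sfd_{n},\mm_{n}),\,[0,\pi]\times_{\sin}^{N-1} Y\bigr)>\ve_{0}
$$
for every $\RCD^{*}(N-2,N-1)$ space $(Y,\sfd_{Y},\mm_{Y})$. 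The generalized Bonnet--Myers bound implicit in $\CD^{*}(N-1-\delta_{n},N+\delta_{n})$ gives a uniform upper bound $\diam(X_{n})\leq \pi\sqrt{(N+\delta_{n}-1)/(N-1-\delta_{n})}$, which tends to $\pi$. On the other hand, since $\RCD^{*}$ spaces are essentially non-branching by \cite{RS2014}, Theorem \ref{T:almostrigid} applies to each $X_{n}$: for every $\ve>0$ and $n$ large enough we get $\diam(X_{n})\geq \pi-\ve$. Hence $\diam(X_{n})\to \pi$.

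By Theorem \ref{thm:CompRCD}, up to a subsequence $(X_{n},\sfd_{n},\mm_{n})$ mGH-converges to a limit space $(X_{\infty},\sfd_{\infty},\mm_{\infty})$ which is $\RCD^{*}(N-1,N)$. Since the diameter is continuous under mGH convergence (for compact spaces with full support, which is the setting of Theorem \ref{thm:CompRCD}), we deduce $\diam(X_{\infty})=\pi$. Ketterer's Maximal Diameter Theorem \cite{Ket} then guarantees the existence of an $\RCD^{*}(N-2,N-1)$ space $(Y,\sfd_{Y},\mm_{Y})$ (the assumption $N\geq 2$ being used here so that $N-2\geq 0$) such that $(X_{\infty},\sfd_{\infty},\mm_{\infty})$ is isomorphic to the spherical suspension $[0,\pi]\times_{\sin}^{N-1} Y$.

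But then for all sufficiently large $n$ we have $\sfd_{mGH}(X_{n},[0,\pi]\times_{\sin}^{N-1} Y)<\ve_{0}$, contradicting the choice of the sequence. The main technical point I expect is to justify that Theorem \ref{T:almostrigid}, whose $\delta$-parameter a priori depends on the curvature/dimension pair $(N-1,N)$, can be applied uniformly along the sequence $(N-1-\delta_{n},N+\delta_{n})$; this is handled by the continuity of $K,N,D\mapsto \lambda^{1,p}_{K,N,D}$ established in Theorem \ref{thm:La1pComp}(2), which was also the source of the uniform $\delta_{0}(N)$ appearing in the proof of Theorem \ref{T:almostrigid}. All other ingredients are quoted off-the-shelf.
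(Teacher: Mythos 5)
Your proposal is correct and follows essentially the same compactness/contradiction route as the paper's own proof: extract a sequence $\delta_n\downarrow 0$ with the stated properties, apply Theorem~\ref{T:almostrigid} to force $\diam(X_n)\to\pi$, use Theorem~\ref{thm:CompRCD} to pass to a mGH-limit $\RCD^*(N-1,N)$ space of diameter $\pi$, and invoke Ketterer's Maximal Diameter Theorem to contradict the choice of $\ve_0$. The side remark you add about the uniformity of $\delta$ in Theorem~\ref{T:almostrigid} is correct but already built into that theorem's statement (its $\delta$ depends only on $\ve,N,p$), so no extra argument is needed there.
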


\begin{proof}
Fix $N\in [2, \infty), p \in (1,\infty) $ and assume by contradiction there exist $\ve_0>0$ and  a sequence $(X_j, \sfd_j, \mm_j)$ of $\RCD^*(N-1-\frac{1}{j}, N+\frac{1}{j})$ spaces such that  $\lambda^{1,p}_{(X,\sfd,\mm)}\leq \lambda^{1,p}_{N-1,N,\pi} + \frac{1}{j},$  but
\begin{equation}\label{eq:contrj}
\sfd_{mGH}(X_j, [0,\pi] \times_{\sin}^{N-1} Y) \geq \ve_0 \quad \text{for every $j\in \N$}
\end{equation}
and every  $\RCD^*(N-2,N-1)$ space $(Y, \sfd_Y, \mm_Y)$ with $\mm_Y(Y)=1$.   Observe that Theorem  \ref{T:almostrigid} yields 
\begin{equation}\label{eq:diam}
\diam((X_j, \sfd_j))\to \pi.
\end{equation}
 By the compactness/stability property of $\RCD^*(K,N)$ spaces recalled in Theorem \ref{thm:CompRCD}  we get  that, up to subsequences, the spaces $X_j$ mGH-converge to a limit $\RCD^*(N-1,N)$ space $(X_\infty, \sfd_\infty, \mm_\infty)$.   Since the diameter is continuous under mGH convergence of uniformly bounded spaces,    \eqref{eq:diam} implies  that $\diam((X_\infty, \sfd_\infty))=\pi$.  But then by the Maximal Diameter Theorem \cite{Ket} we get that $(X_\infty, \sfd_\infty, \mm_\infty)$ is isomorphic to a spherical suspension  $[0,\pi] \times_{\sin}^{N-1} Y$ for some  $\RCD^*(N-2,N-1)$ space $(Y, \sfd_Y, \mm_Y)$ with $\mm_Y(Y)=1$.  Clearly this contradicts \eqref{eq:contrj} and the thesis follows.
\end{proof}

\begin{corollary}[$p$-Obata Theorem]\label{cor:pObata}
Let $(X,\sfd,\mm)$ be   an $\RCD^* (N-1,N)$ space for some $N \geq 2,$ and let $1<p<\infty$. If  
$$\lambda^{1,p}_{(X,\sfd,\mm)}= \lambda^{1,p}_{N-1,N,\pi} \quad (=\lambda^{1,p}(S^N), \text{ if }N\in \N),$$
then $(X,\sfd,\mm)$ is a spherical suspension, i.e. there exists an $\RCD^*(N-2,N-1)$ space $(Y, \sfd_Y, \mm_Y)$  such that $(X,\sfd,\mm)$ is isomorphic to  $[0,\pi] \times_{\sin}^{N-1}  Y$.
\end{corollary}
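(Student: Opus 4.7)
My plan is to deduce this rigidity statement from the almost rigidity result of Theorem \ref{T:almostrigid} combined with the Maximal Diameter Theorem of Ketterer \cite{Ket}. The strategy is simply to first show that equality in the $p$-spectral gap forces the space to have maximal diameter $\pi$, and then invoke the known rigidity of $\RCD^*(N-1,N)$ spaces with maximal diameter. A second, equally natural route is through Corollary \ref{cor:pSpecDiam} applied to the constant sequence $X_j\equiv X$.

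Concretely, I would first note that since $(X,\sfd,\mm)$ is $\RCD^*(N-1,N)$, it is essentially non-branching and, a fortiori, satisfies $\CD^*(N-1-\delta,N+\delta)$ for every $\delta \geq 0$. Fix any $\varepsilon>0$ and let $\delta=\delta(\varepsilon,N,p)>0$ be the constant produced by Theorem \ref{T:almostrigid}. Exact equality $\lambda^{1,p}_{(X,\sfd,\mm)}=\lambda^{1,p}_{N-1,N,\pi}$ trivially satisfies the hypothesis $\lambda^{1,p}_{(X,\sfd,\mm)}\leq \lambda^{1,p}_{N-1,N,\pi}+\delta$, so Theorem \ref{T:almostrigid} gives $\diam(X)\geq \pi-\varepsilon$. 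Letting $\varepsilon\downarrow 0$ yields $\diam(X)\geq \pi$; combined with the Bonnet--Myers type upper bound $\diam(X)\leq \pi$ available for any $\RCD^*(N-1,N)$ (equivalently $\CD^*(N-1,N)$) space, we conclude $\diam(X)=\pi$.

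At this point the proof is completed by Ketterer's Maximal Diameter Theorem \cite{Ket}: an $\RCD^*(N-1,N)$ space attaining the maximal diameter $\pi$ is isomorphic to a spherical suspension $[0,\pi]\times_{\sin}^{N-1} Y$ for some $\RCD^*(N-2,N-1)$ space $(Y,\sfd_Y,\mm_Y)$. There is no genuine obstacle in the present step, since the analytic content has been packaged already: the sharp one-dimensional localization of Theorem \ref{T:localize} together with the continuity of $K,N,D\mapsto \lambda^{1,p}_{K,N,D}$ (Theorem \ref{thm:La1pComp}) drives Theorem \ref{T:almostrigid}, while the geometric rigidity is supplied by Ketterer's theorem; the $p$-Obata statement then follows as their clean limiting case.
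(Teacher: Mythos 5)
Your proposal is correct and follows exactly the paper's own argument: apply Theorem \ref{T:almostrigid} for every $\ve>0$ (exact equality trivially satisfies the almost-equality hypothesis) to conclude $\diam(X)=\pi$, then invoke Ketterer's Maximal Diameter Theorem. The paper's proof is just a terser version of the same two steps.
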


\begin{proof}
Theorem   \ref{T:almostrigid} implies that $\diam((X, \sfd))=\pi$ and the thesis then follows by the  Maximal Diameter Theorem \cite{Ket}.
\end{proof}

\begin{remark}
The Obata's Theorem for $p=2$ in $\RCD^*(N-1,N)$ spaces has been recently obtained by Ketterer  \cite{Ket} by different methods (see also \cite{JZ1}); the approach proposed here has the double advantage of length and of being valid for every $p\in (1,\infty)$.
\end{remark}

\section{The case $p=1$ and the Cheeger constant}\label{S:p=1}
It is well known (see for  instance \cite{Honda, WWZ}) that  an alternative way of 
defining  $\lambda^{1,p}_{(X,\sfd,\mm)}$ which extends also to $p=1$ is the following. 
For every $p\in [1, \infty)$ and every $f \in L^p(X)$ let 
$$
c_p(f):=\inf_{c \in \R} \left( \int_X |f-c|^p \, \mm \right)^{1/p}.
$$
For every $p \in (1,\infty)$ it holds that \cite[Corollary 2.11]{Honda}
$$
\lambda^{1,p}_{(X,\sfd,\mm)}=\inf \left\{ \int_X |\nabla f|^p \, \mm\; \text{:} \; f \in \Lip\cap L^p(X), \, c_p(f)=\|f\|_{L^p}=1  \right\} .
$$
It is then natural to set
\begin{equation}\label{eq:defla11}
\lambda^{1,1}_{(X,\sfd,\mm)}=\inf \left\{ \int_X |\nabla f| \, \mm\; \text{:} \; f \in \Lip\cap L^1(X), \, c_1(f)=\|f\|_{L^1}=1  \right\}. 
\end{equation}
Assuming that $\mm(X)=1$, recall that a number $M_f \in \R$ is a median for $f$  if and only if
$$
\mm(\{f\geq M_f \}) \geq \frac{1}{2} \qquad \text{and} \qquad  \mm(\{f\leq M_f \}) \geq \frac{1}{2}.
$$
It is not difficult to check that (see for instance \cite[Section VI]{Chav}) for every  $f \in L^1(X)$ there exists a median of $f$, and moreover 
$$
\int_X |f-M_f| \, \mm=c_1(f)
$$
holds for every median $M_f$ of $f$. This link between  $c_1(f)$ and $M_f$ is useful to prove the equivalence between 
the Cheeger constant and $\lambda^{1,1}_{(X,\sfd,\mm)}$. Recall that the Cheeger constant $h_{(X,\sfd,\mm)}$ is defined by
$$
h_{(X,\sfd,\mm)}:=\inf\left\{\frac{\mm^+(E)}{\mm(E)} \; : \; E\subset X \text{ is Borel  and } \mm(E)\in (0, 1/2]   \right\},
$$
where 
$$
\mm^+(E):= \liminf_{\ve \downarrow 0} \frac{\mm(E^\ve)- \mm(E)}{\ve}
$$
is the (outer) Minkowski content. 
As usual  $E^{\ve}:=\{x \in X \,:\, \exists y \in  E \, \text{ such that } \, \sfd(x,y)< \ve \}$ is the $\ve$-neighborhood of $E$ with respect to the metric $\sfd$.
The next result, due to Maz'ya \cite{Maz} and Federer-Fleming \cite{FeFl} (see also \cite{Bob} for a careful derivation, \cite[Lemma 2.2]{Mil09} 
and \cite[Proposition 2.13]{Honda} for the present formulation), rewrites Cheeger's
isoperimetric inequality in functional form.

\begin{proposition}
Assume that $(X,\sfd,\mm)$ is a m.m.s with $\mm(\{x\})=0$ for every $x \in X$, i.e. $\mm$ is atomless. Then 
$$h_{(X,\sfd,\mm)}=\lambda^{1,1}_{(X,\sfd,\mm)}.$$
\end{proposition}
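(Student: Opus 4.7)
The plan is to prove the double inequality $h_{(X,\sfd,\mm)} \leq \lambda^{1,1}_{(X,\sfd,\mm)}$ and $\lambda^{1,1}_{(X,\sfd,\mm)} \leq h_{(X,\sfd,\mm)}$ separately, following the classical Maz'ya--Federer--Fleming strategy adapted to the metric measure setting.

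For the lower bound $\lambda^{1,1}_{(X,\sfd,\mm)} \geq h_{(X,\sfd,\mm)}$, I would take any admissible $f \in \Lip(X) \cap L^1(X,\mm)$ with $c_1(f) = \|f\|_{L^1} = 1$. The condition $c_1(f) = \|f\|_{L^1}$ forces $0$ to be a median of $f$ (since $0$ is then a minimizer of $c \mapsto \int |f - c|\,\mm$), hence $\mm(\{f > 0\}) \leq 1/2$ and $\mm(\{f < 0\}) \leq 1/2$. Splitting $f = f^+ - f^-$, I would apply the co-area inequality for the non-negative Lipschitz functions $f^\pm$,
\begin{equation*}
\int_X |\nabla f^\pm|\, \mm \;\geq\; \int_0^\infty \mm^+(\{f^\pm > t\})\, dt.
\end{equation*}
For every $t > 0$ the superlevel set $\{f^+ > t\} \subset \{f > 0\}$ has $\mm$-measure $\leq 1/2$, so by definition of the Cheeger constant $\mm^+(\{f^+ > t\}) \geq h_{(X,\sfd,\mm)}\, \mm(\{f^+ > t\})$; a layer-cake integration then gives $\int |\nabla f^+|\,\mm \geq h_{(X,\sfd,\mm)} \int f^+\,\mm$, and likewise for $f^-$. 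Finally, using that $|\nabla f^+| + |\nabla f^-| = |\nabla f|$ on $\{f \neq 0\}$ and that $|\nabla f| = 0$ almost everywhere on the level set $\{f = 0\}$ (a Lipschitz/Stepanoff-type fact), one obtains $\int_X |\nabla f|\,\mm \geq h_{(X,\sfd,\mm)} \|f\|_{L^1} = h_{(X,\sfd,\mm)}$.

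For the opposite bound $\lambda^{1,1}_{(X,\sfd,\mm)} \leq h_{(X,\sfd,\mm)}$, fix $\ve > 0$ and pick a Borel set $E \subset X$ with $\mm(E) \in (0, 1/2]$ and $\mm^+(E)/\mm(E) \leq h_{(X,\sfd,\mm)} + \ve$. Using that $\mm$ is atomless, one may reduce to $\mm(E) < 1/2$ (by removing, if necessary, a small subset of $E$ of controlled measure). Consider the Lipschitz approximations $f_\delta(x) := (1 - \sfd(x,E)/\delta)_+$: they equal $1$ on $E$, are supported in $E^\delta$, and satisfy $|\nabla f_\delta| \leq \delta^{-1} \chi_{E^\delta \setminus E}$. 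For $\delta$ sufficiently small, $\mm(E^\delta) < 1/2$, so $0$ is a median of $f_\delta$ and $c_1(f_\delta) = \|f_\delta\|_{L^1} = \int f_\delta\,\mm \geq \mm(E)$. The rescaled test function $g_\delta := f_\delta/\|f_\delta\|_{L^1}$ is admissible for $\lambda^{1,1}$ and
\begin{equation*}
\int_X |\nabla g_\delta|\,\mm \;\leq\; \frac{\mm(E^\delta) - \mm(E)}{\delta\, \mm(E)}.
\end{equation*}
Passing to the $\liminf$ as $\delta \to 0$ yields $\lambda^{1,1}_{(X,\sfd,\mm)} \leq \mm^+(E)/\mm(E) \leq h_{(X,\sfd,\mm)} + \ve$, and sending $\ve \to 0$ concludes the proof.

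The main obstacle I expect is the careful justification of the co-area inequality for Lipschitz functions against the \emph{outer Minkowski content} (rather than the BV perimeter, which is merely dominated by it), together with the decomposition identity $|\nabla f^+| + |\nabla f^-| = |\nabla f|$ almost everywhere; both rely on general properties of Lipschitz functions in metric measure spaces treated in \cite{Bob, Maz, FeFl, Mil09, Honda}. A secondary but essential point is the reduction from $\mm(E) \leq 1/2$ to $\mm(E) < 1/2$, for which the atomlessness hypothesis is crucial in ensuring $\mm(E^\delta) < 1/2$ for small enough $\delta$.
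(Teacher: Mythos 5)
The paper does not actually prove this proposition: it is imported from the literature (Maz'ya, Federer--Fleming, and for the present metric formulation \cite{Bob}, \cite[Lemma 2.2]{Mil09}, \cite[Proposition 2.13]{Honda}), and your two-inequality scheme --- generalized co-area plus layer-cake for $\lambda^{1,1}\geq h$, Lipschitz cut-offs of near-optimal sets for $\lambda^{1,1}\leq h$ --- is indeed the classical one. The problem is that the two ``technical points'' you flag at the end are not merely delicate: as stated, they are false at the generality of the proposition (an arbitrary atomless Borel probability measure on a proper geodesic space). The Stepanoff-type claim that $|\nabla f|=0$ $\mm$-a.e.\ on $\{f=0\}$ fails: take $X=\R^2$ Euclidean and $\mm$ the normalized length measure on the segment $[0,1]\times\{0\}$, with $f(x,y)=y$ (add $(x-\tfrac12)^+$ if you want $f\not\equiv 0$ in $L^1(\mm)$); then $\{f=0\}$ carries positive $\mm$-measure while $|\nabla f|\equiv 1$ and $|\nabla f^{+}|=|\nabla f^{-}|=1$ there, so $\int(|\nabla f^{+}|+|\nabla f^{-}|)\,\mm>\int|\nabla f|\,\mm$ and your chain of inequalities breaks. (The claim is true in PI-doubling spaces, hence in the $\CD_{loc}(K,N)$ spaces to which the paper applies the proposition, via locality of minimal upper gradients; but the proposition is stated for general atomless m.m.s.) The standard repair is to run the co-area/layer-cake argument on the truncations $(f-\tau)^{+}$ and $(f+\tau)^{-}$, whose slopes are supported in the disjoint sets $\{f\geq\tau\}$ and $\{f\leq-\tau\}$ and therefore sum to at most $|\nabla f|$ pointwise; one then lets $\tau\downarrow 0$ by monotone convergence.

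The second direction has the same disease in another guise. For $x\in E$ one computes $|\nabla f_\delta|(x)=\delta^{-1}\limsup_{y\to x}\dist(y,E)/\sfd(y,x)$, which need not vanish on $E$: in the example above with $E=[0,\tfrac12]\times\{0\}$ one gets $|\nabla f_\delta|\equiv\delta^{-1}$ on all of $E$, so $\int|\nabla f_\delta|\,\mm\to\infty$ even though $\mm^{+}(E)<\infty$, and the claimed bound $\int|\nabla g_\delta|\,\mm\leq(\mm(E^\delta)-\mm(E))/(\delta\,\mm(E))$ is simply wrong. This is also exactly where an atom would destroy the proposition (for $E=\{x_0\}$ an atom one has $\mm^{+}(E)=0$ yet $\int|\nabla f_\delta|\,\mm=\mm(\{x_0\})/\delta$), so the estimate cannot be purely formal and the atomless hypothesis must enter here. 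Separately, ``removing a small subset of $E$'' does not yield a set of comparable boundary measure: $\mm^{+}(E\setminus F)$ can be $+\infty$ however small $\mm(F)$ is, so the reduction to $\mm(E)<1/2$ needs a different route (e.g.\ assume $\mm^{+}(E)<\infty$, which forces $\mm(E^\delta)\downarrow\mm(\overline{E})=\mm(E)$, and then use that the median of $f_\delta$ tends to $0$ rather than insisting that $0$ be a median). The reduction of the infimum defining $h_{(X,\sfd,\mm)}$ to sets for which the cut-off estimate is legitimate is precisely the ``careful derivation'' carried out in \cite{Bob} and \cite{Mil09}, and it is the part your proposal is missing.
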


It is then clear that the comparison and almost rigidity theorems for $\lambda^{1,1}$ will be based on the corresponding isoperimetric ones obtained by the authors in \cite{CM1}. To this aim in the next subsection we briefly recall  the model Cheeger constant  for the comparison.

\subsection{The model Cheeger constant $h_{K,N,D}$}\label{SS:hKND}
If $K>0$ and $N\in \N$, by the L\'evy-Gromov isoperimetric inequality we know that, for $N$-dimensional smooth manifolds having Ricci curvature bounded below by $K$, the Cheeger constant i is bounded below by the one of the $N$-dimensional round sphere of the suitable radius. In other words  the \emph{model} Cheeger constant is the one of ${\mathbb S}^N$. For $N\geq 1, K\in \R$ arbitrary real numbers the situation is  more complicated, and just recently E. Milman \cite{Mil} discovered what is the model Cheeger constant (more precisely he discovered the model isoperimetric profile, which in turn implies the  model Cheeger constant). In this short section we recall its definition.
\\

Given $\delta>0$, set 
\[
\begin{array}{ccc}
 s_\delta(t) := \begin{cases}
\sin(\sqrt{\delta} t)/\sqrt{\delta} & \delta > 0 \\
t & \delta = 0 \\
\sinh(\sqrt{-\delta} t)/\sqrt{-\delta} & \delta < 0
\end{cases}

& , &

 c_\delta(t) := \begin{cases}
\cos(\sqrt{\delta} t) & \delta > 0 \\
1 & \delta = 0 \\
\cosh(\sqrt{-\delta} t) & \delta < 0
\end{cases}
\end{array} ~.
\]
Given a continuous function $f :\R \to  \R$ with $f(0) \geq 0$, we denote by $f_+ : \R \to \R^+ $ the function coinciding with $f$ between its first non-positive and first positive roots, and vanishing everywhere else, i.e. $f_+ := f \chi_{[\xi_{-},\xi_{+}]}$ with $\xi_{-} = \sup\{\xi \leq 0; f(\xi) = 0\}$ and $\xi_{+} = \inf\{\xi > 0; f(\xi) = 0\}$.

Given $H,K \in \R$ and $N \in [1,\infty)$, set $\delta := K / (N-1)$  and define the following (Jacobian) function of $t \in \R$:
\[
J_{H,K,N}(t) :=
\begin{cases}
\chi_{\{t=0\}}  & N = 1 , K > 0 \\
\chi_{\{H t \geq 0}\} & N = 1 , K \leq 0 \\
\left(c_\delta(t) + \frac{H}{N-1} s_\delta(t)\right)_+^{N-1} & N \in (1,\infty) \\
\end{cases} ~.
\]
As last piece of notation, given a non-negative integrable function $f$ on a closed interval $L \subset \R$, we denote with $\mu_{f,L}$  
the probability measure supported in $L$ with density (with respect to the Lebesgue measure) proportional to $f$ there. In order to simplify a bit the notation we will write
$h_{(L,f)}$ in place of $h_{(L,\, |\cdot|, \mu_{f,L})}$.
\\The model Cheeger constant for spaces having Ricci curvature bounded below by $K\in \R$, dimension bounded above by $N\geq 1$ and diameter at most $D\in (0,\infty]$ is then defined by
\begin{equation}\label{eq:defIKND}
h_{K,N,D}:=\inf_{H\in \R,a\in [0,D]} h_{\left([-a,D-a], J_{H,K,N}\right)}. 
\end{equation}
The formula above has the advantage of considering all the possible cases in just one equation, 
but  probably it is  also instructive to  isolate the different cases in a more explicit way. Indeed one can check \cite[Section 4] {Mil} that:
\begin{itemize}

\item \textbf{Case 1}: $K>0$ and $D<\sqrt{\frac{N-1}{K}} \pi$,
$$
h_{K,N,D}=   \inf_{\xi \in \big[0, \sqrt{\frac{N-1}{K}} \pi -D\big]} h_{\big( [\xi,\xi +D],  \sin( \sqrt{\frac{K}{N-1}} t)^{N-1} \big)}. 
$$

\item \textbf{Case 2}:   $K > 0$ and $D \geq \sqrt{\frac{N-1}{K}} \pi$, 
$$
h_{K,N,D} = h_{ \big( [0, \sqrt{\frac{N-1}{K}} \pi],  \sin( \sqrt{\frac{K}{N-1}} t)^{N-1}   \big)}. 
$$

\item  \textbf{Case 3}: $K=0$ and $D<\infty$,
\begin{eqnarray*}
h_{K,N,D}&=&  \min \left \{ \begin{array}{l}  \inf_{\xi \geq 0} h_{([\xi,\xi+D],  t^{N-1})}~,\\
\phantom{\inf_{\xi \in \R}} h_{([0,D],1)}
\end{array}
\right \} \\
&=& \frac{N}{D} \inf_{\xi \geq 0, v\in (0, 1/2)}  \frac{\left(\min(v,1-v) (\xi+1)^{N} + \max(v,1-v) \xi^{N}\right)^{\frac{N-1}{N}}}{ v \left[ (\xi+1)^{N} - \xi^{N} \right]}  . 
\end{eqnarray*}

\item \textbf{Case 4}:  $K < 0$, $D<\infty$:
$$
h_{K,N,D}=  \min \left \{ \begin{array}{l}
\inf_{\xi \geq 0} h_{\big([\xi,\xi+D], \; \sinh(\sqrt{\frac{-K}{N-1}} t)^{N-1}\big)} ~ , \\
\phantom{\inf_{\xi \in \R}} h_{\big([0,D],   \exp(\sqrt{-K (N-1)} t) \big)}  ~, \\
\inf_{\xi \in \R} h_{\big([\xi,\xi+D], \; \cosh(\sqrt{\frac{-K}{N-1}} t)^{N-1}\big)}
\end{array}
\right \} .
$$
\item In all the remaining cases,  the model Cheeger constant trivializes: $h_{K,N.D}=0$.  
\end{itemize}
\medskip

\subsection{Sharp comparison and almost rigidity for $\lambda^{1,1}=h$}
\begin{theorem}
Let $(X,\sfd,\mm)$ be an  essentially non-branching $\CD_{loc}(K,N)$-space  for some $K\in \R, N \in [1,\infty)$,  with  $\mm(X)=1$ and  having diameter  $D\in (0,+\infty]$. Then
\begin{equation}\label{eq:hcomp}
h_{(X,\sfd,\mm)}\geq h_{K,N,D}.
\end{equation}  
Moreover, for $K>0$ the following holds: for every $N>1$ and $\ve>0$ there exists $\bar{\delta}=\bar{\delta}(K,N,\ve)$ such that,  for every $\delta\in [0,\bar{\delta}]$, if $(X,\sfd,\mm)$ is an essentially non-branching $\CD^{*}(K-\delta,N+\delta)$-space such that 
\begin{equation}\label{eq:hdelta}
h_{(X,\sfd,\mm)}\leq h_{K,N,\pi\sqrt{(N-1)/K}}+\delta\quad (=h(S^N(\sqrt{(N-1)/K)})+\delta, \text{ if } N \in \N), 
\end{equation}
then  $\diam(X)\geq \pi\sqrt{(N-1)/K}-\ve$.
\end{theorem}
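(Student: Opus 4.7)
The first comparison $h_{(X,\sfd,\mm)}\geq h_{K,N,D}$ is an immediate consequence of the sharp isoperimetric inequality established by the authors in \cite{CM1} for essentially non-branching $\CD_{loc}(K,N)$-spaces, combined with the very definition \eqref{eq:defIKND} of the model Cheeger constant. Indeed, \cite{CM1} yields the sharp bound $\mm^{+}(E) \geq I_{K,N,D}(\mm(E))$ for every Borel $E\subset X$, where $I_{K,N,D}(v):=\inf_{H\in \R,\,a\in [0,D]} I_{([-a,D-a], J_{H,K,N})}(v)$ is the model isoperimetric profile. Exchanging the order of the two infima in \eqref{eq:defIKND} gives the identity $h_{K,N,D}=\inf_{v\in (0,1/2]} I_{K,N,D}(v)/v$, so for every Borel $E\subset X$ with $\mm(E)\in (0,1/2]$ one has $\mm^{+}(E)/\mm(E) \geq I_{K,N,D}(\mm(E))/\mm(E) \geq h_{K,N,D}$. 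Passing to the infimum over such $E$ yields the claim.

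For the almost rigidity I would argue by contradiction, following the scheme of Theorem \ref{T:almostrigid}. Suppose, for some fixed $K>0,\,N>1,\,\ve_0>0$, that there is a sequence $(X_j,\sfd_j,\mm_j)$ of essentially non-branching $\CD^{*}(K-1/j,N+1/j)$-spaces such that $h_{(X_j,\sfd_j,\mm_j)}\leq h_{K,N,\pi\sqrt{(N-1)/K}}+1/j$ and $\diam(X_j)\leq \pi\sqrt{(N-1)/K}-\ve_0$. Applying the first part of the theorem to $X_j$ with parameters $(K-1/j,\,N+1/j,\,\diam(X_j))$ gives $h_{(X_j,\sfd_j,\mm_j)} \geq h_{K-1/j,\,N+1/j,\,\diam(X_j)} \geq h_{K-1/j,\,N+1/j,\,\pi\sqrt{(N-1)/K}-\ve_0}$, where the second inequality uses the monotonicity $D\mapsto h_{K,N,D}$, which is non-increasing because enlarging $D$ enlarges the class of one-dimensional model spaces appearing in \eqref{eq:defIKND}.

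The argument is closed by invoking the one-dimensional lemma that $(K,N,D)\mapsto h_{K,N,D}$ is continuous and that, for $K>0$ and $N>1$ fixed, the map $D\mapsto h_{K,N,D}$ is \emph{strictly} decreasing on $(0,\pi\sqrt{(N-1)/K}]$; these properties can be checked directly from the explicit description of $h_{K,N,D}$ recalled in Cases 1--2 of Section \ref{SS:hKND}, and in particular the strict inequality at $D=\pi\sqrt{(N-1)/K}$ reflects the fact that the isoperimetric optimizer on the full model sphere genuinely uses the whole interval. Granting this, as $j\to\infty$ the lower bound $h_{K-1/j,\,N+1/j,\,\pi\sqrt{(N-1)/K}-\ve_0}$ converges to $h_{K,N,\pi\sqrt{(N-1)/K}-\ve_0}$, which exceeds $h_{K,N,\pi\sqrt{(N-1)/K}}$ by a definite amount $\eta>0$; picking $j$ with $1/j<\eta/2$ contradicts the assumed upper bound on $h_{(X_j,\sfd_j,\mm_j)}$.

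The main obstacle is precisely this one-dimensional strict monotonicity/continuity statement for $h_{K,N,D}$ at $D=\pi\sqrt{(N-1)/K}$. Unlike the situation for $\lambda^{1,p}$, the model Cheeger constant is a double infimum --- over the placement of the sub-interval and over the subsets within it --- and the optimal configurations may vary non-smoothly with $D$; however, the needed quantitative strictness is implicitly contained in the almost-rigidity analysis of the full model isoperimetric profile carried out in \cite{CM1}, from which the required statement can be extracted.
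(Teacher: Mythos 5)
Your proof of the comparison $h_{(X,\sfd,\mm)}\geq h_{K,N,D}$ is essentially the same as the paper's: both pass through the identity $h=\inf_{v\in(0,1/2)}\cI(v)/v$ for the Cheeger constant and then apply the sharp isoperimetric comparison $\cI_{(X,\sfd,\mm)}(v)\geq \cI_{K,N,D}(v)$ from \cite{CM1}. This part is fine.

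For the almost-rigidity your approach diverges from the paper, and the divergence leaves a genuine gap that you yourself identify as the ``main obstacle.'' You want to argue by contradiction with a sequence $X_j$ of small diameter and then conclude from the chain
$h_{(X_j)}\geq h_{K-1/j,N+1/j,\diam(X_j)}\geq h_{K-1/j,N+1/j,\pi\sqrt{(N-1)/K}-\ve_0}$
that the right-hand side exceeds $h_{K,N,\pi\sqrt{(N-1)/K}}$ by a fixed $\eta>0$ for large $j$. This requires two unproved one-dimensional facts about the double-infimum quantity $h_{K,N,D}$: (i) joint continuity in $(K,N,D)$ and (ii) strict monotonicity of $D\mapsto h_{K,N,D}$ at the endpoint $D=\pi\sqrt{(N-1)/K}$. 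Neither is immediate from the explicit Cases 1--2 of Section \ref{SS:hKND} because the outer infimum over $(H,a)$ and the inner infimum over subsets need not be attained, and the optimal configuration can jump with $D$; ``implicitly contained in \cite{CM1}'' is not a proof. The paper avoids this issue entirely: rather than proving any regularity of $D\mapsto h_{K,N,D}$, it extracts from the assumed Cheeger bound a single near-optimal volume $\bar v\in(0,1/2)$ with
$\cI_{(X,\sfd,\mm)}(\bar v)\leq \cI_{K,N,\pi\sqrt{(N-1)/K}}(\bar v)+\delta$,
and then invokes the already-established almost rigidity for the isoperimetric profile \cite[Theorem~1.5]{CM1} \emph{at that fixed volume level}. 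This is a direct pointwise reduction that needs no monotonicity or continuity of the model Cheeger constant. So while your route could in principle be completed, it asks for a nontrivial auxiliary lemma that the paper neither proves nor needs; you should either prove that lemma or, better, adopt the paper's reduction to a single volume level and apply the isoperimetric almost rigidity directly.
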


\begin{proof}
Recall that the isoperimetric profile of $(X,\sfd,\mm)$ is the largest function $\cI_{(X,\sfd,\mm)}:[0,1]\to \R^+$ such that for every Borel subset $E\subset X$ it holds $\mm^+(E)\geq \cI_{(X,\sfd,\mm)}(\mm(E))$. As discovered  in \cite{Mil} (see also \cite[Section 2.5]{CM1} for the present notation), for every $K\in \R, N\in [1, \infty)$, $D\in(0,\infty]$ there exists a model isoperimetric profile $\cI_{K,N,D}:[0,1]\to \R^+$;  it is straightforward to check that  
$$
h_{(X,\sfd,\mm)}=\inf_{v \in (0,1/2)} \frac{\cI_{(X,\sfd,\mm)}(v)}{ v} \quad \text{and} \quad h_{K,N,D}=\inf_{v \in (0,1/2)} \frac{\cI_{K,N,D}(v)}{ v}. 
$$
Since in our previous paper \cite[Theorem 1.2]{CM1} we proved that for every $v>0$ it holds 
\begin{equation}\label{eq:compI}
\cI_{(X,\sfd,\mm)}(v) \geq \cI_{K,N,D}(v),
\end{equation}
 the first claim \eqref{eq:hcomp} follows.
\\In order to prove the second part of the theorem, note  \eqref{eq:hdelta} implies that there exists $\bar{v}\in(0,1/2)$ such that
\begin{equation} \nonumber \label{eq:cIXKd}
\frac{\cI_{(X,\sfd,\mm)}(\bar{v}) }{\bar{v}}\leq h_{(X,\sfd,\mm)}+\delta \leq  h_{K,N,\pi\sqrt{(N-1)/K}}+2\delta \leq \frac{\cI_{K,N,\pi\sqrt{(N-1)/K}} (\bar{v})}{\bar{v}}+2 \delta.
\end{equation}
Multiplying by $\bar{v}$, we get 
$$
\cI_{(X,\sfd,\mm)}(\bar{v}) \leq  \cI_{K,N,\pi\sqrt{(N-1)/K}} (\bar{v}) +2 \delta \bar{v} \leq  \cI_{K,N,\pi\sqrt{(N-1)/K}} (\bar{v}) + \delta.
$$ 
The thesis then follows by direct  application of  \cite[Theorem 1.5]{CM1}.

\end{proof}

Before stating the  result let us observe that if $(X,\sfd,\mm)$ is an $\RCD^*(K,N)$ space for some $K>0$ then, 
called $\sfd':=\sqrt{\frac{K}{N-1}} \; \sfd$, we have that $(X,\sfd',\mm)$ is  $\RCD^*(N-1,N)$; 
in other words, if the Ricci lower bound is $K>0$ then up to scaling we can assume it is actually equal to $N-1$.

Arguing as in the proof of Corollaries \ref{cor:pSpecDiam}-\ref{cor:pObata} we get the following result.

\begin{corollary}
For every $N\in [2, \infty) $,  $\ve>0$ there exists $\bar{\delta}=\bar{\delta}(N,\ve)>0$ such that the following hold. For every  $\delta \in [0, \bar{\delta}]$, if  $(X,\sfd,\mm)$ is an $\RCD^*(N-1-\delta,N+\delta)$-space with $\mm(X)=1$, satisfying 
$$h_{(X,\sfd,\mm)}\leq h_{N-1,N,\pi}+\delta\quad (=h(S^N)+\delta,  \text{ if } N \in \N), $$
then  there exists an $\RCD^*(N-2,N-1)$ space $(Y, \sfd_Y, \mm_Y)$ with $\mm_Y(Y)=1$ such that 
$$\sfd_{mGH}(X, [0,\pi] \times_{\sin}^{N-1} Y) \leq \ve. $$
In particular, if $(X,\sfd,\mm)$ is an $\RCD^*(N-1,N)$-space satisfying $h_{(X,\sfd,\mm)}= h_{N-1,N,\pi}=h(S^N)$, then it is isomorphic to a spherical suspension; i.e. there exists an $\RCD^*(N-2,N-1)$ space $(Y, \sfd_Y, \mm_Y)$ with $\mm_Y(Y)=1$ such that $(X,\sfd,\mm)$ is isomorphic to $[0,\pi] \times_{\sin}^{N-1} Y$.
\end{corollary}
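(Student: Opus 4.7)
The plan is to mimic closely the strategy used for the $p$-spectral gap in Corollaries \ref{cor:pSpecDiam} and \ref{cor:pObata}. First I would  argue the almost-rigidity part by contradiction: suppose there exist $\ve_0>0$ and a sequence $(X_j,\sfd_j,\mm_j)$ of $\RCD^*(N-1-1/j,N+1/j)$ spaces with $\mm_j(X_j)=1$, satisfying
$$h_{(X_j,\sfd_j,\mm_j)}\leq h_{N-1,N,\pi}+\frac{1}{j},$$
but such that $\sfd_{mGH}\big(X_j,\,[0,\pi]\times_{\sin}^{N-1}Y\big)\geq \ve_0$ for every $\RCD^*(N-2,N-1)$ space $(Y,\sfd_Y,\mm_Y)$ with $\mm_Y(Y)=1$.

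Next I would invoke the almost-maximal-diameter statement of the previous theorem (the $h$-analogue of Theorem \ref{T:almostrigid}): taking $K_j:=N-1-1/j>0$ for $j$ large, the hypothesis forces $\diam(X_j,\sfd_j)\geq \pi\sqrt{(N+1/j-1)/(N-1-1/j)}-\ve_j$ for a suitable $\ve_j\downarrow 0$; combined with the Bonnet--Myers-type diameter upper bound built into $\RCD^*(N-1-1/j,N+1/j)$, this yields $\diam(X_j,\sfd_j)\to \pi$ as $j\to\infty$. Now by the compactness/stability statement in Theorem \ref{thm:CompRCD}, a subsequence of $(X_j,\sfd_j,\mm_j)$ converges in the mGH sense to a limit space $(X_\infty,\sfd_\infty,\mm_\infty)$ which is $\RCD^*(N-1,N)$; continuity of the diameter under mGH convergence of uniformly bounded spaces gives $\diam(X_\infty,\sfd_\infty)=\pi$.

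At this point the Maximal Diameter Theorem of Ketterer \cite{Ket} applies and identifies $(X_\infty,\sfd_\infty,\mm_\infty)$ with a spherical suspension $[0,\pi]\times_{\sin}^{N-1}Y$ for some $\RCD^*(N-2,N-1)$ space $(Y,\sfd_Y,\mm_Y)$ with $\mm_Y(Y)=1$, contradicting the standing assumption and proving the almost-rigidity statement. The rigidity part is then immediate: if $(X,\sfd,\mm)$ is $\RCD^*(N-1,N)$ and $h_{(X,\sfd,\mm)}=h_{N-1,N,\pi}$, the comparison theorem applied with $\delta=0$ already forces $\diam(X,\sfd)=\pi$, so the Maximal Diameter Theorem directly yields the spherical suspension structure without passing to a limit.

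The only genuinely delicate point I expect is the first reduction, namely deducing $\diam(X_j)\to \pi$ from the $h$-gap inequality for varying curvature-dimension parameters $(N-1-1/j,N+1/j)$: it requires that the diameter lower bound in the previous theorem be applied with $\delta_j:=1/j$ and $\ve_j\to 0$ uniformly, which in turn is a consequence of the  continuity of $(K,N,D)\mapsto h_{K,N,D}$ (in complete analogy with the continuity of $\lambda^{1,p}_{K,N,D}$ used in the proof of Theorem \ref{T:almostrigid}). All remaining steps are soft compactness-stability arguments.
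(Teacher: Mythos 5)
Your proposal is correct and follows essentially the same route as the paper, which itself simply says ``arguing as in the proof of Corollaries \ref{cor:pSpecDiam}--\ref{cor:pObata}'': contradiction sequence, almost-maximal diameter from the preceding $h$-rigidity theorem (applied with $K=N-1$), mGH compactness and stability of $\RCD^*$, continuity of the diameter, and Ketterer's Maximal Diameter Theorem, with the rigidity case handled directly via $\diam(X)=\pi$. The only (harmless) imprecision is the intermediate bound $\diam(X_j)\geq \pi\sqrt{(N+1/j-1)/(N-1-1/j)}-\ve_j$: the preceding theorem, applied with fixed $K=N-1$ and $\delta=1/j$, gives $\diam(X_j)\geq \pi-\ve$ for $j$ large, which together with Bonnet--Myers already yields $\diam(X_j)\to\pi$ without any further continuity of $(K,N,D)\mapsto h_{K,N,D}$.
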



\section{Sharp Log-Sobolev and Talagrand inequalities}

\subsection{Sharp Log-Sobolev in diameter-curvature-dimensional form}\label{SS:Log-Sob}

Recall that a m.m.s. $(X,\sfd,\mm)$ supports the Log-Sobolev inequality with constant $\alpha>0$ 
if for any Lipschitz function $f:X\to [0,\infty)$ with $\int_X f\,  \mm=1$ it holds
\begin{equation}\label{eq:logsobolev}
2\alpha \int_X f \log f \, \mm \leq \int_{\{f>0\}} \frac{|\nabla f|^2}{f} \mm.
\end{equation}
The largest constant $\alpha$, such that \eqref{eq:logsobolev} holds for any  Lipschitz function $f:X\to [0,\infty)$ with $\int_X f\,  \mm=1$, 
will be called Log-Sobolev constant of $(X,\sfd,\mm)$ and denoted with $\alpha^{LS}_{(X,\sfd,\mm)}$.

As before  we will reduce to the one-dimensional case. 
Given $K\in \R$, $N\geq1$, $D\in (0,+\infty]$ we denote with $\alpha^{LS}_{K,N,D}>0$ the maximal constant $\alpha$ such that 
\begin{equation}\label{eq:logsobolevKND}
2\,\alpha \int_\R f \log f \, \mu \leq \int_{\{f>0\}} \frac{|f'|^2}{f} \mu, \quad \forall \mu\in {\mathcal F}^s_{K,N,D},
\end{equation}
for every Lipschitz $f:\R\to [0,\infty)$  with  $\int f \, \mu=1$.  

\begin{remark}
If $K>0$ and $D=\pi \sqrt{\frac{N-1}{K} }$, it is known that the corresponding optimal 
Log-Sobolev constant is $\frac{KN}{N-1}$ (see the discussion below). 
It is an interesting open problem, that we don't address here, 
to give an explicit expression of the quantity  $\alpha^{LS}_{K,N,D}$ for general $K\in \R, N\geq 1$, $D \in (0, \infty)$. 
\end{remark}

\begin{theorem}\label{thm:compLS}
Let $(X,\sfd,\mm)$ be a metric measure space with diameter $D \in (0,\infty)$ and satisfying $\CD_{loc}(K,N)$ for some $K\in \R, N\in [1,\infty)$.  
Assume moreover it is essentially non-branching. 
Then  for any Lipschitz function $f:X\to [0,\infty)$ with $\int_X f\,  \mm=1$ it holds
$$
2\,\alpha^{LS}_{K,N,D} \int_X f \log f \, \mm \leq \int_{\{f>0\}} \frac{|\nabla f|^2}{f} \mm.
$$
In other terms it holds $\alpha^{LS}_{(X,\sfd,\mm)}\geq \alpha^{LS}_{K,N,D}$.
\end{theorem}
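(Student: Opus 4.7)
The plan is to reduce the inequality to a one-dimensional statement on the needles produced by the localization method, exactly as in the proof of Theorem \ref{T:spectralgap}. Since $(X,\sfd)$ has finite diameter $D$, one has $\mm(X)<\infty$, and since \eqref{eq:logsobolev} is invariant under rescaling of $\mm$, I may assume $\mm(X)=1$. The auxiliary function used to localize is
$$\tilde f(x) := f(x) - 1,$$
which satisfies $\int_X \tilde f \, \mm = 0$; boundedness of $X$ and of the Lipschitz function $f$ guarantee the integrability $\int_X |\tilde f(x)| \, \sfd(x,x_0) \, \mm(dx) < \infty$ for any fixed $x_0 \in X$.

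Applying Theorem \ref{T:localize} to $\tilde f$ yields a decomposition $X = Z \sqcup \mathcal T$ together with a disintegration
$$\mm\llcorner_{\mathcal T} = \int_Q \mm_q \, \qq(dq), \qquad \mm_q = g(q,\cdot)_{\sharp} (h_q \cdot \mathcal L^1),$$
where $g(q,\cdot)$ is an isometric parametrization of the geodesic $X_q$ of length at most $D$, each density $h_q$ satisfies \eqref{E:curvdensmm} together with the regularity \eqref{E:regularityh}, and for $\qq$-a.e.\ $q \in Q$ one has $\int_{X_q} \tilde f \, \mm_q = 0$, equivalently $\int_{X_q} f \, \mm_q = 1$; moreover $\tilde f = 0$ $\mm$-a.e.\ on $Z$, so $f \equiv 1$ there. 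Up to a translation of the parameter, the probability measure $\mu_q := h_q \cdot \mathcal L^1$ is supported in $[0,D]$ and therefore belongs to the class ${\mathcal F}^s_{K,N,D}$.

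Set $f_q(t) := f(g(q,t))$. This is a non-negative Lipschitz function with $\int f_q \, \mu_q = 1$, and since $g(q,\cdot)$ is an isometry the standard slope comparison $|f_q'(t)| \leq |\nabla f|(g(q,t))$ holds for a.e.\ $t$. By the very definition of $\alpha^{LS}_{K,N,D}$ in \eqref{eq:logsobolevKND}, applied to $(\mu_q, f_q) \in {\mathcal F}^s_{K,N,D} \times \Lip(\R)$,
$$2\, \alpha^{LS}_{K,N,D} \int_{X_q} f \log f \, \mm_q = 2\, \alpha^{LS}_{K,N,D} \int_\R f_q \log f_q \, \mu_q \leq \int_{\{f_q > 0\}} \frac{|f_q'|^2}{f_q} \, \mu_q \leq \int_{X_q \cap \{f > 0\}} \frac{|\nabla f|^2}{f} \, \mm_q,$$
for $\qq$-a.e.\ $q \in Q$.

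Integrating this inequality against $\qq$, and observing that on $Z$ one has $f \equiv 1$ so that $f \log f = 0$ $\mm$-a.e., yields
$$\int_X f \log f \, \mm = \int_Q \int_{X_q} f \log f \, \mm_q \, \qq(dq) \leq \frac{1}{2\, \alpha^{LS}_{K,N,D}} \int_Q \int_{X_q \cap \{f > 0\}} \frac{|\nabla f|^2}{f} \, \mm_q \, \qq(dq) \leq \frac{1}{2\, \alpha^{LS}_{K,N,D}} \int_{\{f > 0\}} \frac{|\nabla f|^2}{f} \, \mm,$$
where the last step uses the non-negativity of the integrand to control the missing contribution over $Z$. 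This proves $\alpha^{LS}_{(X,\sfd,\mm)} \geq \alpha^{LS}_{K,N,D}$. The only non-tautological point is verifying that each needle density $h_q$ actually lies in ${\mathcal F}^s_{K,N,D}$ — in particular the continuity (for $N>1$) or constancy (for $N=1$) requirements — but this is exactly the regularity content of Theorem \ref{T:localize} and \eqref{E:regularityh}, so no new difficulty arises beyond what was already handled in the proofs of Theorems \ref{T:BM} and \ref{T:spectralgap}. The sharpness of the dimensional constant is encoded entirely in the definition of $\alpha^{LS}_{K,N,D}$ over the synthetic class ${\mathcal F}^s_{K,N,D}$, which by Proposition \ref{P:sLa=La}-type reasoning coincides with its smooth counterpart.
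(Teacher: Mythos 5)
Your proof is correct and follows essentially the same route as the paper: localize via Theorem \ref{T:localize} applied to the mean-zero function $f-1$ (the paper uses $1-f$, which yields the same decomposition), apply the one-dimensional Log-Sobolev inequality \eqref{eq:logsobolevKND} on each needle, use $|f_q'| \leq |\nabla f|\circ g(q,\cdot)$, note that $f\log f$ vanishes on the set where $f\equiv 1$, and integrate against $\qq$. The observations about translating the parametrization to land in $[0,D]$, non-negativity of the right-hand integrand over $Z$, and the equivalence of the synthetic and smooth one-dimensional constants are all consistent with what the paper does or invokes implicitly.
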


\begin{proof}
Since $\CD_{loc}(K,N)$ implies that the measure is locally doubling, the finiteness of the diameter implies that $\mm(X)<\infty$.  Observing that the Log-Sobolev inequality  \eqref{eq:logsobolev} is invariant under a multiplication of $\mm$ by a constant, 
we can then assume without loss of generality that $\mm(X)=1$. 
Consider any Lipschitz function with $\int_X f\,  \mm=1$ and apply Theorem \ref{T:localize} to $\hat f: =  1 - f$. 
Hence we can write $X = Y \cup \mathcal{T}$ with 
$$
f (y) = 1, \quad \mm\text{-a.e. }\,  y \in Y, \qquad   \mm\llcorner_{\mathcal{T}} = \int_{Q} \mm_{q}\, \qq(dq), 
$$
with $\mm_{q} = g(q,\cdot) \, \sharp \left( h_{q} \cdot \mathcal{L}^{1}\right)$, the density $h_{q}$ verifies \eqref{E:curvdensmm} for $\qq$-a.e. $q \in Q$ and 
$$
 1 = \int_{X} f(z) \, \mm_{q}(dz) =  \int_{\dom(g(q,\cdot))} f(g(q,t)) \cdot h_{q}(t) \, \mathcal{L}^{1}(dt) 
$$
for $\qq$-a.e. $q \in Q$. Now consider the map $t \mapsto f_{q} (t) : = f(g(q,t))$ and note that it is Lipschitz. 
Since ${\diam} ({\dom}(g(q,\cdot)) ) \leq D$, from the definition of $\mathcal{F}^{s}_{K,N,D}$ and of $\alpha^{LS}_{K,N,D}$ we deduce that 
$$
2\alpha^{LS}_{K,N,D} \int_{\R}  f_{q}(t) \log(f_{q}(t)) \,  h_{q}(t) \, \mathcal{L}^{1}(dt) \leq \int_{\{f_q(\cdot)>0\}} \frac{|f'_{q}(t)|^2}{f_{q}(t)} \, h_{q}(t) \, \mathcal{L}^{1}(dt).
$$
Noticing that $|f'_{q}(t)| \leq |\nabla f|(g(q,t))$ and that $f \log f$ vanishes over $Y$, one obtains that 
\begin{align*}
2 \alpha^{LS}_{K,N,D} \int_{X}  f \log f \, \mm(dx)  &~ = 2 \alpha^{LS}_{K,N,D} \int_{\mathcal{T}}  f \log f \, \mm(dx) \crcr
		&~ 	= 	2 \alpha^{LS}_{K,N,D} \int_{Q} \left( \int_{X}   f \log f    \, \mm_{q} (dx) \right)\, \qq(dq) \crcr
		&~ 	=  	2 \alpha^{LS}_{K,N,D}   \int_{Q} \left( \int_{\dom(g(q,\cdot))}    f_{q}(t) \log(f_{q}(t))  \,h_{q}(t) \, \mathcal{L}^{1}(dt) \right)\, \qq(dq) \crcr
		&~ 	\leq  	\int_{Q} \left( \int_{\dom(g(q,\cdot)) \cap \{f_q(\cdot)>0\}}  \frac{|f'_{q}(t)|^2}{f_{q}(t)}  \,h_{q}(t) \, \mathcal{L}^{1}(dt) \right)\, \qq(dq) \crcr
		&~ 	\leq  	\int_{Q} \left( \int_{\{f>0\}}  \frac{|\nabla f|^2}{f} \; (g(q,\cdot))\, \sharp \left(h_{q}(t) \, \mathcal{L}^{1}\right)(dx) \right)\, \qq(dq) \crcr
		&~ 	\leq  	\int_{\{f>0\}}  \frac{|\nabla f|^2}{f} \, \mm(dx),
\end{align*}
and the claim follows.
\end{proof}

If $K>0$, by Bonnet-Myers diameter bound, we know that if $(X,\sfd,\mm)$ satisfies $\CD_{loc}(K,N)$ then $\diam(X)\leq \pi \sqrt{\frac{N-1}{K}}$. 
Recalling definition \eqref{eq:logsobolevKND}, we then set $\alpha^{LS}_{K,N}:=\alpha^{LS}_{K,N, \pi \sqrt{\frac{N-1}{K}}}$
for the Log-Sobolev constant without an upper diameter bound. 
By applying the regularization of the measures $h\, {\mathcal L}^1 \in {\mathcal F}^s_{K,N, \pi \sqrt{\frac{N-1}{K}}}$ 
discussed in Lemma \ref{lem:approxh} and arguing analogously to the proof of Proposition \ref{P:sLa=La}, 
we get that in the definition of $\alpha^{LS}_{K,N}$ it is equivalent to take the $\inf$ among measures 
in ${\mathcal F}_{K,N, \pi \sqrt{\frac{N-1}{K}}}$,  defined in \eqref{eq:DiffIne}.
But now  if $\mu\in {\mathcal F}_{K,N, \pi \sqrt{\frac{N-1}{K}}}$ is a probability measure on  $\R$ with smooth density satisfying the $\CD_{loc}(K,N)$ 
condition for $K>0$, it is known that the sharp Log-Sobolev constant is $\alpha^{LS}_{K,N}=\frac{K N}{N-1}$ (see for instance  \cite[Proposition 6.6]{Bakry}). 
More precisely, as proved by Mueller-Weissler \cite{MueWei}, for  every $K>0$ and $N\geq 1$, the sharp constant is attained by  
the usual model  probability measure on the interval $\big[0, \sqrt{\frac{N-1}{K}} \pi \big]$ 
proportional to $\sin\big(\sqrt{ \frac{K}{N-1}} t \big)^{N-1}$; notice that for $N \in \N$ it corresponds 
to the round sphere of radius $\sqrt{\frac{N-1}{K}}$. We then have the following corollary.

\begin{corollary}[Sharp Log-Sobolev under $\CD^{*}(K,N), K>0, N> 1$]\label{cor:LSKN}
Let $(X,\sfd,\mm)$ be a metric measure space satisfying $\CD^{*}(K,N)$ for some $K>0, N> 1$,  and assume moreover it is essentially non-branching.   
Then  for any Lipschitz function $f:X\to [0,\infty)$ with $\int_X f\,  \mm=1$ it holds
$$
\frac{2KN }{N-1} \int_X f \log f \, \mm \leq \int_{\{f>0\}} \frac{|\nabla f|^2}{f} \mm.
$$
In other terms it holds $\alpha^{LS}_{(X,\sfd,\mm)}\geq \frac{K N}{N-1}$.
\end{corollary}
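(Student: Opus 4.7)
The plan is to reduce the statement to Theorem \ref{thm:compLS} together with the classical sharp one-dimensional Log-Sobolev constant for the model density, exploiting the Bonnet--Myers diameter bound available under $\CD^*(K,N)$ with $K>0$.

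First, since $(X,\sfd,\mm)$ is essentially non-branching and satisfies $\CD^*(K,N)$ with $K>0$ and $N>1$, the Bonnet--Myers type estimate gives $D:=\diam(X)\leq \pi\sqrt{(N-1)/K}$, hence in particular $D<\infty$. By Remark \ref{rk:CDCDs}, $\CD^*(K,N)$ is equivalent to $\CD_{loc}(K,N)$ in this range of parameters, so the hypotheses of Theorem \ref{thm:compLS} are met and we obtain $\alpha^{LS}_{(X,\sfd,\mm)}\geq \alpha^{LS}_{K,N,D}$.

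Next, I would show the monotonicity $\alpha^{LS}_{K,N,D}\geq \alpha^{LS}_{K,N,\pi\sqrt{(N-1)/K}}=:\alpha^{LS}_{K,N}$, which is immediate from the definition \eqref{eq:logsobolevKND}: the class $\mathcal{F}^{s}_{K,N,D}$ grows with $D$, so the infimum over that class can only decrease. Therefore it suffices to prove $\alpha^{LS}_{K,N}\geq \frac{KN}{N-1}$.

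For this last step, I would argue exactly in parallel with Proposition \ref{P:sLa=La}: given any $\mu=h\cdot\mathcal{L}^{1}\in \mathcal{F}^{s}_{K,N,\pi\sqrt{(N-1)/K}}$, apply the mollification of Lemma \ref{lem:approxh} to $h$ to obtain smooth densities $h_\ve$ still satisfying the differential inequality \eqref{eq:DiffIne} (up to a small enlargement of the support, which is harmless), so that $\mu_\ve:=h_\ve\cdot\mathcal{L}^1$ belong to the smooth class $\mathcal{F}_{K,N,\pi\sqrt{(N-1)/K}+2/k}$. For a fixed Lipschitz $f\geq 0$ with $\int f\,\mu=1$, one renormalizes to $f_\ve$ with $\int f_\ve\,\mu_\ve=1$ and passes to the limit in both sides of the one-dimensional Log-Sobolev inequality using uniform convergence of $h_\ve$ to $h$. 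The reduction matches $\alpha^{LS}_{K,N}$ with the infimum of the sharp Log-Sobolev constants among smooth one-dimensional $\CD(K,N)$ probability measures supported on intervals of length at most $\pi\sqrt{(N-1)/K}$. For such smooth densities, the classical result of Mueller--Weissler \cite{MueWei} (see also the exposition \cite[Proposition~6.6]{Bakry}) gives the sharp lower bound $\frac{KN}{N-1}$, attained on the model density proportional to $\sin(\sqrt{K/(N-1)}\,t)^{N-1}$ on $[0,\pi\sqrt{(N-1)/K}]$ which itself belongs to the class.

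The only nontrivial technical point is the passage from the synthetic class $\mathcal{F}^{s}_{K,N,D}$ to the smooth class $\mathcal{F}_{K,N,D'}$: care is needed in the small diameter enlargement and in the renormalization of $f$, but these are handled exactly as in the proof of Proposition \ref{P:sLa=La} (using the continuity of the relevant functionals in $L^1$ with respect to the density). Chaining the three inequalities $\alpha^{LS}_{(X,\sfd,\mm)}\geq \alpha^{LS}_{K,N,D}\geq \alpha^{LS}_{K,N}\geq \frac{KN}{N-1}$ yields the corollary.
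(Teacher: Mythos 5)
Your proposal is correct and follows essentially the same route as the paper: Bonnet--Myers to bound the diameter, Theorem \ref{thm:compLS} to reduce to the one-dimensional model constant $\alpha^{LS}_{K,N,D}$, monotonicity in $D$ to pass to $\alpha^{LS}_{K,N}=\alpha^{LS}_{K,N,\pi\sqrt{(N-1)/K}}$, regularization via Lemma \ref{lem:approxh} (as in Proposition \ref{P:sLa=La}) to replace $\mathcal{F}^s$ by $\mathcal{F}$, and finally the classical sharp constant of Mueller--Weissler for smooth one-dimensional $\CD(K,N)$ densities. The only cosmetic difference is that you make the chain $\alpha^{LS}_{(X,\sfd,\mm)}\geq\alpha^{LS}_{K,N,D}\geq\alpha^{LS}_{K,N}\geq\frac{KN}{N-1}$ fully explicit, whereas the paper compresses the middle step into the definition of $\alpha^{LS}_{K,N}$.
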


Let us mention that, since the reduction to a 1-D problem is done via an $L^1$-optimal transportation argument, 
Corollary \ref{cor:LSKN} can be seen as  a solution to  \cite[Open Problem 21.6]{Vil}.

\subsection{From Sharp Log-Sobolev to Sharp  Talagrand}
First of all let us recall that the relative entropy functional $Ent_\mm: {\mathcal P}(X)\to [0, +\infty]$ with respect to a given $\mm\in {\mathcal P}(X)$ is defined to be 
$$
Ent_\mm(\mu)= \int_X \varrho\, \log \varrho \, \mm, \quad \text{ if } \mu=\varrho \mm 
 $$
and $+\infty$ otherwise.

Otto-Villani \cite{OtVil} proved that for smooth Riemannian manifolds the Log-Sobolev inequality with constant $\alpha>0$ 
implies the Talagrand inequality with constant $\frac{2}{\alpha}$ preserving sharpness. 
The result was then generalized to arbitrary metric measure spaces by Gigli-Ledoux \cite{GL}, so that we can state:

\begin{theorem}[From  Log-Sobolev to  Talagrand, \cite{OtVil, GL}]\label{thm:LST}
Let $(X,\sfd,\mm)$ be a metric measure space supporting the Log-Sobolev inequality with constant $\alpha>0$. 
Then it also supports the Talagrand inequality with constant $\frac{2}{\alpha}$, i.e. it holds
$$
W_2^2(\mu, \mm) \leq \frac{2}{\alpha} Ent_\mm(\mu)
$$
for all $\mu \in \mathcal{P}(X)$.
\end{theorem}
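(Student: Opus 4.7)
The strategy is the Hopf-Lax / Hamilton-Jacobi flow method of Otto-Villani, extended to m.m.s.\ by Gigli-Ledoux. The entry point is Kantorovich duality for the quadratic cost:
$$\tfrac{1}{2}W_2^2(\mu,\mm) = \sup_{\phi} \left\{ \int Q_1\phi\, d\mu - \int \phi\, d\mm \right\},$$
where $\phi$ ranges over bounded Lipschitz functions on $X$ and $Q_t\phi(x):=\inf_{y\in X}\{\phi(y)+\sfd^2(x,y)/(2t)\}$ is the Hopf-Lax semigroup. It therefore suffices to produce, for each such $\phi$, the dual bound
$$\int Q_1\phi\, d\mu - \int \phi\, d\mm \leq \frac{1}{\alpha}\,Ent_\mm(\mu).$$

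Fix such a $\phi$ and introduce the auxiliary $F(t):=\int e^{\alpha t\, Q_t\phi}\, d\mm$ on $(0,1]$. Two ingredients will be married. First, the Hamilton-Jacobi subsolution inequality $\partial_t Q_t\phi+\tfrac{1}{2}|\nabla Q_t\phi|^2\leq 0$, valid a.e.\ in $t$ on any complete separable metric space (here $|\nabla Q_t\phi|$ denotes the local Lipschitz slope). Second, the Log-Sobolev inequality of the hypothesis, applied to $u:=e^{\alpha t\, Q_t\phi}$, which is legitimate since $Q_t\phi$ is bounded and Lipschitz for each $t>0$. Differentiating $F$ and using the Hamilton-Jacobi inequality gives
$$F'(t)\leq \int \Bigl[\alpha Q_t\phi-\tfrac{\alpha t}{2}|\nabla Q_t\phi|^2\Bigr] u\, d\mm,$$
while the Log-Sobolev inequality applied to $u$ (after renormalising by $F(t)$) reads
$$2\alpha^2 t \int Q_t\phi\cdot u\, d\mm - 2\alpha F(t)\log F(t) \leq \alpha^2 t^2 \int |\nabla Q_t\phi|^2\, u\, d\mm.$$
Eliminating the gradient term between these two bounds yields the differential inequality $F'(t)/F(t)\leq t^{-1}\log F(t)$, equivalently, the monotonicity $\bigl(t^{-1}\log F(t)\bigr)'\leq 0$.

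This monotonicity, combined with the boundary behaviour $Q_t\phi\to\phi$ uniformly as $t\downarrow 0$ (so that $F(t)\to 1$ and $t^{-1}\log F(t)\to \alpha\int \phi\, d\mm$ by de l'H\^opital), yields
$$\log\int e^{\alpha Q_1\phi}\, d\mm \leq \alpha\int \phi\, d\mm.$$
This is the Bobkov-G\"otze dual characterization of $T_2$; coupling it with the Donsker-Varadhan variational formula
$$Ent_\mm(\mu) \geq \int \psi\, d\mu - \log\int e^{\psi}\, d\mm,\qquad \psi=\alpha Q_1\phi,$$
delivers exactly the dual bound demanded by the first paragraph. Taking the supremum over $\phi$ and invoking Kantorovich duality completes the proof.

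The genuine difficulty, which is the content of the Gigli-Ledoux extension of Otto-Villani to the nonsmooth setting, lies precisely in the Hamilton-Jacobi inequality $\partial_t Q_t\phi+\tfrac{1}{2}|\nabla Q_t\phi|^2\leq 0$: in the metric setting $|\nabla Q_t\phi|$ must be interpreted as the local Lipschitz slope, and the relation only holds a.e.\ in $t$. Its validity on general complete separable metric measure spaces, due to Ambrosio-Gigli-Savar\'e and collaborators, is what allows the argument to be run under the essentially non-branching $\CD^*(K,N)$ hypothesis without any further smoothness. A routine truncation/regularization reduces $\mu$ with arbitrary finite entropy to bounded Lipschitz densities and justifies differentiating $F(t)$ under the integral sign, making the computation above rigorous.
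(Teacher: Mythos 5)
The paper does not supply a proof of this theorem; it simply cites \cite{OtVil, GL}, and your argument reproduces, correctly and in detail, the Gigli--Ledoux ``quick proof'' that is being cited. The Hopf--Lax/Bobkov--G\"otze dualization, the auxiliary function $F(t)=\int e^{\alpha t Q_t\phi}\,d\mm$, the Hamilton--Jacobi subsolution inequality, the elimination yielding monotonicity of $t^{-1}\log F(t)$, and the conclusion via Donsker--Varadhan are all exactly the steps in that reference, so this is the same approach and the proof is correct.
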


Combining Theorem \ref{thm:compLS} with Theorem \ref{thm:LST} we get Theorem \ref{thm:talagrand} which improves the 
Talagrand constant $2/K$, which is sharp for $\CD(K,\infty)$ spaces, by a factor $N-1/N$ in case the dimension is bounded above by $N$. 
This constant is sharp for $\CD_{loc}(K,N)$ spaces, indeed it is sharp already in the smooth setting \cite[Remark 22.43]{Vil}.  
Since both our proof of the sharp Log-Sobolev inequality and the proof of Theorem \ref{thm:LST}  
are essentially optimal transport based,  this be seen as an answer to \cite[Open Problem 22.44]{Vil}.

\begin{remark}[Sharpness and estimates of the best constants]
Recall that for weighted smooth manifolds,  the Log-Sobolev inequality implies the Talagrand inequality which in turns implies  
the Poincar\'e inequality every step without any loss in the constants \cite[Theorem 22.17]{Vil}.  
Since when we compute  the comparison Log-Sobolev constant $\alpha^{LS}_{K,N,D}$ and  
the comparison first eigenvalue $\lambda^{1,2}_{K,N,D}$, we work with the smooth measures ${\mathcal F}_{K,N,D}$ on $\R$, we always have the estimate 
\begin{equation}\label{eq:LSPoi}
\alpha^{LS}_{K,N,D} \leq  \lambda^{1,2}_{K,N,D}.
\end{equation}
Notice that, for $K>0$ and $D=\sqrt{ \frac{N-1}{K}} \pi$ they actually coincide:
\begin{equation}
\frac{KN}{N-1} = \alpha^{LS}_{K,N,\sqrt{ \frac{N-1}{K}} \pi} = \lambda^{1,2}_{K,N,\sqrt{ \frac{N-1}{K}} \pi}.  
\end{equation}
An interesting question we do not address here is if this is always the case, 
i.e. if  in \eqref{eq:LSPoi} equality holds for every $K\in \R$, $N \geq 1$, $D\in (0,\infty)$. 
Since the value of  $\lambda^{1,2}_{K,N,D}$ is known in many cases, it would have as a consequence 
the determination of the explicit value of the best constant in both the Log-Sobolev and the Talagrand inequalities in the curvature-dimension-diameter forms. 
This would also imply  rigidity and almost-rigidity statements attached to the Log-Sobolev and Talagrand inequalities, as proven here for the Poincar\'e inequality. 
Let us note that for the almost rigidity to hold for both the Log-Sobolev and Talagrand inequalities it would be enough 
to prove that for every $\varepsilon>0$ there exists $\delta>0$ such that 
$\alpha^{LS}_{K,N,D}\geq  \alpha^{LS}_{K,N, \sqrt{\frac{N-1}{K}} \pi} + \delta=\frac{KN}{N-1} +\delta$,  if $D\in \big[0, \sqrt{\frac{N-1}{K}} \ve- \delta \big]$.
\end{remark}


\section{Sharp Sobolev Inequalities}

Recall that $(X,\sfd,\mm)$ supports a $(p,q)$-Sobolev inequality with constant $\alpha^{p,q}$ if for any  Lipschitz function  $f : X \to \R$ it holds
\begin{equation}\label{E:Sobolev}
\frac{\alpha^{p,q}}{p-q} \left\{ \left( \int_{X}  |f |^{p} \, \mm \right)^{\frac{q}{p}} -   \int_{X}  |f |^{q} \, \mm \right\} \leq   \int_{X}  |\nabla f |^{q} \, \mm.
\end{equation}
The largest constant $\alpha^{p,q}$ such that \eqref{E:Sobolev} holds for any Lipschitz function $f$ will be called the $(p,q)$-Sobolev constant 
of $(X,\sfd, \mm)$ and will be denoted by $\alpha^{p,q}_{(X,\sfd,\mm)}$. 

Again we consider the one-dimensional case and given $K \in \R, N\geq 1$ and $D \in (0,\infty]$ we define 
$^s\alpha^{p,q}_{K,N,D}$ to be the maximal constant $\alpha$
such that 
$$
\frac{\alpha}{p-q} \left\{ \left( \int_{X}  |f |^{p} \, \mu \right)^{\frac{q}{p}} -   \int_{X}  |f |^{q} \, \mu \right\} \leq   \int_{X}  |\nabla f |^{q} \, \mu, 
\qquad \forall\ \mu \in \mathcal{F}^{s}_{K,N,D},
$$
for every Lipschitz function $f:\R \to \R$. Restricting the maximization to $\mu \in \mathcal{F}_{K,N,D}$, we obtain the constant 
$\alpha^{p,q}_{K,N,D}$. Using the approximation Lemma \ref{lem:approxh} and reasoning as in Proposition \ref{P:sLa=La} one obtains that 
$$
^{s}\alpha^{p,q}_{K,N,D} = \alpha^{p,q}_{K,N,D}. 
$$

\begin{theorem}\label{thm:compL}
Let $(X,\sfd,\mm)$ be a metric measure space with  diameter $D \in (0,\infty)$ and satisfying $\CD_{loc}(K,N)$ for some $K\in \R, N\in [1,\infty)$.  
Assume moreover it is essentially non-branching. 
Then  for any Lipschitz function it holds
$$
\frac{\alpha^{p,q}_{K,N,D}}{p-q} \left\{ \left( \int_{X}  |f (x)|^{p} \, \mm(dx) \right)^{\frac{q}{p}} - \int_{X}  |f (x)|^{q} \, \mm(dx)   \right\} 
\leq    \int_{X}  |\nabla f (x)|^{q} \, \mm(dx), 
$$
In other terms, it holds $\alpha^{p,q}_{(X,\sfd,\mm)} \geq \alpha^{p,q}_{K,N,D}$.
\end{theorem}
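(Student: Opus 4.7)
The plan is to mimic the proofs of Theorems \ref{T:spectralgap} and \ref{thm:compLS} (the $p$-spectral gap and the sharp Log-Sobolev inequality) by applying the $L^1$ localization Theorem \ref{T:localize} to a cleverly chosen mean-zero function. First I would reduce to $\mm(X)=1$ by rescaling --- since the Sobolev inequality is invariant under multiplication of $\mm$ by a positive constant --- and assume $f\not\equiv 0$ (otherwise trivial). Setting $c:=\int_X |f|^p\,\mm>0$, the key idea is to apply Theorem~\ref{T:localize} to the function
\[
\hat f := c - |f|^p,
\]
which is $\mm$-integrable with $\int \hat f\,\mm = 0$ and is bounded since $f$ is Lipschitz on a bounded space.

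This yields a decomposition $X = Y \cup \mathcal{T}$ together with a disintegration $\mm\llcorner_{\mathcal T}=\int_Q \mm_q\,\qq(dq)$, where $\mm_q = g(q,\cdot)\sharp(h_q\mathcal{L}^1)$ and each $h_q$ satisfies the $\CD(K,N)$ density condition \eqref{E:curvdensmm}. By construction: (i) $\hat f=0$ $\mm$-a.e.\ on $Y$, so $|f|^p=c$ and $|f|^q=c^{q/p}$ $\mm$-a.e.\ on $Y$; (ii) for $\qq$-a.e. $q$ one has $\int\hat f\,\mm_q=0$, equivalently $\int |f|^p\,\mm_q=c$. For such $q$, setting $f_q(t):=f(g(q,t))$ and noting $h_q\mathcal{L}^1\in \mathcal{F}^s_{K,N,D}$ with ${}^s\alpha^{p,q}_{K,N,D}=\alpha^{p,q}_{K,N,D}$, the one-dimensional Sobolev inequality gives
\[
\frac{\alpha^{p,q}_{K,N,D}}{p-q}\left\{c^{q/p}-\int |f_q|^q h_q\,dt\right\}\leq \int |f_q'|^q h_q\,dt.
\]

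Integrating this against $\qq$, using $|f_q'(t)|\leq |\nabla f|(g(q,t))$ and the disintegration identities for $\int_{\mathcal T}|f|^q\,\mm$ and $\int_{\mathcal T}|\nabla f|^q\,\mm$, and finally adding the contributions from $Y$ via (i), one checks that the $c^{q/p}\mm(Y)$ terms cancel exactly on the LHS (because $\qq(Q)=\mm(\mathcal T)=1-\mm(Y)$), yielding
\[
\frac{\alpha^{p,q}_{K,N,D}}{p-q}\left\{c^{q/p}-\int_X |f|^q\,\mm\right\}\leq \int_X |\nabla f|^q\,\mm,
\]
which is exactly the desired inequality after recalling $c^{q/p}=(\int |f|^p\,\mm)^{q/p}$.

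The main subtlety --- and the whole reason for choosing $\hat f = c-|f|^p$ rather than, say, $f-\bar f$ --- is that a localization based on an arbitrary mean-zero test function would force us to compare $\left(\int|f|^p\,\mm\right)^{q/p}$ with $\int_Q\!\left(\int|f|^p\,\mm_q\right)^{q/p}\qq(dq)$, and Jensen's inequality for the concave map $x\mapsto x^{q/p}$ (valid for $p>q$) points in the \emph{wrong} direction to chain the estimates. Forcing $\int|f|^p\,\mm_q$ to be constant in $q$ bypasses Jensen entirely: the factor $c^{q/p}$ pulls out of the $\qq$-integral for free. This is the only genuinely new step; once it is in place the rest is a verbatim adaptation of the proofs of Theorems~\ref{T:spectralgap} and \ref{thm:compLS}.
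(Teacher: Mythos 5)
Your proposal is correct and follows essentially the same route as the paper: apply Theorem \ref{T:localize} to $\hat f$ proportional to $\int_X |f|^p\,\mm - |f|^p$ (the paper uses $1 - |f|^p/\int_X|f|^p\,\mm$, a positive multiple of your choice, hence the same decomposition), so that $\int_X |f|^p\,\mm_q$ is constant in $q$, then apply the one-dimensional Sobolev inequality on each ray and handle the set $Y$ where $|f|^p$ equals its mean. The only cosmetic difference is that the paper normalizes by $(\int_X|f|^p\,\mm)^{q/p}$ and treats the cases $p>q$ and $p<q$ separately, whereas your exact cancellation of the $c^{q/p}\mm(Y)$ terms handles both signs of $p-q$ at once.
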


\begin{proof}
First of all note that  $\CD_{loc}(K,N)$  coupled with the finiteness of the diameter implies  $\mm(X)<\infty$.

{\bf Step 1: The case $p> q$.} 

\noindent
With a slight abuse of notation $q$ will denote both the exponent in the Sobolev embedding and the index in the disintegration, there  should be no confusion since the clearly different roles.
Fix any Lipschitz function $f$ and consider the function $\hat f (x): = 1 - c |f(x)|^{p}$, with $c : = 1/ (\int |f|^{p} \mm)$. Therefore $\int \hat f \, \mm =0$ and we can invoke 
Theorem \ref{T:localize}.  Hence $X = Y \cup \mathcal{T}$ with 
$$
\hat f (y) = 0, \quad \mm\text{-a.e. }\,  y \in Y, \qquad   \mm\llcorner_{\mathcal{T}} = \int_{Q} \mm_{q}\, \qq(dq), 
$$
with $\mm_{q} = g(q,\cdot) \, \sharp \left( h_{q} \cdot \mathcal{L}^{1}\right)$, the density $h_{q}$ verifies \eqref{E:curvdensmm} for $\qq$-a.e. $q \in Q$ and 
$$
0 = \int_{X} \hat f(z) \, \mm_{q}(dz) =  \int_{\dom(g(q,\cdot))} \hat f(g(q,t)) \cdot h_{q}(t) \, \mathcal{L}^{1}(dt) 
$$
for $\qq$-a.e. $q \in Q$. 

Now consider the map $t \mapsto f_{q} (t) : = f(g(q,t))$ and note that it is Lipschitz. 
Since ${\diam} ({\dom}(g(q,\cdot)) ) \leq D$, from the definition of $\mathcal{F}^{s}_{K,N,D}$ and of $\alpha^{p,q}_{K,N,D}$ we deduce that 
$$
\left( \int_{\R}  |f_{q} (t)|^{p} h_{q}(t) \, \mathcal{L}^{1}(dt) \right)^{\frac{q}{p}} \leq  \int_{\R}  | f_{q} (t)|^{q}h_{q}(t) \, \mathcal{L}^{1}(dt)  
+ \frac{p-q}{\alpha^{p,q}_{K,N,D}} \int_{\R}  | f' (t)|^{q} h_{q}(t) \, \mathcal{L}^{1}(dt).
$$
Since for $\qq$-a.e. $q\in Q$ it holds $\int \hat f \, \mm_{q} = 0$, it follows that
$$
\int_{X} |f(x)|^{p}\,\mm_{q}(dx) = \frac{1}{c} = \int_{X} |f(x)|^{p}\,\mm(dx).
$$
Therefore the previous inequality reads as 
$$
1 \leq \left( \frac{1}{\int |f(x)|^{p}\,\mm(dx)} \right)^{\frac{q}{p}} \left( \int_{X}  | f_{q}|^{q} \, \mm_{q}
+ \frac{p-q}{\alpha^{p,q}_{K,N,D}} \int_{X}  | f' |^{q}  \, \mm_{q} \right).
$$
Noticing that $|f'_{q}(t)| \leq |\nabla f|(g(q,t))$, integrating over $Q$ one obtains that 
\begin{equation}\label{eq:mTSob}
\mm(\mathcal{T}) \leq  \left( \frac{1}{\int |f(x)|^{p}\,\mm(dx)} \right)^{\frac{q}{p}} \, \int_{\mathcal{T}}  | f (x)|^{q} \, \mm(dx)
+ \frac{p-q}{\alpha^{p,q}_{K,N,D}} \int_{\mathcal{T}}  | \nabla f (x)|^{q}  \, \mm(dx).
\end{equation}
To complete the argument one should prove that for each $y \in Y$
$$
1 \leq \left( \frac{1}{\int |f|^{p}\,\mm} \right)^{\frac{q}{p}} \left(  | f(y)|^{q} 
+ \frac{p-q}{\alpha^{p,q}_{K,N,D}}   | \nabla f (y)|^{q}  \right).
$$
As for $\mm$-a.e. $y\in Y$ one has $|f(y)|^{p} =  \int_{X} |f|^{p}\,\mm$, this last inequality holds trivially.  Integrating this last inequality over $Y$ and adding it to \eqref{eq:mTSob}, we obtain  the claim.
\medskip

{\bf Step 2: The case $p <q$.} It follows repeating the previous localization argument and writing the Sobolev inequality in the following form
$$
\left( \int_{X}  |f (x)|^{p}  \, \mm(dx) \right)^{\frac{q}{p}} \geq  \int_{X}  | f (x)|^{q} \, \mm(dx)  
- \frac{q-p}{\alpha } \int_{X}  | \nabla f (x)|^{q}  \, \mm(dx).
$$
\end{proof}

As already observed, if $K > 0$ then $\diam (X) \leq \pi \sqrt{(N-1)/K}$ and therefore one can define 
$$
\alpha^{p,q}_{K,N} : = \alpha^{p,q}_{K,N, \pi \sqrt{(N-1)/K}}, 
$$
the $(p,q)$-Sobolev inequality with no diameter upper bound. 
If $\mu\in {\mathcal F}_{K,N, \pi \sqrt{(N-1)/K}}$ with $K>0$, 
it is known that the sharp $(p,2)$-Sobolev constant, verifies (see for instance  \cite[Theorem 3.1]{Led-Toul})
$$
\alpha^{p,2}_{K,N} \geq \frac{K N}{ N-1 }, \qquad \textrm{for }\ 1\leq p\leq \frac{2N}{N -2}.
$$
Moreover, for $N \in \N$, it is attained on the round sphere of radius $\sqrt{\frac{N-1}{K}}$. We then have the following corollary.

\begin{corollary}\label{cor:pqSKN}
Let $(X,\sfd,\mm)$ be a metric measure space satisfying $\CD^{*}(K,N)$ for some $K>0, N\in (2,\infty)$,  and assume moreover it is essentially non-branching.   
Then  for any Lipschitz function $f$ it holds
$$
\frac{KN }{(p-2)(N-1)}  \left\{ \left( \int_{X}  |f |^{p} \, \mm \right)^{\frac{2}{p}} -   \int_{X}  |f |^{2} \, \mm \right\} \leq   \int_{X}  |\nabla f |^{2} \, \mm, 
$$
for any $2 < p \leq 2N/(N -2)$. In other terms it holds $\alpha^{p,2}_{(X,\sfd,\mm)}\geq \frac{K N}{N-1}$.
\end{corollary}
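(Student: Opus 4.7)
The plan is to deduce the corollary as a direct consequence of Theorem \ref{thm:compL} combined with the known sharp one-dimensional Sobolev estimate quoted from \cite{Led-Toul}. In essence, everything has already been set up: Theorem \ref{thm:compL} reduces the Sobolev constant of $(X,\sfd,\mm)$ to the one-dimensional model constant $\alpha^{p,q}_{K,N,D}$, so only the explicit evaluation of this model constant in the case $K>0$, $q=2$, and $D$ equal to (or bounded by) $\pi\sqrt{(N-1)/K}$ remains.

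First I would invoke Bonnet--Myers for $\CD^*(K,N)$ with $K>0$, $N>1$, which yields $D:=\diam(X)\leq \pi\sqrt{(N-1)/K}$, so that in particular $D\in(0,\infty)$ and the hypotheses of Theorem \ref{thm:compL} are satisfied. Applying that theorem with $q=2$ then gives, for every Lipschitz $f$,
$$\frac{\alpha^{p,2}_{K,N,D}}{p-2}\left\{\left(\int_X |f|^p\,\mm\right)^{2/p}-\int_X |f|^2\,\mm\right\}\leq \int_X |\nabla f|^2\,\mm,$$
and the task reduces to showing $\alpha^{p,2}_{K,N,D}\geq KN/(N-1)$ for any admissible $D$ and any $2<p\leq 2N/(N-2)$.

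Next I would pass from the synthetic class $\mathcal{F}^s_{K,N,D}$ to the smooth class $\mathcal{F}_{K,N,D}$ via the mollification in Lemma \ref{lem:approxh}: the same argument used in the proof of Proposition \ref{P:sLa=La} shows ${}^s\alpha^{p,2}_{K,N,D}=\alpha^{p,2}_{K,N,D}$, so it suffices to bound $\alpha^{p,2}_{K,N,D}$ from below on the smooth model measures. On the smooth class $\mathcal{F}_{K,N,\pi\sqrt{(N-1)/K}}$ (which contains $\mathcal{F}_{K,N,D}$ for every $D\leq \pi\sqrt{(N-1)/K}$), the cited result \cite[Theorem 3.1]{Led-Toul} gives precisely the sharp lower bound $KN/(N-1)$ on the $(p,2)$-Sobolev constant in the full range $2<p\leq 2N/(N-2)$. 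By monotonicity of the class with respect to $D$, the corresponding lower bound is preserved for any smaller diameter.

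The main (and essentially only) obstacle is the legitimacy of the reduction to the smooth one-dimensional model: one must check that the constant $\alpha^{p,2}_{K,N,D}$ really coincides with ${}^s\alpha^{p,2}_{K,N,D}$ via mollification, so that the classical Sobolev inequality on measures with $C^2$ density satisfying $(h^{1/(N-1)})''+K/(N-1)\,h^{1/(N-1)}\leq 0$ can be applied. Since the mollification argument and the convergence of the Sobolev functionals are entirely analogous to the spectral-gap case already carried out in Proposition \ref{P:sLa=La} (using uniform convergence of the densities and continuity of the functionals in $f$), this step is routine; no further technical ingredient is needed and the conclusion $\alpha^{p,2}_{(X,\sfd,\mm)}\geq KN/(N-1)$ follows.
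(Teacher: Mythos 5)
Your proof is correct and follows essentially the same route the paper intends: Bonnet--Myers to bound the diameter by $\pi\sqrt{(N-1)/K}$, Theorem \ref{thm:compL} with $q=2$ to reduce to the one-dimensional model constant $\alpha^{p,2}_{K,N,D}$, the mollification identity ${}^s\alpha^{p,2}_{K,N,D}=\alpha^{p,2}_{K,N,D}$ obtained as in Proposition \ref{P:sLa=La}, and finally the classical bound $\alpha^{p,2}_{K,N}\geq KN/(N-1)$ from \cite[Theorem 3.1]{Led-Toul}. The only cosmetic difference is that you phrase the last step via monotonicity of $D\mapsto \mathcal{F}_{K,N,D}$ and the resulting monotonicity of $\alpha^{p,2}_{K,N,D}$, whereas the paper simply invokes the cited one-dimensional result directly; both are fine.
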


Corollary \ref{cor:pqSKN} can be seen as  a solution to  \cite[Open Problem 21.11]{Vil}.

\section*{Appendix}

All the inequalities we have presented here rely on the general scheme of applying one-dimensional localization to a big family of inequalities, 
called 4-functions inequalities (see for instance the work of Kannan-Lov\'asz-Simonovits \cite{KaLoSi}). 

The argument goes as follows. Suppose we are interested in proving that for $f_{1},f_{2},f_{3},f_{4}$ integrable functions and $\alpha,\beta>0$ it holds
\begin{equation}\label{Eq:4function}
\left( \int_{X} f_{1} \, \mm \right)^{\alpha} \left( \int_{X} f_{2} \, \mm \right)^{\beta} \leq \left( \int_{X} f_{3} \, \mm \right)^{\alpha} \left( \int_{X} f_{4} \, \mm \right)^{\beta}.
\end{equation}
Then consider the one-dimensional localization induced by $g : = f_{3} - c f_{1}$, with $c = (\int f_{3} \mm)/(\int f_{1} \mm)$:
$$
\mm\llcorner_{\mathcal{T}} = \int_{Q} \mm_{q} \qq(dq),
$$
where $X = \mathcal{T} \cup Y$ and on $Y$ it holds $g(x) =0$ for $\mm$-a.e. $x\in Y$.
Then it is sufficient to prove that
\begin{align*}
\left( \int_{X} f_{1} \, \mm_{q} \right)^{\alpha} \left( \int_{X} f_{2} \, \mm_{q} \right)^{\beta} 
& \leq  \left( \int_{X} f_{3} \, \mm_{q} \right)^{\alpha} \left( \int_{X} f_{4} \, \mm_{q} \right)^{\beta}, \quad \qq-\textrm{a.e.} \ q \in Q \crcr
f_{2} (x)  
& \leq  c^{\alpha/\beta}  f_{4} (x), \qquad \mm-\textrm{a.e.} \ x \in Y.
\end{align*}
Indeed from the localization it follows that $\int g \, \mm_{q} = 0$ for $\qq$-a.e. $q \in Q$ and therefore 
$$
\int_{X} f_{2} (x)  \, \mm_{q}(dx)  \leq  c^{\alpha/\beta}  \int_{X} f_{4} (x) \, \mm_{q}(dq), \qquad \qq-\textrm{a.e.} \ q \in Q. 
$$
Integrating over $Q$ and adding the integral over $Y$, \eqref{Eq:4function} follows.

\end{document}